
\documentclass[11pt]{article}
\usepackage[utf8]{inputenc}
\usepackage{amsfonts, amsmath}
\usepackage{amssymb}
\usepackage{amsthm,amsmath,amscd}
\usepackage{enumerate}
\usepackage{url}
\usepackage{amsbsy}
\usepackage[pdftex]{graphicx}
\usepackage[margin=25pt,font=small,labelfont=bf]{caption}
\usepackage[breaklinks,hyperfootnotes=false,bookmarks=false,colorlinks=true]{hyperref}
\usepackage{subfig}
\usepackage{mdframed}
\usepackage{color}
\usepackage[square,numbers,sort&compress]{natbib}
\setcounter{secnumdepth}{2}
\newcommand*{\doi}[1]{doi: \href{https://dx.doi.org/#1}{\urlstyle{rm}\nolinkurl{#1}}}
\newcommand*{\arxiv}[1]{arXiv:  \href{https://arxiv.org/abs/#1}{\urlstyle{rm}\nolinkurl{#1}}}

\usepackage [autostyle, english = american]{csquotes}
\MakeOuterQuote{"}


\newcommand{\defn}[1]{\textcolor{blue}{\emph{#1}}}

\graphicspath{{figs/}}

\newcommand{\RR}{\mathbb R}

\newcommand{\bna}{\begin{eqnarray}}
\newcommand{\ena}{\end{eqnarray}}
\newcommand{\ba}{\begin{eqnarray*}}
\newcommand{\ea}{\end{eqnarray*}}
\newcommand{\bs}[1]{}

\newtheorem{theorem}{Theorem}[section]
\newtheorem{corollary}[theorem]{Corollary}
\newtheorem{lemma}[theorem]{Lemma}
\newtheorem{proposition}[theorem]{Proposition}
\newtheorem{remark}[theorem]{Remark}
\newtheorem{definition}[theorem]{Definition}

\newcommand{\CC}{{\mathbb C}}

\def\p{{\bf p}}

\def\f{{\bf f}}

\def\v{{\bf v}}

\def\r{{\bf r}}

\def\s{{\bf s}}

\title{Packing Disks by Flipping and Flowing}
\author{Robert Connelly\thanks{Department of Mathematics, Cornell University. \texttt{rc46@cornell.edu}. Partially supported by 
NSF grant DMS-1564493.}
\and Steven J. Gortler\thanks{School of Engineering and Applied Sciences, Harvard University. \texttt{sjg@cs.harvard.edu}. 
Partially supported by NSF grant DMS-1564473.}
}
\date{}

\date{}

\usepackage[margin=1in]{geometry}
\textheight     9in

\textwidth       6.5in

\begin{document}
\maketitle 
\begin{abstract}
We provide a new type of proof for the Koebe-Andreev-Thurston (KAT) 
planar circle packing theorem based on combinatorial edge-flips.
In particular, we show that starting from a disk packing with
a maximal planar contact graph $G$, one can remove any
flippable edge $e^-$ of this graph
and then continuously flow the disks
in the plane, such that at the end of the flow, 
one obtains a new disk packing whose contact graph 
is the graph resulting from flipping the edge $e^-$ in $G$.
This flow is parameterized by a single inversive distance.
\end{abstract}

\section{Introduction}

The well known Koebe–Andreev–Thurston (KAT) 
(planar) circle packing theorem states that
for every 
planar graph $G$ with $n$ vertices,
there is a corresponding packing of $n$ disks 
(with mutually disjoint interiors)
in
the plane, whose
contact graph is isomorphic to $G$. 
Moreover if $G$ is maximal (ie. all faces triangular), then
this packing is
unique up to M{\"o}bius transformations and reflections~\cite{kobe,andreev,thurston}.
This theorem is important in conformal geometry~\cite{rodin} and
has been generalized in numerous directions~\cite{thurston, rivin, schramm}.  In \cite{LFT}, a packing 
whose graph is a triangulation of the plane is called \emph{a compact packing}.

There are a variety of techniques that have been used to establish
the KAT theorem, 
These include methods based on 
conformal geometry~\cite{kobe}, 
combinatorial analysis of hyperbolic polyhedra~\cite{andreev, roeder}
circle geometry, cone-singularities and topology of continuous maps~\cite{thurston,marden}, 
variational methods~\cite{cdv,rivin,bobenko}, iterative "flattening" algorithms~\cite{mohar,ken,kenbook}, 
and combinatorial Ricci Flow~\cite{chow}.

In this paper, we present a novel simple and
constructive approach for proving the 
KAT circle packing theorem based on ideas from (local) rigidity theory~\cite{bbp,sticky}.

We start with any 
"trilaterated" packing;  such a packing is constructed by starting 
with three fixed disks in mutual tangential contact, and then adding in, one-by-one, new disks that are tangential
to three previously placed disks.
It will be easy to establish that 
such a packing is unique up to M{\"o}bius transformation and reflection. (See Figure~\ref{fig:canon}.)

Next, we use the fact that one can always 
combinatorially transform 
any maximal planar graph   to any desired
target maximal planar graph $G$
using a finite sequence of combinatorial 
"edge flips" ~\cite{wagner,bose,sleator}. (See Figure~\ref{fig:flip}.)

Then, at the heart of this paper, we show how we can continuously 
flow the planar packing in coordination with
a single edge flip. More specifically,
let $P(0)$ be a packing with a maximal planar contact graph $G$.
Let $H$ be a maximal planar graph obtained from  $G$ by removing one edge $e^-$ and adding its "cross" edge
$e^+$ (see Figure~\ref{fig:flip}). Also let $G^-$ be the intermediate
graph $G\setminus e^-$ such that $G^-$ has one quadrilateral 
face. Then we can always find a continuous path of planar
packings $P(t)$ for $t \in [0..1]$ with the following properties:
\begin{itemize}
    \item The contact graph of $P(0)$ is $G$.
    \item For $t$ in the interval $(0,1)$, the contact graph of $P(t)$ is $G^-$.
    \item The contact graph of $P(1)$ is $H$.
\end{itemize}
This means that we can continuously push the two disks 
corresponding to $e^-$ apart,
while maintaining a packing (no interior overlaps) and maintaining 
all of the other contacts in $G^-$, and we can
keep doing this until the two disks across $e^+$ come together (see Figure~\ref{fig:flow}), giving us $H$ as the contact graph.
Moreover, up to the speed of the parameterization and  up to M{\"o}bius transformations and reflections,
this path is unique and is parameterized by a single inversive distance.
The construction of this path is our main technical result, Theorem~\ref{thm:flow}.

  \begin{figure}[t]
    \centering
    \begin{tabular}{cccc}
  {     \includegraphics[width=0.23\textwidth]{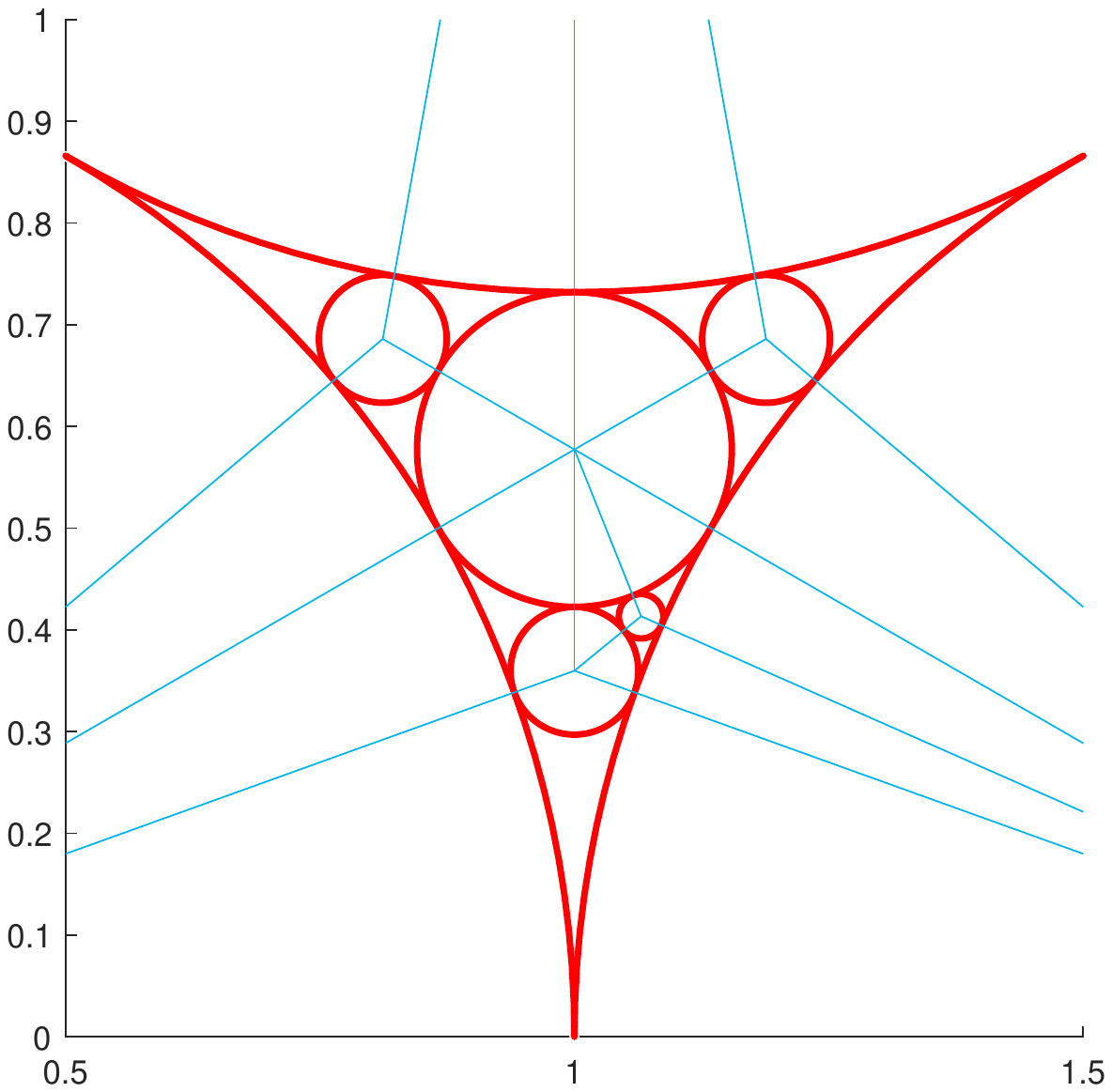}}
&
  {      \includegraphics[width=0.23\textwidth]{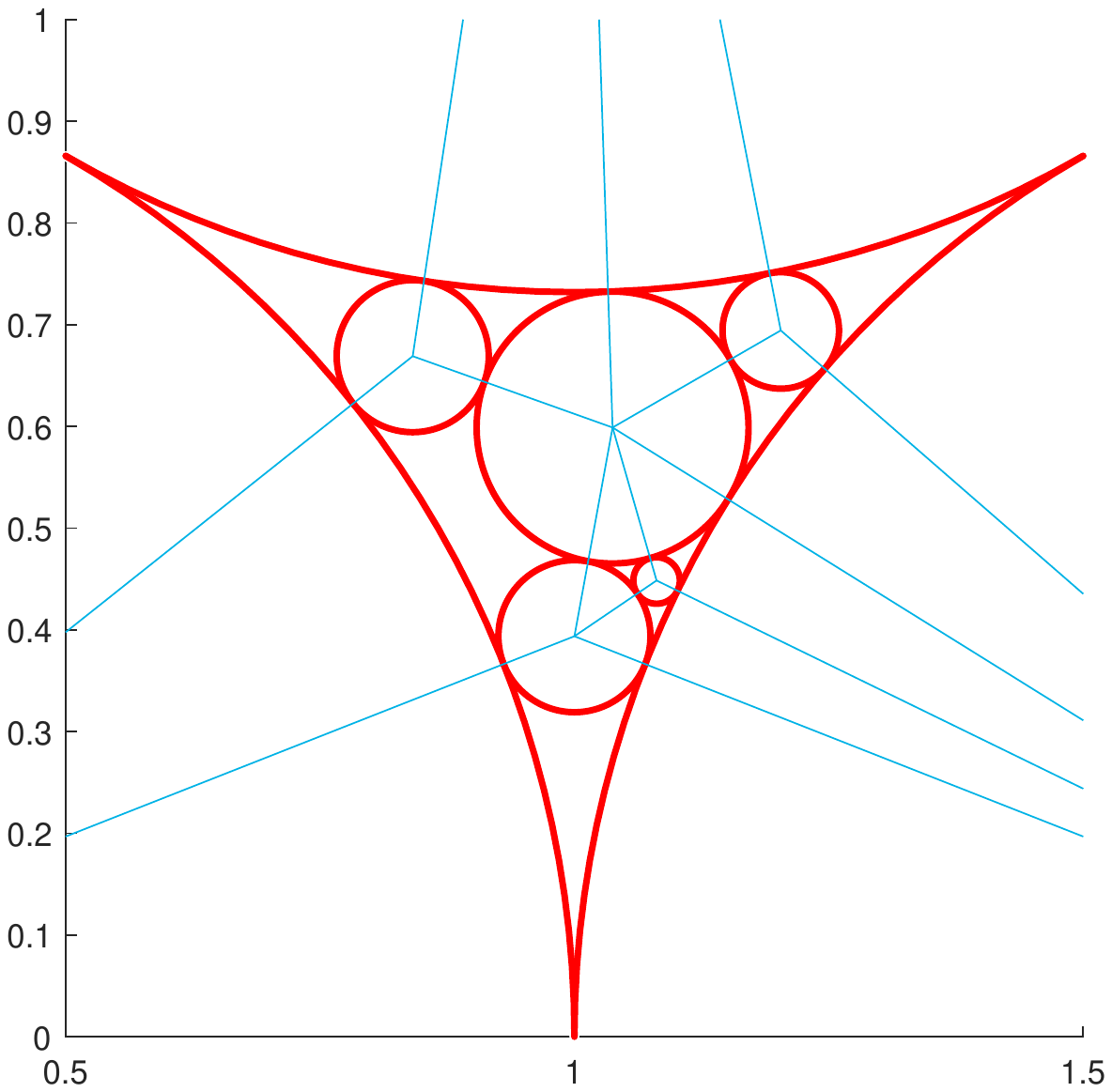}}
&
  {      \includegraphics[width=0.23\textwidth]{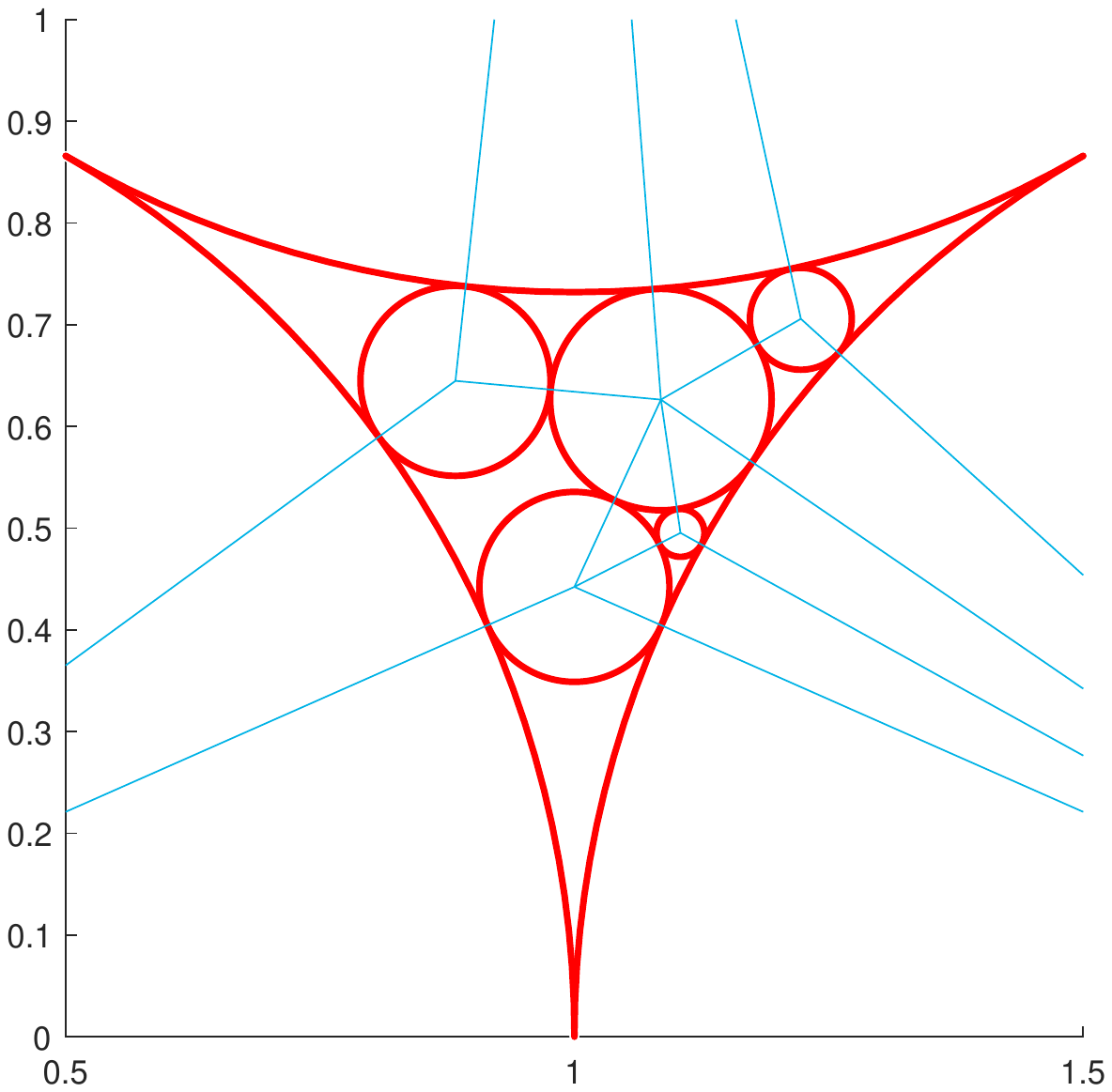}}
&
  {      \includegraphics[width=0.23\textwidth]{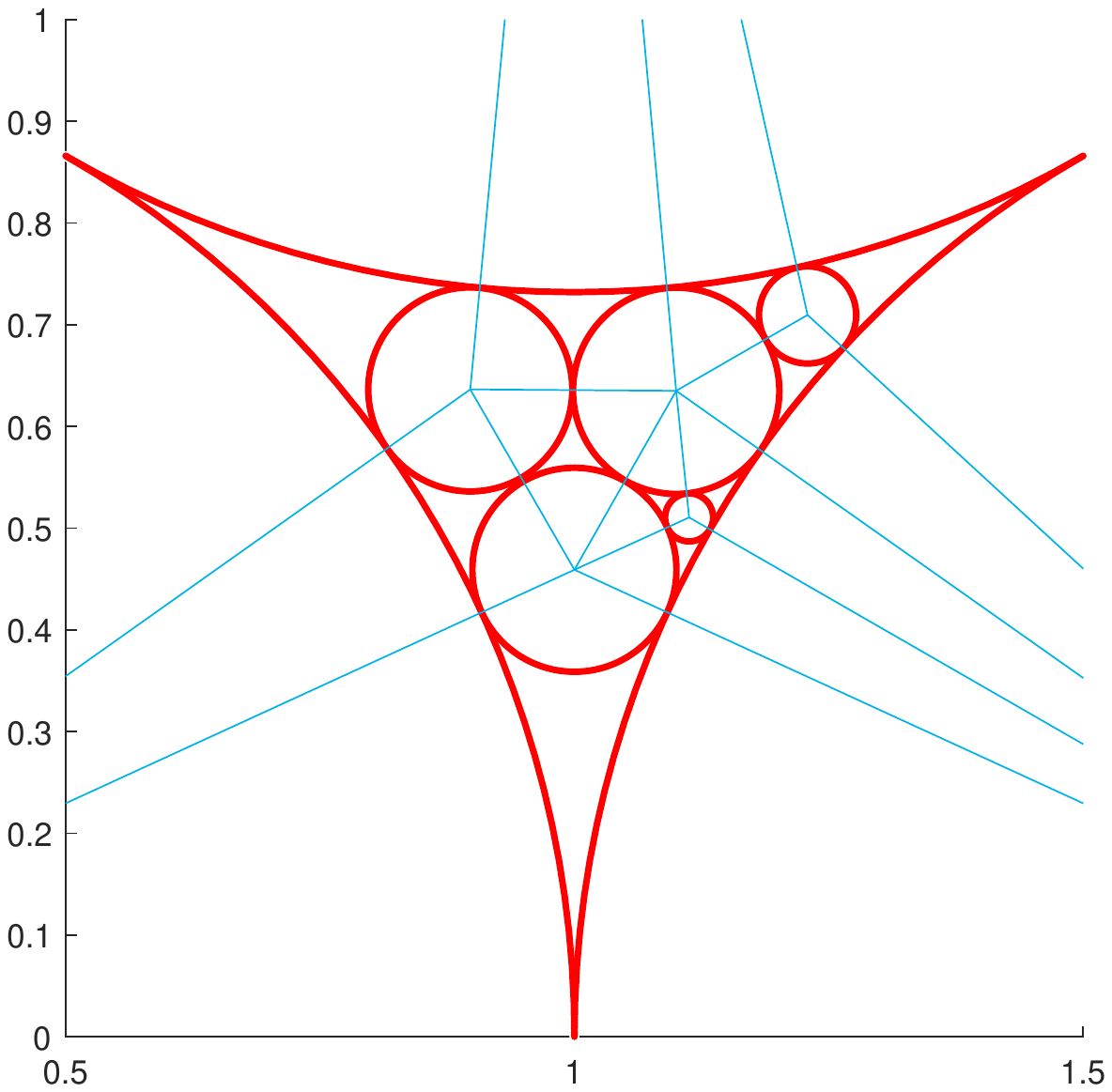}}
\\
  (a) & (b) &(c)&(d) \\
\end{tabular} 
    \caption{(a): A trilaterated  packing.  We are about to flip the edge $e^-$ connecting the central disk
    to the boundary disk on the lower left. (b) and (c): tridisk-contained almost
    triangulated packings (one quadrilateral) during the flow. (d): At the end of the flow,
    the cross-edge $e^+$  has made contact giving us a triangulated packing with a new
    graph.
    }
    \label{fig:flow}
\end{figure}

Together these ingredients can be used to establish the KAT circle packing theorem.
Our construction requires the inversion of a certain Jacobian matrix at each time step
of the flow, so we do not necessarily expect it to be faster than other practical 
circle packing techniques such as~\cite{ken}.
Interestingly, in our approach we always work with disks in the plane and never 
introduce or need to deal with any cone singularities in either the theory or the construction. 
Thus it provides a novel view of the circle packing problem.

\section{Preliminaries}

\begin{definition}
A (planar) \defn{disk configuration} $(\p,\r)$ 
is a \defn{sequence} (ie. an indexed family) 
of $n$ disks 
placed 
in $\RR^2$,
with centers at $\p := (\p_1, \ldots, \p_n)$ 
and with positive radii $\r := (r_1, \ldots, r_n)$. 
A (planar) \defn{packing} $(\p,\r)$ is a 
disk configuration
such that their interiors 
are mutually disjoint.  The \defn{contact graph} of a packing is the 
graph (over the same index set) that has one vertex
corresponding to each disk and one
edge corresponding to
each  pair of mutually tangent disks. 
\end{definition}

\begin{definition}
  Let $G$ be any graph with $n$ vertices, and let
  a  \defn{point configuration}
  $\p := (\p_1, \ldots, \p_n)$ 
  be a sequence of $n$ points in the plane.
  Define $(G,\p)$ to be the straight line drawing of the graph
  $G$ with vertex positions determined by  $\p$.
\end{definition}

\begin{lemma}
\label{lem:packemb}
  Let $G$ be the contact graph of some packing $(\p,\r)$. Then $G$ must be planar and $(G,\p)$ is a planar embedding.
\end{lemma}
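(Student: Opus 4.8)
The plan is to prove the stronger statement that the straight-line drawing $(G,\p)$ has no crossings, since a graph admitting a crossing-free straight-line drawing is by definition planar, so both assertions follow at once. I would start by recording the elementary geometric facts forced by the packing hypothesis. Writing $D_m$ for the closed disk with center $\p_m$ and radius $r_m$, disjointness of interiors gives $|\p_i-\p_m|\ge r_i+r_m$ for every $i\ne m$; in particular every center $\p_i$ lies strictly outside every other closed disk $D_m$, and $\p_i\ne\p_j$ whenever $\{i,j\}$ is an edge. If $\{i,j\}$ is an edge, then $D_i$ and $D_j$ are externally tangent, so $|\p_i-\p_j|=r_i+r_j$ and their unique common point $t_{ij}$ lies on the open segment between the centers, at distance $r_i$ from $\p_i$. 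Hence the edge segment $s_{ij}:=[\p_i,\p_j]$ decomposes as $[\p_i,t_{ij})\subset\intr(D_i)$, the single point $t_{ij}\in\partial D_i\cap\partial D_j$, and $(t_{ij},\p_j]\subset\intr(D_j)$; in particular $s_{ij}\subset \intr(D_i)\cup\{t_{ij}\}\cup\intr(D_j)$, and $s_{ij}$ meets each of $\partial D_i,\partial D_j$ only at $t_{ij}$.

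Next I would rule out a crossing between two edges $\{i,j\}$ and $\{k,l\}$ on four distinct vertices, meeting at a point $x\in s_{ij}\cap s_{kl}$. I claim $x$ cannot lie in any of the four open disks. For instance, if $x\in\intr(D_i)$, then since the four open disks are pairwise disjoint and a boundary point of one disk can never lie in the interior of another (otherwise the two interiors would overlap on a neighborhood), $x$ lies neither in $\intr(D_k)$, nor in $\intr(D_l)$, nor on $\partial D_k\cup\partial D_l$; but $x\in s_{kl}\subset \intr(D_k)\cup\{t_{kl}\}\cup\intr(D_l)$ forces one of these, a contradiction. By symmetry $x$ lies in no open disk, so from the two decompositions $x=t_{ij}=t_{kl}$.

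To finish the non-adjacent case I must exclude this common tangent point. Because the two edges genuinely cross, the lines through $\p_i,\p_j$ and through $\p_k,\p_l$ are distinct. Near $x$ the interior of $D_i$ (resp.\ $D_j$) is the open half-plane on the side of $\p_i$ (resp.\ $\p_j$) bounded by the common tangent line $\ell_{ij}\perp s_{ij}$ at $x$, while $\intr(D_k)$ is locally the half-plane bounded by $\ell_{kl}\perp s_{kl}$. Since $\ell_{ij}\ne\ell_{kl}$, the half-plane for $D_k$ overlaps at least one of the two opposite half-planes for $D_i,D_j$ in an open sector, so there are points arbitrarily close to $x$ lying in $\intr(D_k)\cap(\intr(D_i)\cup\intr(D_j))$, contradicting disjointness. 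This is the one genuinely delicate point of the proof: mere disjointness of interiors is not enough, and one must invoke the local half-plane geometry of the smooth disk boundaries at $x$.

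Finally I would treat two edges sharing a vertex $i$, say $\{i,j\}$ and $\{i,l\}$ with $j\ne l$. Two segments emanating from $\p_i$ can meet at a point other than $\p_i$ only if they overlap, which forces $\p_i,\p_j,\p_l$ to be collinear with $\p_j,\p_l$ on the same ray from $\p_i$; taking $\p_j$ to be the nearer center gives $\p_j\in s_{il}\subset \intr(D_i)\cup\{t_{il}\}\cup\intr(D_l)$, impossible since $\p_j$ lies strictly outside both $D_i$ and $D_l$. Thus every pair of edges meets only at a shared endpoint, so $(G,\p)$ is a crossing-free straight-line drawing and $G$ is planar, as claimed.
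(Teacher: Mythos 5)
Your proof is correct and takes essentially the same approach as the paper's: each edge segment lies inside the union of its two tangent disks, so disjointness of the disk interiors forbids any crossing. The paper disposes of this in two sentences, and you have simply filled in the details, including the one genuinely delicate subcase (two edges meeting at a common tangency point, handled via the local half-plane geometry) that the paper's sketch leaves implicit.
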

\begin{proof}
This follows from the following two
facts:
Each of the line segments in $(G,\p)$ 
must be contained within its associated 
two touching disks.
None of the disks have overlapping interiors.
\end{proof}

\begin{definition}
A \defn{M{\"o}bius transformation} of the extended complex plane is a function
of the form
\ba
\phi(z) := \frac{az+b}{cz+d}
\ea
where $ad-bc\neq 0$.

\end{definition}

M{\"o}bius transformations are injective, orientation preserving.

We can apply a M{\"o}bius transformation to $\RR^2$ by identifying
it with $\CC$ (the transformation  may map one point to infinity,
and infinity to another point).

The \defn{generalized M{\"o}bius transformations} are generated by the
M{\"o}bius transformations and  reflections through  lines.

An \defn{inversion about a circle}, is a transformation where
 a point is sent to the point ``one over its distance'' from the center of the circle, 
 times the square of the radius of the circle on the ray through the center of the circle and the point.  
 It can be shown that a circle  inversion is a generalized M{\"o}bius transformation.


A generalized M{\"o}bius transformation $\phi$ maps an
input circle or a line to an output circle or a line.
If a circle $C_i$ maps to a circle $C_o$ with the same orientation under $\phi$,
then $\phi$ will map
the interior of $C_i$ (ie. a disk) to the interior
of $C_o$ (ie. a disk).
If a circle $C_i$ maps to a circle $C_o$ with the opposite orientation under $\phi$,
then $\phi$ will map
the interior of $C_i$ (ie. a disk) to the exterior
of $C_o$ (ie. the complement of a closed disk).
From its injectivity, we see that
a M{\"o}bius transformation maps a  packing to a  packing
(but up to one of the disks may become
a half plane or the complement of a  disk).

The following theorem is standard:

\begin{theorem}
\label{thm:mob3}
There is a unique M{\"o}bius transformation mapping 
any sequence of three distinct
"input"
points to any other sequence of three distinct "output"
points. 
This map varies smoothly as we alter the desired output point
locations.
\end{theorem}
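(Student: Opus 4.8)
The plan is to prove this classical fact in three stages — existence, uniqueness, and smooth dependence on the output data — by exploiting the group structure of M\"obius transformations. First I would record that they form a group: the composition of $\frac{az+b}{cz+d}$ with a second such map corresponds to the product of the associated matrices $\begin{pmatrix} a & b \\ c & d \end{pmatrix}$, and the nondegeneracy condition $ad-bc \neq 0$ is exactly invertibility of this matrix, so composites and inverses are again M\"obius. Since scaling all four coefficients by a common nonzero factor leaves the map unchanged, the group is identified with $2\times 2$ complex matrices modulo scalars, acting on the extended plane via homogeneous coordinates $[z_0 : z_1]$ on $\PP^1$.

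For existence, the key device is the cross-ratio normalizing map. Given three distinct points $(w_1, w_2, w_3)$, the explicit formula
\[
\psi_{w}(z) = \frac{(z - w_1)(w_2 - w_3)}{(z - w_3)(w_2 - w_1)}
\]
is a M\"obius transformation carrying $(w_1,w_2,w_3)$ to the canonical triple $(0,1,\infty)$, with the evident conventions when some $w_i = \infty$; a short computation shows its determinant equals $(w_1-w_3)(w_2-w_1)(w_2-w_3)$, nonzero precisely because the $w_i$ are distinct. Given input points $\p = (\p_1,\p_2,\p_3)$ and output points $\q=(\q_1,\q_2,\q_3)$, the composite $\psi_{\q}^{-1} \circ \psi_{\p}$ is then a M\"obius transformation sending each $\p_i$ to $\q_i$.

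For uniqueness, suppose two M\"obius maps $\phi_1,\phi_2$ both send $\p_i \mapsto \q_i$. Then $g := \phi_2^{-1}\circ \phi_1$ is a M\"obius transformation fixing the three distinct points $\p_1,\p_2,\p_3$, so it suffices to show $g$ is the identity. Writing $g(z)=\frac{az+b}{cz+d}$, any finite fixed point satisfies $cz^2 + (d-a)z - b = 0$; for a nonidentity map the coefficients $c,\ d-a,\ b$ are not all zero, so this quadratic has at most two roots in $\CC$, and $\infty$ is fixed only when $c=0$. Three distinct fixed points therefore force $c=b=0$ and $a=d$, i.e. $g=\mathrm{id}$ and $\phi_1=\phi_2$.

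Finally, smoothness — which is the property actually needed for the flow — I would read off the explicit composite. The coefficients of $\psi_{\q}$ are polynomials in $\q_1,\q_2,\q_3$; inversion corresponds to the adjugate matrix, whose entries are again polynomial; and composition is matrix multiplication. Hence the coefficients $(a,b,c,d)$ of $\psi_{\q}^{-1}\circ\psi_{\p}$ are polynomial in the output points, and the determinant $ad-bc$ is a nonzero scalar multiple of $\prod_{i<j}(\q_i - \q_j)$, which stays nonzero exactly on the open set where the outputs remain distinct. Thus the transformation, and in particular $\phi(z)$ for each fixed $z$, depends smoothly — indeed rationally — on the output points. I expect the only genuine care to lie not in any deep difficulty but in the bookkeeping at $\infty$: making the formulas and the smoothness claim transparent across the charts of $\PP^1$ containing $\infty$, which is handled cleanly by passing to the linear action of matrices on homogeneous coordinates so that every step above becomes a statement about polynomial maps of matrices.
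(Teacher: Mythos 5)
Your proof is correct; the paper itself offers no argument here, simply labeling the theorem ``standard,'' and what you have written is precisely the standard proof (existence via the cross-ratio map to $(0,1,\infty)$, uniqueness via the at-most-two fixed points of a non-identity M\"obius transformation, and smoothness via polynomial dependence of the matrix coefficients on the output triple together with non-vanishing of the determinant on the locus of distinct points). Your closing remark about passing to homogeneous coordinates on $\PP^1$ to handle $\infty$ cleanly is the right way to make the smoothness claim uniform, though for the paper's application the output points are always finite, so even the affine formulas suffice.
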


\begin{definition}
  The \defn{inversive distance} between two disks,
  $(\p_i,r_i)$ and $(\p_j,r_j)$ is defined as
  \ba
  \frac{||\p_i-\p_j||^2 -r_i^2 -r_j^2}{2r_ir_j}.
\ea  
It evaluates to $1$ when the disks have 
external tangential contact.
\end{definition}

Inversive distances play well with M{\"o}bius transformations
as the following standard theorem states:
\begin{theorem}
\label{thm:invrMob}
The inversive distance between two disks is invariant for
generalized M{\"o}bius transformations.
\end{theorem}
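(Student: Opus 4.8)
The plan is to reduce to a generating set for the group of generalized M\"obius transformations and verify invariance on each generator, since a quantity preserved by each of two maps is preserved by their composition. A convenient generating set is the Euclidean similarities (translations, rotations, and uniform scalings) together with reflections in lines and a single circle inversion: every circle inversion is conjugate, via a similarity, to inversion in the unit circle centered at the origin, so it suffices to treat that one inversion. I would carry out the similarity cases first and reserve the inversion for last, as it is the only case requiring real work.

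For the similarity and reflection generators, invariance is immediate from the shape of the inversive distance. Translations, rotations, and line reflections are isometries of $\RR^2$, so they preserve both $\|\p_i-\p_j\|$ and every radius $r_i$; hence numerator and denominator are each preserved. A uniform scaling by $\lambda>0$ acts by $(\p_i,r_i)\mapsto(\lambda\p_i,\lambda r_i)$, which multiplies the numerator $\|\p_i-\p_j\|^2-r_i^2-r_j^2$ and the denominator $2r_ir_j$ each by $\lambda^2$, leaving the ratio fixed.

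The crux is inversion in the unit circle about the origin $O$. First I would record how it acts on a disk $(\p_i,r_i)$ not passing through $O$: writing $d_i=\|\p_i\|$ and $A_i=d_i^2-r_i^2$, the image is again a round disk with center $\p_i'=\p_i/A_i$ and radius $r_i'=r_i/|A_i|$; this is obtained by tracking the two points where the line $O\p_i$ meets the circle and inverting their signed distances. Substituting these image data into the inversive distance of the two image disks, the key algebraic identity to confirm is
\[
\|\p_1'-\p_2'\|^2-(r_1')^2-(r_2')^2=\frac{\|\p_1-\p_2\|^2-r_1^2-r_2^2}{A_1A_2},
\]
which falls out after clearing the common denominator $A_1^2A_2^2$ and using $d_i^2-r_i^2=A_i$. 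Dividing by $2r_1'r_2'=2r_1r_2/|A_1A_2|$ then recovers the original inversive distance up to the factor $\operatorname{sign}(A_1A_2)$.

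The main obstacle, and the only genuinely subtle point, is this sign. The factor $\operatorname{sign}(A_i)$ is negative exactly when $r_i>d_i$, i.e. when $O$ lies inside the $i$-th disk, and in that case the inversion sends the disk to the exterior of a circle, reversing its orientation (precisely the phenomenon flagged before the theorem). Thus whenever the transformation carries honest disks to honest disks with orientation preserved we have $A_1A_2>0$ and the inversive distance is preserved exactly; working with oriented circles and signed radii makes it an identity in all cases. One could instead give a coordinate-free argument: circles correspond to points of a Lorentzian ``space of spheres,'' the inversive distance is a normalized Lorentz inner product, and generalized M\"obius transformations act as Lorentz transformations, which preserve that form; but the direct computation above is the most elementary route.
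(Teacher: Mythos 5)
The paper offers no proof of this statement at all---it is introduced with ``The following theorem is standard'' and used as a black box---so there is nothing to compare line by line; your proposal supplies an actual argument, and it is essentially the standard correct one. The reduction to generators is sound: similarities and line reflections clearly preserve the quantity (isometries preserve numerator and denominator; a scaling by $\lambda$ multiplies both by $\lambda^2$), and every generalized M\"obius transformation is a composition of these with inversion in the unit circle. Your inversion computation checks out: with $A_i=\|\p_i\|^2-r_i^2$ one gets $\|\p_1'-\p_2'\|^2-(r_1')^2-(r_2')^2=\bigl(A_1+A_2-2\langle\p_1,\p_2\rangle\bigr)/(A_1A_2)$, and $A_1+A_2-2\langle\p_1,\p_2\rangle=\|\p_1-\p_2\|^2-r_1^2-r_2^2$, so the ratio is preserved up to $\operatorname{sign}(A_1A_2)$. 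The sign caveat you raise is genuine, but it is exactly the situation the paper has already set aside: when $A_i<0$ the image of disk $i$ is the complement of a disk rather than a disk, so the theorem's hypothesis (``two disks'' on both sides) is not met; whenever disks go to disks, $A_1A_2>0$ and invariance is exact. Two small points you should tie off for completeness: (i) the degenerate case where a circle passes through the center of inversion (image is a half-plane) should be excluded or handled by perturbing the center of inversion via a pre-composed translation, consistent with the paper's own remark that one disk may become a half-plane; (ii) it is worth one sentence justifying that similarities, line reflections, and the single unit-circle inversion do generate the full group, e.g.\ because $z\mapsto 1/z$ is that inversion composed with conjugation and every M\"obius map is a word in $z\mapsto az+b$ and $z\mapsto 1/z$. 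Your closing remark about the Lorentzian ``space of circles'' is the slicker conceptual proof and would also be acceptable, but the elementary computation is closer in spirit to the paper's hands-on treatment of inversive distance.
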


\section{Tridisk-Contained  Packings}

In this section, we will use Theorem~\ref{thm:mob3} and
a standard construction (e.g,~\cite{marden})
to mod out 
generalized M{\"o}bius transformations.
The basic idea is to use a transformation to always put three chosen disks in to a  canonical form.
We go through the details somewhat carefully.

\begin{figure}[t]
    \centering
    \includegraphics[width=0.40\textwidth]{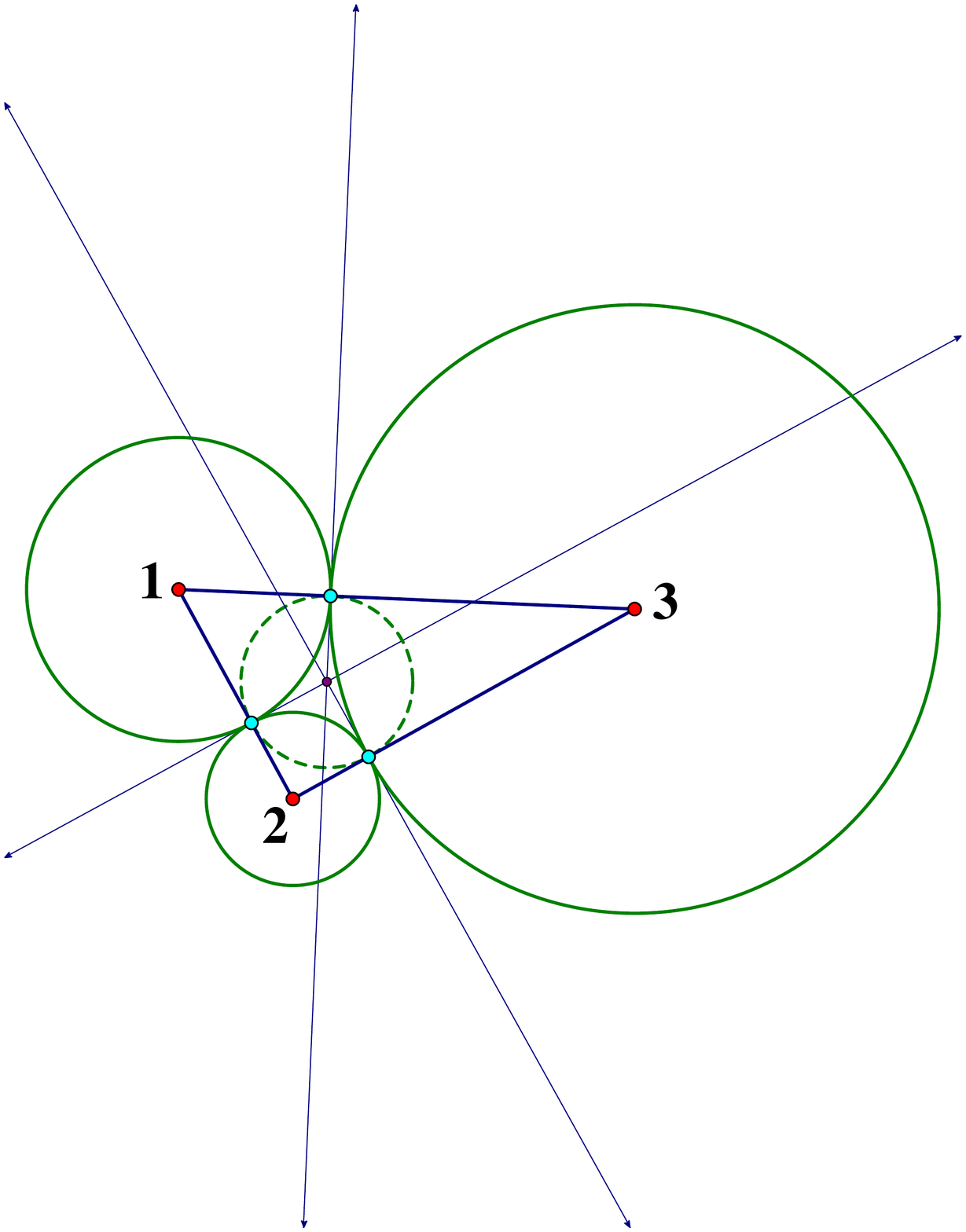}
    \caption{A set of three mutually 
    externally touching circles, with red center points,  have three contact points in blue.
    These contact points lie on a circle that is  called the
    \defn{incircle} (dashed) of the triangle determined by the centers of the circles. The blue contact points are
    not contained in any half of this incircle. 
    Going the other way, given any three points that
    are not contained in any half of their incident circle, we can associate a unique set of three mutually externally touching
    circles. 
    }
    \label{fig:intri}
\end{figure}

\begin{definition}
A \defn{tridisk} is a sequence of three non-overlapping disks in mutual tangential contact.
(See Figure~\ref{fig:intri}.)
The \defn{canonical tridisk} is the tridisk made of three 
disks of unit radius, where the disk centers lie on three  points
of a fixed equilateral triangle. (And to make this unique, we  fix an indexing of these three disks.)
(See Figure~\ref{fig:canon}.)

The \defn{incircle} of a tridisk is the circle touching the three
contact points of the tridisk. (See Figure~\ref{fig:intri}.)
\end{definition}

\begin{lemma}
\label{lem:tri1}
Given an input tridisk, there is a unique
M{\"o}bius transformation mapping it to the canonical tridisk.
\end{lemma}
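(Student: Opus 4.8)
The plan is to reduce this statement about disks to a statement about the three contact points of a tridisk, and then apply Theorem~\ref{thm:mob3}. First I would record that a tridisk, being three mutually tangent disks, has exactly three contact points, one for each tangent pair; call them $a_{12},a_{13},a_{23}$ for the input tridisk and $a_{12}',a_{13}',a_{23}'$ for the canonical one. These three points are distinct, lying on distinct sides of the triangle of centers as in Figure~\ref{fig:intri}. Since a tridisk is an indexed sequence, the requirement that a map send the input tridisk to the canonical tridisk means disk $i\mapsto$ disk $i$, and hence forces the contact point $a_{ij}$ to go to $a_{ij}'$. Applying Theorem~\ref{thm:mob3} to these two ordered triples of distinct points produces a unique candidate Möbius transformation $\phi$ with $\phi(a_{ij})=a_{ij}'$ for each pair.

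For existence, I would show this $\phi$ actually carries the three input disks onto the three canonical disks. A Möbius transformation sends circles to circles and is conformal, so it preserves tangency of circles together with their points of tangency; equivalently, by Theorem~\ref{thm:invrMob} the images of any two input disks again have inversive distance $1$. Hence $\phi$ sends the three boundary circles of the input tridisk to three mutually tangent circles whose pairwise contact points are exactly $a_{12}',a_{13}',a_{23}'$. Now I would invoke the reconstruction fact illustrated in Figure~\ref{fig:intri}: three points not lying in a closed half of the circle through them determine a unique triple of mutually externally tangent circles having those points as contacts. (The circle through the three points is the incircle of the center triangle; the common tangent line at each contact point is the line joining that point to the incircle center, and each boundary circle is then pinned down as the unique circle passing through its two contact points and tangent to the two corresponding such lines.) Since $a_{12}',a_{13}',a_{23}'$ are the contact points of the canonical tridisk, this uniqueness forces the image circles to be the canonical boundary circles. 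A short orientation check---$\phi$ is orientation preserving and injective, and one confirms the pole of $\phi$ does not land inside any input disk---guarantees that the three bounded input disks map to the three bounded canonical disks with matching interiors, rather than to their complements or to half-planes, so $\phi$ maps the input tridisk to the canonical tridisk.

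For uniqueness, suppose $\psi$ is any Möbius transformation taking the indexed input tridisk to the canonical tridisk. Then $\psi$ takes disk $i$ to canonical disk $i$, so it takes the tangency point of disks $i,j$ to the tangency point of canonical disks $i,j$; that is, $\psi(a_{ij})=a_{ij}'$ for all three pairs. Thus $\psi$ agrees with $\phi$ on three distinct points, and the uniqueness clause of Theorem~\ref{thm:mob3} gives $\psi=\phi$.

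I expect the main obstacle to be the existence step, and specifically the two pieces of bookkeeping bundled into it: establishing cleanly the reconstruction fact of Figure~\ref{fig:intri} (that three admissible contact points determine the three tangent circles uniquely), and the orientation/interior argument ensuring the image disks are the canonical disks and not their complements or half-planes. By contrast, the reduction to contact points and the uniqueness half both follow immediately from Theorem~\ref{thm:mob3}.
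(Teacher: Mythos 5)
Your proposal is correct and follows essentially the same route as the paper: map the ordered contact points via Theorem~\ref{thm:mob3}, use the reconstruction fact of Figure~\ref{fig:intri} to identify the image circles with the canonical circles, and then check that interiors go to interiors. The only difference is in that last step, where the paper replaces your ``confirm the pole does not land inside any input disk'' check with a cleaner observation: by injectivity the three image regions are disjoint, and since the exterior of any canonical circle contains the other two canonical disks, none of the images can be an exterior, so all three must be interiors.
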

\begin{proof}
  The input tridisk has a sequence of three  contact points. 
  There is a unique
  M{\"o}bius transformation $\phi$ mapping these to the sequence of three contact points of the
  canonical tridisk (Theorem~\ref{thm:mob3}).  
  A sequence of three externally touching circles are uniquely determined by such a sequence of  points (as in Figure~\ref{fig:intri}).
  So $\phi$ must map
 the three input touching circles 
 to the canonical three circles (with external 
 tangential contact).
 Now from injectivity, $\phi$ must map the interiors of the
 three distinct input disks to three disjoint regions.
 This disjointess ensures that $\phi$
 must map the interiors of the three input circles
 to the interiors of the three canonical circles.
\end{proof}

Note that the above transformation may map the 
interior of the tridisk's 
incircle to either the interior or the exterior of the canonical tridisk's
incircle. This depends on the relative orientations of the three contact points.

\begin{corollary}
\label{cor:tri2}
Given an input tridisk.
Let us select either the interior or the exterior of the
tridisk's incircle.
Then there is a unique
\textbf{generalized} M{\"o}bius transformation mapping the tridisk
to the canonical tridisk and which maps
the selected side of the tridisk's 
incircle to the interior of the canonical tridisk's
incircle.
\end{corollary}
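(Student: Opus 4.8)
The plan is to build the desired map from the transformation already produced by Lemma~\ref{lem:tri1} and, when necessary, to correct its behaviour on the incircle by a single well-chosen reflection. First I would invoke Lemma~\ref{lem:tri1} to obtain the unique M\"obius transformation $\phi$ carrying the input tridisk to the canonical tridisk. By the remark preceding the statement, $\phi$ sends the selected side of the input incircle either to the interior or to the exterior of the canonical incircle. In the first case $\phi$ already has all the required properties and I would take it as the transformation. In the second case I would post-compose $\phi$ with the inversion $\iota$ about the canonical incircle, setting the transformation to $\iota\circ\phi$.

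The crux of the existence half is therefore to check that $\iota$ fixes the canonical tridisk while swapping the two sides of the canonical incircle. The swap is immediate, since any circle inversion interchanges the interior and exterior of its defining circle. For the fixing property, the key geometric fact is that the canonical incircle is orthogonal to each of the three circles of the canonical tridisk; this can be verified directly from the equilateral symmetry, as the squared distance from the incircle's center to a tridisk center equals the sum of the squared radii. Since an inversion maps every circle orthogonal to its defining circle to itself, $\iota$ fixes each of the three canonical disk circles setwise, and a short check that the center of one such circle remains on its own side shows the disk interiors are preserved as well; hence $\iota$ fixes the canonical tridisk together with its indexing. Consequently $\iota\circ\phi$ again carries the input tridisk to the canonical tridisk, and it now sends the selected side to the interior of the canonical incircle, as desired.

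For uniqueness, suppose $\psi_1$ and $\psi_2$ are two generalized M\"obius transformations with the stated properties. Then $g := \psi_2\circ\psi_1^{-1}$ is a generalized M\"obius transformation fixing the canonical tridisk, hence fixing each of its three contact points, since the indexing is preserved, and mapping the interior of the canonical incircle to itself. If $g$ is orientation preserving it is an ordinary M\"obius transformation fixing three distinct points, so by Theorem~\ref{thm:mob3}, applied to the identity as the unique such map, it is the identity. If $g$ is orientation reversing, then $g\circ g$ is an orientation-preserving M\"obius transformation fixing the same three points, hence the identity; thus $g$ is an anti-involution whose fixed-point set contains three distinct points and is therefore the circle through them, namely the canonical incircle, so $g$ equals the inversion $\iota$. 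But $\iota$ interchanges the two sides of the incircle, contradicting that $g$ preserves its interior. Hence $g$ is the identity and $\psi_1=\psi_2$.

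The step I expect to require the most care is the verification underlying the existence argument: that the incircle meets each tridisk circle orthogonally and that the resulting inversion preserves, rather than swaps, the three disk interiors. Everything else reduces to the classification of generalized M\"obius transformations fixing three points, which Theorem~\ref{thm:mob3} makes routine.
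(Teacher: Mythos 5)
Your proposal is correct, and its existence half follows the same strategy as the paper: take the M\"obius transformation of Lemma~\ref{lem:tri1} and, if it sends the selected side to the exterior, post-compose with the inversion about the canonical incircle. The one substantive difference in that half is how you justify that this inversion fixes the canonical tridisk: you verify that the incircle is orthogonal to each of the three unit circles (indeed $d^2 = 4/3 = 1 + 1/3 = r_1^2+r_2^2$ for the canonical configuration) and invoke the fact that an inversion preserves circles orthogonal to its defining circle, whereas the paper observes that the inversion fixes the three contact points, which already determine the three mutually tangent circles uniquely (the fact used in Lemma~\ref{lem:tri1}). The paper's route avoids any computation; yours is equally valid and makes the geometry explicit. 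Beyond that, you supply an explicit uniqueness argument --- reducing to a generalized M\"obius transformation fixing three points, splitting into the orientation-preserving case (identity by Theorem~\ref{thm:mob3}) and the orientation-reversing case (forced to be the inversion about the incircle, which is excluded because it swaps the two sides) --- which the paper leaves implicit. This is a genuine addition and it is sound; the only loosely phrased step is your claim that checking where one center goes settles which side of each unit circle is preserved, which is more cleanly justified by noting that the inversion center lies outside each unit disk, so each closed unit disk has compact image and must map to itself rather than to its unbounded complement.
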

\begin{proof}
We start with the M{\"o}bius transformation of Lemma~\ref{lem:tri1}.
If it has mapped the wrong side of the
input incricle's to the output incricle's exterior
we follow this by inverting through the canonical tridisk incircle. This must preserve the contact points, and
thus must leave each of the three circles invariant.
\end{proof}

The following will be needed later.

\begin{corollary}
\label{cor:tri3}
Given an input tridisk, with centers 
$(\p_1,\p_2,\p_3)$ and radii $(r_1,r_2,r_3)$, and
given any fixed displacement vector $\v \in \RR^2$, 
then for sufficiently
small $t$, 
there is a unique M{\"o}bius transformation $f_t$ that maps this tridisk to an output tridisk 
that has centers at $(\p_1,\p_2,\p_3+t\v)$  (the radii can change).
Moreover this transformation is smooth in $t$.
\end{corollary}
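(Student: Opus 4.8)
The plan is to reduce everything to Theorem~\ref{thm:mob3} by first constructing the desired output tridisk directly from its prescribed centers, and only then recovering the transformation from its action on contact points. The key observation is that a tridisk is already pinned down by its three centers alone: if the centers are $\q_1,\q_2,\q_3$, then external tangency forces the radii $\rho_1,\rho_2,\rho_3$ to satisfy the linear system $\rho_i+\rho_j = \|\q_i-\q_j\|$ for each pair $\{i,j\}$, and the corresponding coefficient matrix is invertible (its determinant is $\pm 2$). Hence the radii are a smooth, explicit function of the centers, and a triple of centers determines at most one tridisk.

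First I would define, for each small $t$, the candidate output tridisk $T_t$ to be the tridisk with centers $(\p_1,\p_2,\p_3+t\v)$ and radii obtained by solving the above system at those centers. Since the three distances $\|\q_i-\q_j\|$ are smooth in $t$ as long as the centers stay distinct, and since at $t=0$ the solution returns exactly the input radii $(r_1,r_2,r_3)$ (so that $T_0$ is the input tridisk itself), the radii of $T_t$ are smooth in $t$ and remain positive for $t$ small by continuity. Thus $T_t$ is a genuine tridisk for sufficiently small $t$, and its three ordered contact points $\q_1(t),\q_2(t),\q_3(t)$ depend smoothly on $t$.

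Next I would apply Theorem~\ref{thm:mob3} to the input tridisk's three (ordered) contact points and the points $\q_1(t),\q_2(t),\q_3(t)$: this yields a unique Möbius transformation $f_t$ sending the former to the latter, depending smoothly on $t$, with $f_0$ the identity. It then remains to verify that $f_t$ actually carries the input tridisk onto $T_t$. As a Möbius transformation, $f_t$ sends the three input circles to three mutually tangent circles meeting at the prescribed contact points $\q_i(t)$; by the uniqueness of the circles associated to a triple of contact points (Figure~\ref{fig:intri}, as used in Lemma~\ref{lem:tri1}), these image circles coincide with the circles of $T_t$, provided $f_t$ preserves the disk/exterior orientation so that interiors map to interiors. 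Since $f_0=\mathrm{id}$ preserves orientation and $f_t$ varies continuously, no disk can flip to an exterior region for $t$ small; hence $f_t$ maps the input tridisk onto $T_t$, whose centers are $(\p_1,\p_2,\p_3+t\v)$ by construction.

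Finally, uniqueness follows because any Möbius transformation carrying the input tridisk to a tridisk with centers $(\p_1,\p_2,\p_3+t\v)$, respecting the indexing of the disks, must send each input contact point to the corresponding contact point of that tridisk—which is forced to be $T_t$—and Theorem~\ref{thm:mob3} makes the map on three distinct points unique. The step I expect to be most delicate is the orientation bookkeeping of the previous paragraph: ruling out that $f_t$ sends some disk interior to a disk exterior, which would still produce three mutually tangent circles through the correct points but the wrong configuration. The continuity-from-the-identity argument controls this precisely for small $t$, which is exactly the regime the statement requests.
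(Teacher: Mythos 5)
Your proposal is correct and follows essentially the same route as the paper: construct the output tridisk directly from the prescribed centers (the radii being uniquely and smoothly determined), then obtain the unique M\"obius transformation from the ordered contact points via Theorem~\ref{thm:mob3}, exactly as the paper does through Lemma~\ref{lem:tri1}. The only cosmetic difference is that you rule out interior-to-exterior flips by continuity from the identity at $t=0$, whereas the paper inherits this from the disjointness argument inside Lemma~\ref{lem:tri1}; both are valid.
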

\begin{proof}
  Given the desired three (non-colinear)
  centers $(\p_1,\p_2,\p_3+t\v)$, there is a 
  always a unique set of radii $(r'_1, r'_2, r'_3)$, such that this creates an output tridisk.
  This output tridisk has a sequence of three contact points, giving us the unique M{\"o}bius transformation
  of Lemma~\ref{lem:tri1}. 
  All of the above steps are smooth.
   \end{proof}

\begin{definition}
Given a packing $P$ with contact graph $G$
that includes some triangular face $T$, we can 
\defn{mark the packing} by assigning, in some order, the labels $1$, $2$ and $3$, 
to the three disks corresponding to the vertices of $T$. This specifies a tridisk in $P$.
We refer to the other disks as \defn{unmarked}.
\end{definition}

\begin{definition}
A marked packing $P$,  where the mark-specified tridisk forms
the canonical tridisk, and all of the unmarked
disks lie inside of its ``tricusp'' region is called 
\defn{canonical-tridisk-contained}. We will shorten this to simply
\defn{tridisk-contained}.
\end{definition}

The next two lemmas tells us that we 
can use tridisk-containment as 
an effective modding process.

\begin{lemma}
\label{lem:fix}
Let $P$ be a packing of $n\ge 4$ disks with 
a $3$-connected contact graph $G$ that includes
some triangular face $T$.
Let $P$ be marked, specifying a tridisk.
Then there is a unique generalized M{\"o}bius transformation that maps 
$P$ to a tridisk-contained packing.
\end{lemma}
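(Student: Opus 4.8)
The plan is to reduce everything to the canonical-form machinery already developed for a single tridisk, using the triangular face $T$ to anchor the Möbius normalization and then using $3$-connectedness to rule out degenerate placements of the remaining disks.

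First I would produce a candidate transformation. The marked tridisk in $P$ gives us three mutually tangent disks, and by Corollary~\ref{cor:tri2} there is a unique generalized Möbius transformation $\phi$ that sends this tridisk to the canonical tridisk while sending a chosen side of its incircle to the interior of the canonical incircle. To pin down which side to choose, I would use the fact that $T$ is a triangular face of $G$: the other $n-3 \ge 1$ disks of $P$ must all lie on one specific side of the contact points of $T$ (namely the side ``away'' from the open face $T$), and I would select that side. Applying $\phi$ then maps $P$ to a marked packing whose marked disks form the canonical tridisk. The content of the lemma is that the unmarked disks land inside the tricusp region, i.e. that $\phi(P)$ is tridisk-contained, and that no other generalized Möbius transformation can do this.

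The main obstacle is showing the containment assertion, that after applying $\phi$ every unmarked disk lies inside the tricusp formed by the three canonical disks. Here is where $3$-connectedness must be used, and I expect it to be the crux. The idea is that since $\phi(P)$ is still a packing with the same contact graph $G$, by Lemma~\ref{lem:packemb} the straight-line drawing $(G,\p')$ is a planar embedding, where $\p'$ are the transformed centers. Because $T$ is a face and $G$ is $3$-connected, Whitney's theorem tells us the combinatorial embedding (the set of faces) is determined by $G$ alone, so the three canonical contact points genuinely bound a face of $\phi(P)$ with nothing inside it, while all remaining disks sit in the unbounded complementary region. A Möbius image of a packing can turn a disk into a half-plane or the complement of a disk, but the chosen orientation of the incircle side forces each unmarked disk to map to an honest bounded disk; combining this with the embedding structure confines every unmarked disk to the tricusp. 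I would be careful to handle the possibility that $\phi$ sends some point to infinity, arguing that the point at infinity lands in the chosen exterior region and hence disturbs none of the unmarked disks.

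Finally, for uniqueness, suppose $\psi$ is another generalized Möbius transformation carrying $P$ to a tridisk-contained packing. Then $\psi \circ \phi^{-1}$ is a generalized Möbius transformation fixing the canonical tridisk setwise and preserving the designated interior side of its incircle. By the uniqueness clause of Corollary~\ref{cor:tri2} (applied to the canonical tridisk as input), the only such transformation is the identity, forcing $\psi = \phi$. This closes the argument. The delicate point to double-check is that tridisk-containment really does force the marked disks of $\psi(P)$ to coincide with the canonical tridisk in the correctly labelled order, rather than merely up to a symmetry of the equilateral triangle; this should follow because the marking fixes the indexing and the canonical tridisk was defined with a fixed indexing, so no relabelling symmetry is available.
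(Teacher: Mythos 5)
Your proposal is correct and follows essentially the same route as the paper: normalize the marked tridisk via Corollary~\ref{cor:tri2}, use Lemma~\ref{lem:packemb} and the fact that $T$ is a face to see that all unmarked disks lie on a single side of the tridisk's incircle (which determines the side to select), and derive uniqueness from the uniqueness clause of Corollary~\ref{cor:tri2}. The paper's proof is just a terser version of the same argument.
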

\begin{proof}
From Lemma~\ref{lem:packemb}, $(G,\p)$ is a planar embedding.
Since the tridisk corresponds to a triangular face of $G$, then in a packing, either all of the
other disks lie inside this triangle, or they all lie outside of it.
  The lemma then follows using Corollary~\ref{cor:tri2}.
\end{proof}

\begin{lemma}
\label{lem:fix2}
Let $P$ be a tridisk-contained
marked packing and with contact graph $G$.
If $P$ is the only 
packing with contact graph $G$, 
\textbf{up to a generalized 
M{\"o}bius transformation},
then 
$P$ is the \textbf {only} tridisk-contained  packing \textbf{with this marking} and with 
contact graph $G$.
The converse holds as well.
\end{lemma}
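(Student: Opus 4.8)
The plan is to reduce both implications to the uniqueness half of Lemma~\ref{lem:fix}, which asserts that for a fixed marking there is exactly one generalized M\"obius transformation carrying a given packing into tridisk-contained position. The only structural facts I will need beyond this are that a generalized M\"obius transformation preserves the contact graph (tangency is inversive distance $1$, invariant by Theorem~\ref{thm:invrMob}) and acts on the index set by fixing labels, so that it sends a marked packing to a marked packing with the same marking.

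First I would prove the forward implication. Assume $P$ is the unique packing with contact graph $G$ up to generalized M\"obius transformation, and let $P'$ be any tridisk-contained packing with the same marking and contact graph $G$. By the uniqueness hypothesis there is a generalized M\"obius transformation $\phi$ with $\phi(P')=P$, acting label-by-label on the common index set. Now observe that two generalized M\"obius transformations both carry $P'$ into tridisk-contained position with the given marking: the identity sends $P'$ to itself, and $\phi$ sends $P'$ to $P$, both of which are tridisk-contained with that marking. By the uniqueness clause of Lemma~\ref{lem:fix} applied to $P'$, these two transformations must coincide, so $\phi$ is the identity and $P'=P$. Thus $P$ is the only tridisk-contained packing with the marking, as claimed.

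For the converse I would argue in the opposite order. Assume $P$ is the unique tridisk-contained packing with the given marking and contact graph $G$, and let $Q$ be an arbitrary packing with contact graph $G$. Marking $Q$ with the same labels on the triangular face, Lemma~\ref{lem:fix} produces a generalized M\"obius transformation $\psi$ carrying $Q$ to a tridisk-contained packing $Q'$; since $\psi$ preserves the contact graph and the marking, $Q'$ is a tridisk-contained packing with the given marking and contact graph $G$. The uniqueness hypothesis then forces $Q'=P$, whence $Q=\psi^{-1}(P)$ is generalized-M\"obius-equivalent to $P$. As $Q$ was arbitrary, $P$ is the unique packing with contact graph $G$ up to generalized M\"obius transformation.

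The main obstacle, and the reason the uniqueness clause of Lemma~\ref{lem:fix} does the heavy lifting, is ruling out the one nontrivial ambiguity: a generalized M\"obius transformation fixing the three contact points of the canonical tridisk (as $\phi$ does above) is, by Theorem~\ref{thm:mob3}, either the identity or the inversion through the incircle. I would dispatch the inversion case geometrically---the tricusp region lies on the interior side of the incircle, so inversion sends it to the exterior and cannot preserve tridisk-containment whenever an unmarked disk is present (that is, for $n\ge 4$). This is precisely the orientation bookkeeping already packaged into Lemma~\ref{lem:fix} and Corollary~\ref{cor:tri2}, so invoking that lemma lets me avoid re-deriving it; I only need to verify that its standing hypotheses (a $3$-connected $G$ with a triangular face and $n\ge 4$) are in force, the degenerate case $n=3$ being immediate since then $P$ is the canonical tridisk itself.
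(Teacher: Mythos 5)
Your proof is correct and follows essentially the same route as the paper's: both directions reduce to the uniqueness clause of Lemma~\ref{lem:fix}, with the forward implication obtained by noting that the identity must be the unique normalizing transformation of a tridisk-contained packing, and the converse by normalizing an arbitrary packing $Q$ and invoking marked uniqueness (the paper phrases this as a contradiction, you argue it directly). Your closing remarks about the incircle inversion and the $n=3$ case match the bookkeeping the paper has already delegated to Corollary~\ref{cor:tri2} and Lemma~\ref{lem:fix}.
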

\begin{proof}
The case with $n=3$ is clear, so we will now assume that $n\ge 4$.

Suppose that $P$ is only packing with contact graph $G$, 
up to a generalized 
M{\"o}bius transformation.
Then, from uniqueness of the transform in Lemma~\ref{lem:fix}, 
it is the only tridisk-contained packing with this contact graph and marking.

For the other direction,
suppose there was a packing $Q$ unrelated to $P$ by a generalized M{\"o}bius 
transformation,
but with the same contact graph, $G$.
Let us take the marking of $P$, and use this to mark $Q$.
Then from Lemma~\ref{lem:fix}, there is a 
generalized M{\"o}bius transformation taking $Q$ to some tridisk-contained marked packing $Q'$.
By the assumption of no-M{\"o}bius-relationship, $Q'$ cannot equal $P$, but both are tridisk-contained with the same 
contact graph and marking.
This contradicts the assumed marked uniqueness of $P$.
\end{proof}

\section{Trilaterated Packings}

Next we describe a simple family of packings where
we can easily argue existence and uniqueness.

\begin{figure}[t]
    \centering
    \includegraphics[width=0.35\textwidth]{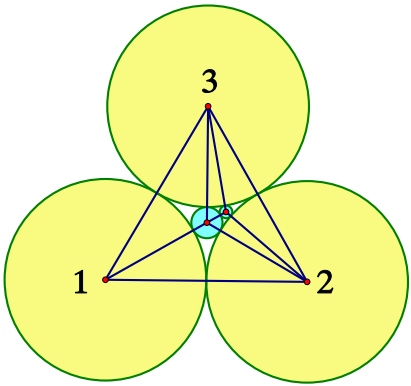}
    \caption{A trilaterated packing of $5$ disks. The outer
    three (yellow) disks form the canonical tridisk.}
    \label{fig:canon}
\end{figure}

\begin{definition}
To create a \defn{trilaterated}   packing 
with $n\ge 3$ vertices  we start with the canonical tridisk.
We then apply $n-3$ subdivision steps. Each subdivision
step adds one packing disk,  externally tangent to exactly
three already-existing disks.
We can create such a packing for any $n\ge 3$.
(See Figure~\ref{fig:canon}.) 
Such a packing is naturally marked by the canonical tridisk, and is tridisk-contained.
\end{definition}

\begin{lemma}
\label{lem:canunq}
Let $P$ be a trilaterated packing, and let $C$ be its contact graph.
Then $P$  is the unique packing with contact graph $C$, up to a
generalized
M{\"o}bius transformation.
\end{lemma}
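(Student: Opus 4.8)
The plan is to prove uniqueness by induction on $n$, the number of disks, mirroring the inductive (subdivision) structure that defines a trilaterated packing. The base case $n=3$ is immediate: a trilaterated packing on three vertices is just the canonical tridisk, and by Lemma~\ref{lem:tri1} any packing whose contact graph is the triangle $C$ is carried to the canonical tridisk by a (generalized) M\"obius transformation, so uniqueness up to such transformations is clear.

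For the inductive step, suppose the claim holds for all trilaterated packings on fewer than $n$ disks. Let $P$ be a trilaterated packing on $n \ge 4$ disks with contact graph $C$, and let $\p_n$ be the center of the \emph{last} disk added by the final subdivision step; by construction this disk is externally tangent to exactly three previously placed disks, say those indexed by a triple $\{i,j,k\}$, and these three were already mutually tangent, so $\{i,j,k\}$ spans a triangle in the contact graph $C' := C \setminus \{n\}$ of the packing $P'$ obtained by deleting disk $n$. By construction $P'$ is itself a trilaterated packing on $n-1$ disks, so by the inductive hypothesis $P'$ is the unique packing with contact graph $C'$ up to a generalized M\"obius transformation. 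Now let $Q$ be any packing with contact graph $C$. Deleting its $n$-th disk yields a packing $Q'$ with contact graph $C'$, so by the inductive hypothesis there is a generalized M\"obius transformation $\phi$ with $\phi(Q') = P'$. Applying $\phi$ to all of $Q$ (which, by the remarks preceding Theorem~\ref{thm:invrMob}, maps packings to packings and preserves tangencies) we may assume $Q' = P'$, i.e. $Q$ agrees with $P$ on its first $n-1$ disks, and it remains only to locate the $n$-th disk.

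The final point is that the $n$-th disk is forced. In $Q$ it must be a disk externally tangent to the three disks indexed by $\{i,j,k\}$; given three fixed mutually tangent disks there are exactly two disks tangent to all three (the two Apollonius/Soddy solutions), one fitting in the inner curvilinear-triangle gap and one on the far outer side. Because $(C,\p)$ is a planar embedding (Lemma~\ref{lem:packemb}) and disk $n$ sits in the triangular face cut out by $\{i,j,k\}$ in exactly the same combinatorial position as in $P$, only one of these two candidates yields a packing with contact graph $C$ rather than one with extra or missing contacts or interior overlaps. Hence the $n$-th disk of $Q$ coincides with that of $P$, so $Q = P$, completing the induction.

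I expect the main obstacle to be the last step: making precise why exactly \emph{one} of the two tangent-disk solutions is combinatorially admissible, and ruling out the possibility that $Q$ has the right contact graph $C$ while placing disk $n$ on the "wrong" side or creating a spurious tangency/overlap. The clean way to handle this is to invoke Lemma~\ref{lem:packemb} to fix the planar embedding, which pins down which face of $(C',\p)$ the new vertex occupies, together with the observation that a disk realizing the three prescribed tangencies in that face is unique. One subtlety to check is that the deletion $Q \mapsto Q'$ indeed yields a \emph{packing} with the full contact graph $C'$ (no contacts among the remaining disks are lost by removing disk $n$), which is immediate since removing a disk cannot destroy tangencies among the others.
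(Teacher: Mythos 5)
Your overall strategy --- peel off the last disk in the subdivision order, induct, and then argue that the last disk is forced by Lemma~\ref{lem:packemb} together with the uniqueness of the disk tangent to three mutually tangent disks inside their tricusp --- is the same as the paper's. The gap is in how you mod out the M\"obius group, and it surfaces exactly at the step you flagged. First, the transformation $\phi$ is determined by the first $n-1$ disks only, so when you apply it to all of $Q$ the image $\phi(Q_n)$ may be a half-plane or the complement of a disk (the paper explicitly warns that M\"obius images of packings can degenerate this way); in that case the ``two Apollonius candidates'' dichotomy and the argument about which face of $(C',\p)$ contains the new center do not apply. Second, and more concretely, the claim that ``only one of these two candidates yields a packing with contact graph $C$'' fails at the first inductive step $n=4$: there $C'=K_3$, whose two faces are bounded by the \emph{same} triangle, so the planar-embedding argument does not single out a face, and for a sufficiently uneven tridisk both Soddy solutions are genuine disks yielding packings with contact graph $K_4$. (They are related by inversion in the incircle, so the lemma itself survives, but your inductive invariant --- that $Q=P$ once $Q'=P'$ --- does not.) The phrase ``in exactly the same combinatorial position as in $P$'' is assuming precisely what needs to be proved.

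The paper's proof avoids both problems by first invoking Lemma~\ref{lem:fix2} to reduce the claim to: $P$ is the unique \emph{tridisk-contained marked} packing with contact graph $C$. After that normalization every unmarked disk is a genuine disk confined to the canonical tricusp, so the outer Soddy candidate is excluded outright (including at $n=4$, where the tricusp of the three neighbors \emph{is} the canonical tricusp), and for later steps a placement inside the canonical tricusp but outside the tricusp of the three neighbors is excluded because it would make $(C,\p)$ a non-embedding, contradicting Lemma~\ref{lem:packemb}. Inserting that reduction at the start of your argument (rather than transporting $Q$ by $\phi$) closes the gap; for $n\ge 5$ your face-counting argument then works because the triangle $ijk$ bounds a unique face of the $3$-connected graph $C'$.
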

\begin{proof}
Using Lemma~\ref{lem:fix2}, all we need to show is that  
$P$ is the
only tridisk-contained
packing with this marking and with contact graph $C$.

The proof then follows by induction starting with the unique canonical tridisk.
In particular, 
assume that $n \ge 4$ and that, 
by induction, the lemma holds for all trilaterated packings
with $n-1$ disks. A trilaterated packing on $n$ disks is obtained 
from a trilaterated packing 
on $n-1$ disks by adding in one final disk.
To result in a packing, the final disk must be placed in the
tricusp of its three neighbors (which are fixed by induction).
(If the new disk center were placed outside of the tricusp of its three contacting circles, 
but inside of the canonical tricusp,
then this would create a non-embedding for 
the straight-line drawing,
$(G,\p)$, violating Lemma~\ref{lem:packemb}.)
There is only one  center and radius for this last disk that 
creates the three required contacts, and is
inside the tricusp of its three neighbors.
\end{proof}

\section{Edge Flips}

\begin{definition}
A \defn{maximal planar graph}
is a planar graph such that adding any extra edge on the same vertex set
results in a non-planar graph.
\end{definition}

A  $3$-connected planar graph with $n\ge 3$ vertices has a well defined
set of faces.  
A maximal planar graph 
is always
$3$-connected, and each of its faces is a triangle.

\begin{figure}[t]
    \centering
   \includegraphics[width=0.55\textwidth]{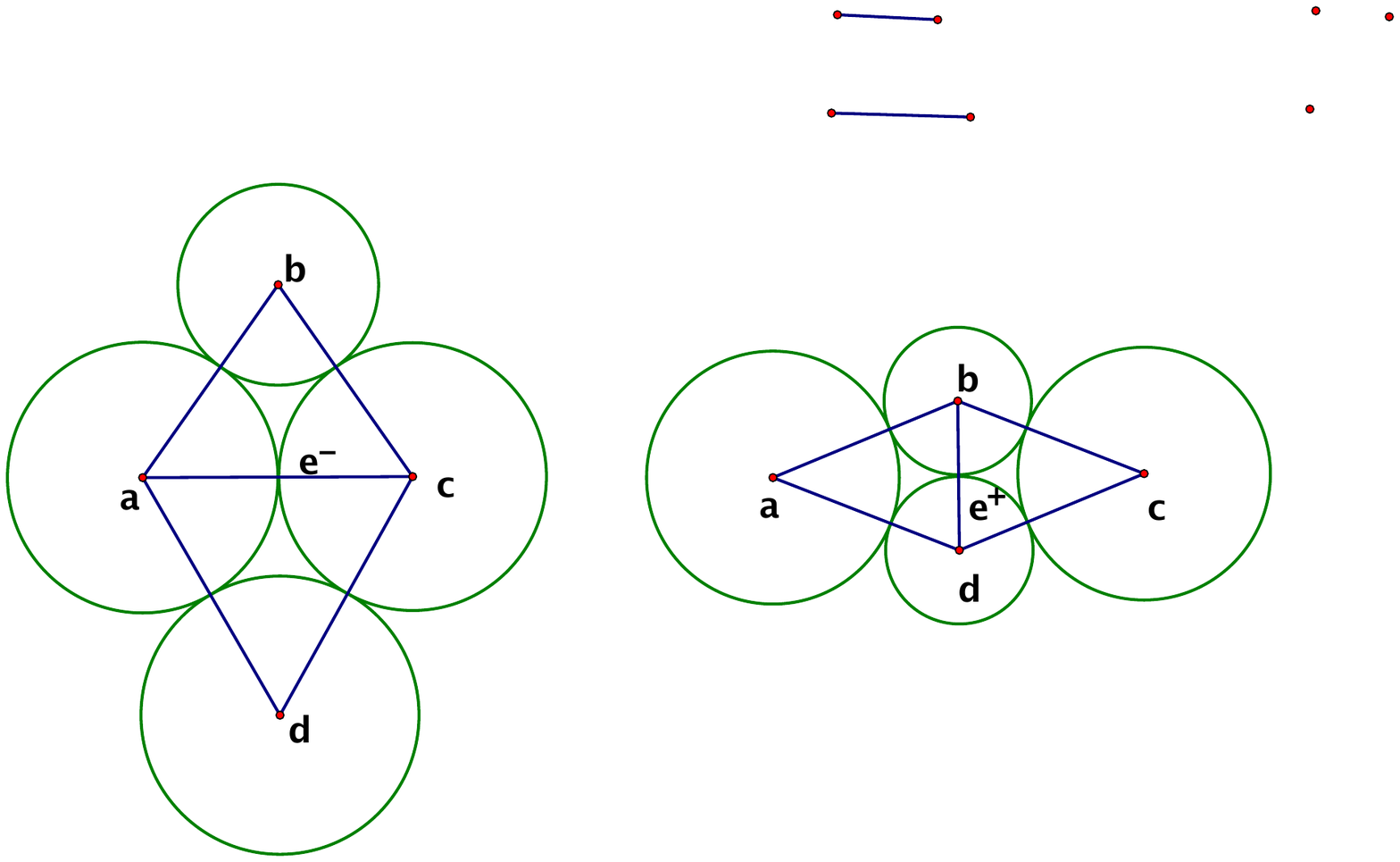}
    \caption{An edge flip.}
    \label{fig:flip}
\end{figure}

\begin{definition}
Let $G$ be a maximal planar graph with $n \ge 5$ vertices.
Let $e^-$ be an edge  of $G$ connecting two vertices denoted by $a$ and $c$.
This edge bounds two triangles in $G$. Denote by $b$ and $d$ the other two
 vertices of these two triangles. 

We say that $e^-$ is \defn{flippable} if there is no edge in $G$ connecting
$b$ and $d$. (See Figure~\ref{fig:flip}.)
\end{definition}

\begin{lemma}\label{lem:3c}
Let $G$ be a maximal planar graph and $e^-$ a flippable edge of $G$.
Then $G^-:=G\setminus e^-$ is $3$-connected.
\end{lemma}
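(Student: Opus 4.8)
The plan is to show that $G^- := G \setminus e^-$ is $3$-connected by arguing that removing any two vertices leaves $G^-$ connected. Recall that $G$ is maximal planar, hence $3$-connected, and $e^-$ connects vertices $a$ and $c$ whose two incident triangles have third vertices $b$ and $d$, with no edge $bd$ present (flippability). Deleting $e^-$ merges the two triangles $abc$ and $acd$ into a single quadrilateral face $abcd$, and leaves all other faces intact. My approach is to leverage the $3$-connectivity of $G$ directly: since $G$ is $3$-connected, for any two vertices $u, v$ the graph $G \setminus \{u,v\}$ is connected, and I want to transfer this to $G^-$.

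First I would fix an arbitrary pair of vertices $\{u,v\}$ and seek to show $G^- \setminus \{u,v\}$ is connected. The only edge of $G$ missing from $G^-$ is $e^- = ac$, so $G^- \setminus \{u,v\}$ differs from $G \setminus \{u,v\}$ by at most the single edge $ac$ (and only when both $a,c \notin \{u,v\}$). Since $G \setminus \{u,v\}$ is connected, the only way connectivity could fail in $G^-$ is if the edge $ac$ were a bridge (cut-edge) of $G \setminus \{u,v\}$. So the crux reduces to showing that $ac$ is never a bridge after deleting any two vertices, equivalently that in $G^-$ itself there is always an alternate path between $a$ and $c$ avoiding any two prescribed vertices.

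The key step is to produce this alternate $a$-to-$c$ connection using the vertices $b$ and $d$. In $G^-$, the edges $ab, bc, cd, da$ all survive (they are the sides of the quadrilateral face), giving two length-two paths $a\!-\!b\!-\!c$ and $a\!-\!d\!-\!c$ between $a$ and $c$. Together with any further connections these provide redundancy. Concretely, if I remove two vertices $\{u,v\}$, at most one of the paths $a\!-\!b\!-\!c$ and $a\!-\!d\!-\!c$ can be destroyed unless $\{u,v\} = \{b,d\}$; in the generic case at least one of these two detours survives, certifying that $ac$ is not a bridge. The one delicate case is exactly $\{u,v\} = \{b,d\}$, where both detours are cut simultaneously.

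The main obstacle will therefore be handling the case $\{u,v\} = \{b,d\}$: here I must show $a$ and $c$ remain connected in $G^- \setminus \{b,d\}$ without using $b$ or $d$. I would argue this by invoking $3$-connectivity of $G$ once more, noting that $G^- \setminus \{b,d\}$ still contains the edge $ac$ (since $a,c \notin \{b,d\}$), so $a$ and $c$ are trivially adjacent there and connectivity of the whole graph $G^- \setminus \{b,d\}$ then follows from connectivity of $G \setminus \{b,d\}$ (which holds as $G$ is $3$-connected) together with the observation that no other edge was removed. Assembling these cases — the generic case using the $b$ and $d$ detours, and the special case $\{u,v\}=\{b,d\}$ using the surviving edge $ac$ — shows $G^- \setminus \{u,v\}$ is connected for every pair, and hence $G^-$ is $3$-connected. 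I expect the bookkeeping about which edges survive each deletion to be the only real subtlety; the underlying idea is simply that the quadrilateral $abcd$ supplies enough redundancy to make $ac$ non-essential.
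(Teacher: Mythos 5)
There is a genuine gap, and it sits exactly where the real content of the lemma lies. Your reduction is sound up to the last case: if $a$ or $c$ is among the two deleted vertices then $G^-\setminus\{u,v\}=G\setminus\{u,v\}$ is connected by $3$-connectivity of $G$, and if $\{u,v\}\neq\{b,d\}$ then at least one of the detours $a\!-\!b\!-\!c$, $a\!-\!d\!-\!c$ survives, so the deleted edge $ac$ was not a bridge of $G\setminus\{u,v\}$. But your treatment of the case $\{u,v\}=\{b,d\}$ rests on the claim that ``$G^-\setminus\{b,d\}$ still contains the edge $ac$.'' It does not: $ac$ \emph{is} the edge $e^-$ that was deleted to form $G^-$, so $a$ and $c$ are certainly not ``trivially adjacent'' there. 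This is the one case you cannot wave away, and it is precisely the case that requires the flippability hypothesis --- which your argument never invokes. The lemma is false without that hypothesis: take $G=K_5$ minus one edge $ax$ (a maximal planar graph) and delete the edge $ac$; the two triangles on $ac$ have apexes $b$ and $d$, the edge $bd$ is present so $ac$ is not flippable, and after deletion the vertex $a$ has degree $2$, so $\{b,d\}$ is a $2$-cut. Any correct proof must therefore use, in the case $\{u,v\}=\{b,d\}$, the absence of the edge $bd$ together with planarity to exhibit an $a$--$c$ path in $G^-\setminus\{b,d\}$; that step is entirely missing.

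For comparison, the paper sidesteps the case analysis by citing a structural fact: an almost-triangulation whose unique non-triangular face is the quadrilateral $abcd$ is $3$-connected unless some edge joins two non-consecutive vertices of that quadrilateral, and both candidate chords are excluded ($ac$ was removed, $bd$ is absent by flippability). Your from-scratch approach is a reasonable and more elementary alternative, but to complete it you would need to supply the planarity argument for the $\{b,d\}$ case --- essentially reproving the cited fact in the one nontrivial instance.
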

\begin{proof}
The graph $G^-$ is a triangulation of the quadrilateral
on vertices $a,b,c,d$.  Such a triangulation must
be $3$-connected unless there is an edge 
connecting two non-consecutive vertices of the
quadrilateral
(e.g. ~\cite{laumond}). But we have removed the edge connecting
$a$ and $c$, and flippability presumes no edge between
$b$ and $d$.
\end{proof}

\begin{definition}
Let $G$ be a maximal planar graph and let $e^-$ be a flippable edge.
Then we call the graph $G^-:= G\setminus e^-$ an 
\defn{almost-maximal} planar
graph.  All of its faces are triangles, except for a single quadrilateral, with
vertices $a$, $b$, $c$ and $d$.
\end{definition}

\begin{definition}
Let $G^-$ be an almost maximal planar graph as above. Let $H$ be the graph
obtained from $G^-$ by adding the edge $e^+$ connecting $b$ and $d$. 
Then we say that $H$ was obtained from $G$ by an \defn{edge flip}
on $e^-$. 
\end{definition}

If $H$ is obtained from $G$ by an edge flip, then $H$ is a maximal
planar graph.

The following theorem~\cite{wagner,sleator} 
lets us use edge flips to navigate between
two graphs (and this can be done
while preserving the vertex indices).
\begin{theorem}
\label{thm:flip}
We can transform any maximal planar graph on $n$ vertices to any other maximal planar graph 
on $n$ vertices
through a finite sequence of edge flips.
\end{theorem}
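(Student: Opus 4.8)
The plan is to prove Theorem~\ref{thm:flip} by establishing that the \emph{flip graph}---whose nodes are the maximal planar graphs (triangulations of the sphere) on a fixed vertex set and whose edges join two such graphs related by a single edge flip---is connected. The standard strategy is to designate a \textbf{canonical target triangulation} $G^*$ and then show that every maximal planar graph $G$ can be transformed into $G^*$ by a finite sequence of flips. Since flips are reversible (flipping $e^-$ to $e^+$ and then flipping $e^+$ back to $e^-$ returns the original graph), connectivity to a common target immediately gives connectivity between any pair $G_1, G_2$: go $G_1 \to G^* \to G_2$ by concatenating the first sequence with the reverse of the second.

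First I would fix a convenient canonical form, following Wagner's classical argument. A natural choice for $G^*$ is a triangulation containing a vertex of maximal degree $n-1$ adjacent to all others---equivalently, a \emph{stacked} or \emph{wheel-like} triangulation whose structure is completely standardized. The core of the argument is an induction that increases the degree of a distinguished vertex $v$: as long as $v$ is not adjacent to every other vertex, one locates a neighbor configuration around $v$ where a suitable diagonal can be flipped so as to create a new edge incident to $v$, strictly increasing $\deg(v)$. Because the degree is bounded above by $n-1$, this process terminates after finitely many flips with $v$ adjacent to all other vertices; the remaining structure on $v$'s link is an $(n-1)$-cycle triangulated as an outerplanar graph, which can then be reduced to a canonical triangulation of that cycle, again by flips. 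Throughout, I would need to verify at each step that the edge being flipped is genuinely flippable in the sense of the paper's definition (its two incident triangles form a quadrilateral with no existing cross-edge), so that the flip is well-defined and keeps us inside the class of maximal planar graphs.

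The hard part will be guaranteeing the existence of a flippable edge that makes genuine progress at each stage, rather than merely shuffling edges without monotone improvement. The subtlety is that an edge bounding two triangles $a,b,c$ and $a,c,d$ may fail to be flippable precisely when $b$ and $d$ are already joined, and one must argue that such obstructions cannot block the degree-increasing step everywhere simultaneously. This is where a careful planarity/Euler-characteristic or local combinatorial argument is needed: one shows that in the triangulated disk forming the link neighborhood of $v$, there is always at least one chord whose flip attaches a new edge to $v$, because a triangulation of a polygon that is not already a fan must contain an ``ear'' diagonal eligible for flipping. Managing the case analysis so that it remains valid for all $n \geq 5$, and confirming that the flips preserve the fixed vertex indexing (as the theorem asserts), constitutes the main bookkeeping burden.

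Finally, I would assemble these pieces: the reduction lemma (every triangulation flips to $G^*$) plus reversibility yields the full statement. I would remark that this is a purely combinatorial result independent of the packing machinery developed earlier in the paper, so no appeal to Lemmas~\ref{lem:fix} or~\ref{lem:canunq} is required; the theorem is invoked only to supply the discrete navigation between contact graphs that the continuous flow of Theorem~\ref{thm:flow} then realizes geometrically. In practice I expect to simply cite the classical sources~\cite{wagner,sleator} for the sharp bounds, presenting the induction sketch above as the conceptual skeleton rather than reproving the optimal flip-distance estimates.
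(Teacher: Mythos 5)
The paper offers no proof of this theorem at all: it is stated as a known result with a citation to \cite{wagner,sleator}, which is exactly what you propose to do in the end. Your sketch of the canonical-form argument (flip every triangulation to a standard one with a dominant vertex, then use reversibility of flips to connect any two) is an accurate outline of the classical proof in those sources, so your proposal is consistent with, and somewhat more detailed than, the paper's treatment.
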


\begin{definition}
  Let $G$ be a maximal planar graph. 
  We call any packing $(\p,\r)$ with contact graph $G$,
    a
    \defn{triangulated packing}.
  Let $G^-$ be an almost-maximal planar graph. 
  We call any packing $(\p,\r)$ with contact graph $G^-$,
    an
    \defn{almost-triangulated packing}.

        Given an almost-maximal packing with contact graph $G^-$,
    we can mark the three disks corresponding to  
    any triangular face of $G^-$ and transform the packing to obtain 
    a tridisk-contained, almost-maximal marked packing.
\end{definition}

\section{Bounding Radii}

  \begin{figure}[h]
    \centering
    \includegraphics[width=0.4\textwidth]{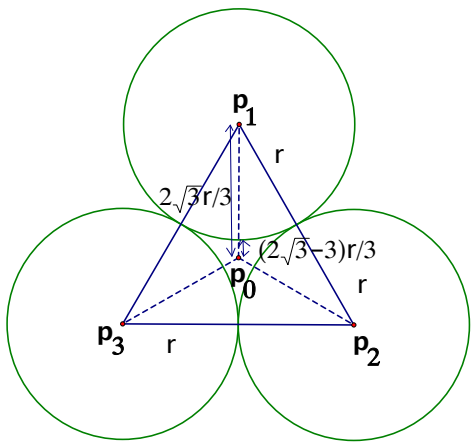}
    \caption{ All the inequalities of Lemma \ref{lemma:tensegrity}
    are equalities here.  Note that the distance from $\p_0$ to the circumference of each of the disks of radius $r$ is 
    $(2/\sqrt{3}-1)r$.
    }
    \label{fig:tensegrity}
\end{figure}

Next we will argue that in a tridisk-contained, almost-triangulated packing, 
all of our radii are bounded away
from $0$. 
  We start with the following:

    \begin{lemma}\label{lemma:tensegrity} There is no configuration of four points, $\p_0, \p_1, \p_2, \p_3$,  in the Euclidean plane and 
    real number $r>0$ with all the following properties: 
    \begin{itemize}
\item    For $i$ and $j$ in $\{1,2,3\}$ with $i\neq j$, 
    we have $|\p_i-\p_j|\ge 2r$. 
\item     For $i \in \{1,2,3\}$, we have  $|\p_i-\p_0| \le (2/\sqrt{3})r$.   
\item At least one of the above $6$ inequalities is strict.
\end{itemize}
  \end{lemma}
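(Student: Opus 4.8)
The plan is to read this as a global rigidity ("super-stability") statement for the tensegrity on the four vertices $\p_0,\p_1,\p_2,\p_3$ in which the three outer pairs $\p_1\p_2,\p_1\p_3,\p_2\p_3$ are \emph{struts} (length $\ge 2r$) and the three spokes $\p_0\p_1,\p_0\p_2,\p_0\p_3$ are \emph{cables} (length $\le (2/\sqrt{3})r$). The extremal configuration $\p$ of Figure~\ref{fig:tensegrity}, namely the equilateral triangle of side $2r$ together with $\p_0$ at its circumcenter (so that every one of the six inequalities is an equality, the circumradius of a side-$2r$ equilateral triangle being exactly $(2/\sqrt{3})r$), will serve as the base configuration. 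I will equip it with an equilibrium self-stress whose stress matrix is positive semidefinite, and then the stress-energy identity will force every competing configuration to meet all six constraints with equality.

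First I would construct the stress. By the threefold symmetry it is natural to place stress $-a$ on each strut and $+b$ on each spoke with $a,b>0$; the signs $-$ on struts and $+$ on cables are exactly what a proper tensegrity stress requires. Equilibrium at $\p_0$ holds automatically because $\p_0$ is the centroid of $\p_1,\p_2,\p_3$, and equilibrium at a vertex, say $\p_1$, reads $-a\big((\p_1-\p_2)+(\p_1-\p_3)\big)+b(\p_1-\p_0)=0$; using $(\p_1-\p_2)+(\p_1-\p_3)=3(\p_1-\p_0)$ this forces $b=3a$. Taking $a=1$, $b=3$ yields a proper equilibrium stress that is nonzero on every member.

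Next I would verify that the associated stress matrix $\Omega$ is positive semidefinite. Ordering the vertices $0,1,2,3$, a direct computation gives $\Omega=vv^{\top}$ with $v=(3,-1,-1,-1)^{\top}$, so $\Omega$ is a rank-one outer product and hence PSD. This single computation is the crux of the whole argument: a merely proper equilibrium stress would only give an infinitesimal (local) obstruction, whereas the semidefiniteness is precisely what upgrades the conclusion to the global, all-configurations statement the lemma asks for.

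Finally I would run the stress-energy argument. Let $E_\omega(\q):=\sum \omega_{ij}|\q_i-\q_j|^2$, summed over the six edges; this is a homogeneous quadratic form whose gradient at the equilibrium configuration $\p$ vanishes, so for any configuration $\q$ one has $E_\omega(\q)-E_\omega(\p)=E_\omega(\q-\p)\ge 0$ by the PSD property. On the other hand $E_\omega(\q)-E_\omega(\p)=\sum \omega_{ij}\big(|\q_i-\q_j|^2-|\p_i-\p_j|^2\big)$, and every summand is $\le 0$: on a strut $\omega_{ij}<0$ while $|\q_i-\q_j|\ge 2r=|\p_i-\p_j|$, and on a spoke $\omega_{ij}>0$ while $|\q_i-\q_0|\le(2/\sqrt{3})r=|\p_i-\p_0|$. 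Hence $E_\omega(\q)-E_\omega(\p)=0$, a sum of nonpositive terms that vanishes, so each term is $0$; since every $\omega_{ij}\ne 0$, all six inequalities are equalities, contradicting the hypothesis that at least one is strict. The only bookkeeping point needing a word of care is that in the equality case the three outer points are automatically distinct with $\p_0$ their circumcenter, but this falls out of the equalities rather than being assumed.
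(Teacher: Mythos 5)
Your proposal is correct and is exactly the stress-energy argument the paper invokes (and leaves to the references \cite{tensegrity, energy}): you supply the explicit proper equilibrium stress $\omega=-1$ on the struts and $+3$ on the cables, verify the stress matrix is the PSD rank-one matrix $vv^{\top}$ with $v=(3,-1,-1,-1)^{\top}$, and run the standard monotonicity-plus-semidefiniteness argument to force all six constraints to be equalities. The computations (circumradius $(2/\sqrt{3})r$, the identity $(\p_1-\p_2)+(\p_1-\p_3)=3(\p_1-\p_0)$, and the sign bookkeeping) all check out, so this is a complete write-up of the proof the paper only sketches.
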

 
  The proof is by constructing a positive semi-definite (stress-energy) function whose global minimum occurs only when the above
  inequalities hold as equalities. 
  The energy is constructed so that for any configuration where the inequalities are  satisfied, the
  energy value is at least as small as this minimum value.
  See \cite{tensegrity, energy} for a discussion for this easy case and many other similar ones.   See Figure~\ref{fig:tensegrity}.

\begin{lemma}
\label{lem:tensegrity2}
Let there be $3$ packing disks, and a point $\p_0$, whose distance to the boundary of each of the disks is less than 
some $\epsilon$. Then at least one of the disks has radius $\le \epsilon/(2/\sqrt{3}-1)= (6.46..)\epsilon < 7\epsilon$.
\end{lemma}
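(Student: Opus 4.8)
The plan is to argue by contradiction using Lemma~\ref{lemma:tensegrity}. Suppose every one of the three packing disks has radius strictly greater than $\epsilon/(2/\sqrt{3}-1)$, and let $r$ denote the smallest of the three radii, so that $\epsilon<(2/\sqrt{3}-1)r$. The three disk centers cannot be fed directly into Lemma~\ref{lemma:tensegrity}: a disk of large radius whose boundary passes within $\epsilon$ of $\p_0$ has its center far from $\p_0$, and so would violate the required bound $|\p_i-\p_0|\le (2/\sqrt{3})r$. The key idea is therefore to replace each given disk by a radius-$r$ sub-disk positioned near $\p_0$, reducing everything to the single common scale $r$ demanded by the lemma.

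Concretely, for each disk $D_i$ (with center $\p_i$ and radius $r_i\ge r$), I would let $b_i$ be the point of its boundary circle closest to $\p_0$; by hypothesis $|b_i-\p_0|=d_i<\epsilon$. Let $D_i'$ be the disk of radius $r$ contained in $D_i$ and internally tangent to it at $b_i$, with center $\q_i$. Since $|\q_i-b_i|=r$, the triangle inequality gives $|\q_i-\p_0|\le r+d_i<r+\epsilon<(2/\sqrt{3})r$, which is exactly the second family of inequalities in Lemma~\ref{lemma:tensegrity}, and moreover each of these holds strictly. This construction is uniform in whether $\p_0$ lies inside, on, or outside $D_i$, since only the near boundary point $b_i$ and the bound $|b_i-\p_0|<\epsilon$ are used.

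For the first family of inequalities, I would observe that $D_i'\subseteq D_i$, so the three disks $D_1',D_2',D_3'$ inherit the disjoint-interior property of the packing; two disks of radius $r$ with disjoint interiors have centers at distance at least $2r$, giving $|\q_i-\q_j|\ge 2r$ for $i\ne j$. Thus the point $\p_0$, the three points $\q_1,\q_2,\q_3$, and the number $r$ satisfy every hypothesis of Lemma~\ref{lemma:tensegrity}, with at least one inequality (in fact all three $\p_0$-inequalities) strict. This contradicts Lemma~\ref{lemma:tensegrity}, so the minimum radius must satisfy $r\le\epsilon/(2/\sqrt{3}-1)=(6.46\ldots)\epsilon<7\epsilon$, which is the assertion.

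The only real subtlety, and the step I expect to be the main obstacle, is precisely this reduction to a common scale: the actual disk centers are unusable, and the whole argument hinges on the observation that shrinking each disk to a radius-$r$ disk tangent at its near point simultaneously keeps the new center within $r+\epsilon<(2/\sqrt{3})r$ of $\p_0$ and preserves pairwise disjointness. Once that replacement is in place, the verification of both inequality families and of strictness is routine.
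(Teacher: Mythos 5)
Your proof is correct and is essentially the paper's own argument: the paper likewise argues by contradiction and shrinks the disks to the common radius $r$ of the smallest one ``while keeping the nearest points to $\p_0$ the same,'' which is exactly your internally-tangent-at-$b_i$ construction, then verifies $|\q_i-\q_j|\ge 2r$ and $|\p_0-\q_i|<\epsilon+r\le(2/\sqrt{3})r$ to contradict Lemma~\ref{lemma:tensegrity}. Your write-up just spells out the containment and triangle-inequality details slightly more explicitly.
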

 \begin{proof}
Suppose that all of the disks had radii $> \epsilon/(2/\sqrt{3}-1)$.
Then we could
shrink the radii of the two largest disks to the radius $r$ of the smallest disk, as in Figure \ref{fig:3-disks}
while maintaining a packing. Let us denote these new  disk centers as $\p_1$, $\p_2$, and $\p_3$.
For  $i$ and $j$ in $\{1,2,3\}$ with $i\neq j$, 
    we would have $|\p_i-\p_j|\ge 2r$. 
Meanwhile
  we  would have, for each $i \in \{1,2,3\}$,
$|\p_0-\p_i| < \epsilon + r \le (2/\sqrt{3}-1)r+r=(2/\sqrt{3})r$. 
This would contradict Lemma~\ref{lemma:tensegrity}.
\end{proof}

\begin{figure}[h]
    \centering
    \includegraphics[width=0.45\textwidth]{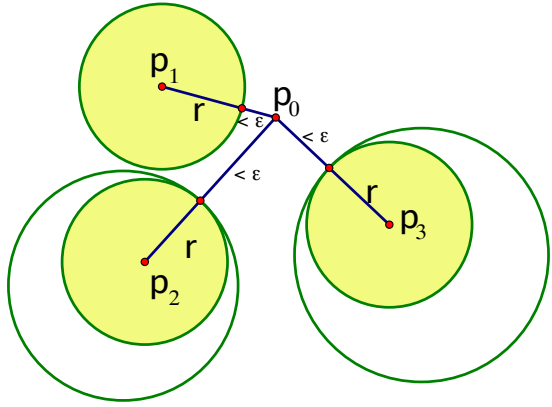}
    \caption{Here the point $\p_0$ is within $\epsilon$ of each of three disks all of radius at least $r$, the radius of the smallest disk.  The other disks have their radii shrunk to $r$, while keeping the nearest points to $\p_0$ the same.  The $\p_i$, are the centers of the shrunken disks. }
    \label{fig:3-disks}
  \end{figure}

And now we can prove our proposition:

\begin{proposition}
\label{prop:nozero}
Let $G$ be a fixed $3$-connected graph on $n$ vertices, (such as an almost-maximal planar graph).
Let $(\p,\r)$ be a packing with 
contact graph $G$. Suppose that at least $3$ of the disks have radii $\ge 1$.
 Then, all of the radii are bounded away
  from $0$ (where the bound depends only on $n$).
\end{proposition}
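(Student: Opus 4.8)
The plan is to reduce the statement to a purely local comparison of the radii of adjacent disks, and then to propagate that comparison through the connected graph $G$, using the three disks of radius $\ge 1$ as a seed. The target is a bound of the shape $r_v\ge c_n^{\,n-1}$ for every disk, with $c_n>0$ depending only on $n$.

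First I would extract the one-sided estimate that Lemma~\ref{lem:tensegrity2} supplies directly. If $(\p_i,r_i)$ and $(\p_j,r_j)$ are tangent then $|\p_i-\p_j|=r_i+r_j$, so $\p_i$ lies at distance exactly $r_i$ from the boundary circle of disk $j$. Taking $\p_0=\p_i$ and letting $\epsilon\downarrow r_i$ in Lemma~\ref{lem:tensegrity2}, among any three neighbors of $i$ at least one has radius at most $C r_i$, where $C:=1/(2/\sqrt3-1)$. Since $G$ is $3$-connected every vertex has degree $\ge 3$, so this reads: each disk has at most two neighbors of radius exceeding $C r_i$. This is the ``no disk is dwarfed by three of its neighbors'' half of a two-sided comparison; note it does \emph{not} forbid a disk from having a single, vastly smaller neighbor.

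The reverse inequality is the crux. Here I would use the planar embedding $(G,\p)$ from Lemma~\ref{lem:packemb}: apart from the three marked disks, each disk $v$ is an interior vertex whose neighbors occur in cyclic order around it, so the angles of the triangular faces at $\p_v$ sum to $2\pi$. Two facts then combine. First, the face angle at $\p_v$ determined by consecutive neighbors of radii $s,s'$ behaves like $(s+s')/r_v$ when the neighbors are small, so the relation $2\pi=\sum(\text{face angles})$ together with $\deg v\le n-1$ forces some neighbor of $v$ to have radius $\gtrsim r_v/(n-1)$. Second, applying the estimate of the previous paragraph at the center of a hypothetical tiny neighbor shows that smallness cannot sit in isolation on the ring — a neighbor much smaller than $r_v$ must be flanked by comparably small ring-neighbors, so tininess would have to propagate around the whole ring, contradicting the existence of a neighbor of size $\gtrsim r_v/(n-1)$. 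Together these give a Ring-Lemma-type bound $r_{\mathrm{nbr}}\ge c_n\, r_v$ for every neighbor of an interior disk.

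With both inequalities available the conclusion is immediate: fix a disk $B$ of radius $\ge 1$; for any disk $v$, connectivity of $G$ yields a path $v=w_0,w_1,\dots,w_\ell=B$ with $\ell\le n-1$, and applying the two-sided comparison across each edge gives $r_v\ge c_n^{\,\ell}\ge c_n^{\,n-1}$, depending only on $n$. I expect the middle step to be the genuine obstacle. Lemma~\ref{lem:tensegrity2} only controls a disk \emph{from above} by its neighbors, so producing the matching lower bound requires the cyclic ring structure and a separate extremal/compactness argument in the spirit of Lemma~\ref{lemma:tensegrity}, plus care at the three boundary disks; this is precisely where the hypotheses that $G$ is $3$-connected and that three disks have radius $\ge 1$ must be used.
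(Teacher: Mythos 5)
Your middle step --- the two-sided comparison $r_{\mathrm{nbr}}\ge c_n r_v$ for every neighbor of an interior disk --- is essentially the Rodin--Sullivan Ring Lemma, and both your routes to it (the angle sum $\sum\theta=2\pi$ over triangular faces at $\p_v$, and ``tininess propagates around the ring'') require every non-boundary vertex to carry a complete closed flower: all faces incident to $v$ triangular, consecutive neighbors mutually tangent. The proposition assumes only that $G$ is $3$-connected. Even in the paper's main application, $G^-$ is almost-maximal and its four quadrilateral vertices $a,b,c,d$ have no closed flower (and these are precisely the disks one most needs to control during the flow); for a general $3$-connected contact graph the remark following the proposition explicitly warns that ``there may not even be a single ring in the packing.'' So the argument as proposed proves a genuinely weaker statement than the one claimed. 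Two smaller points inside the sketch: the face angle at $\p_v$ subtended by tangent neighbors of radii $s,s'$ satisfies $1-\cos\theta=2ss'/\bigl((r_v+s)(r_v+s')\bigr)$, i.e.\ $\theta\asymp\sqrt{ss'}/r_v$, not $(s+s')/r_v$ (your qualitative conclusion survives, but the constant changes); and Lemma~\ref{lem:tensegrity2} applied at a tiny neighbor $u_j$ only forces \emph{one} of the three disks $v,u_{j-1},u_{j+1}$ to be small, so at the next step the lemma may simply re-identify a disk you already know is tiny --- the propagation around the ring can stall, and ``flanked by comparably small ring-neighbors'' is not what the lemma gives.

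The fix for both problems is to run the smallness argument on connected \emph{clusters} rather than single vertices or flowers, which is the paper's proof. Let $M_\epsilon$ be a maximal connected set of vertices whose radii are all $<\epsilon$; its disks fit in a circle of radius $<n\epsilon$, and $3$-connectivity (this is where that hypothesis is actually used) guarantees $M_\epsilon$ has at least three neighbors outside itself, each tangent to some disk of the cluster and hence with boundary within $n\epsilon$ of the circle's center. Lemma~\ref{lem:tensegrity2} then forces one of those external neighbors to have radius $<7n\epsilon$, so the connected small cluster strictly grows when $\epsilon$ is inflated to $7n\epsilon$. Iterating at most $n$ times shows that a single disk of radius $<(7n)^{-n}$ would force all $n$ disks to have radius $<1$, contradicting the three unit disks. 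This version needs no faces, no rings, no angle sums, and treats the marked disks uniformly, whereas your path-propagation step additionally needs the two-sided comparison to hold at every intermediate vertex of the path, including the boundary and quadrilateral vertices you set aside.
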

\begin{proof}

 Define $M_\epsilon$ to be a maximal connected set of vertices where
  all radii are $<\epsilon$ for any chosen $\epsilon$.
  Claim: If $\epsilon < 1$, $M_\epsilon$ is
  non-empty, and $M_{7n\epsilon}$ is a component containing $M_{\epsilon}$,
  then
$|M_{7n\epsilon}|\ge 1+|M_\epsilon|$.
  Proof: Since the disks in $M_\epsilon$
  are connected,
  they fit inside of some circle $C$ of radius $<n \epsilon$. 
From the assumed  $3$ large disks, we know that  
  $|M_\epsilon| \leq n-3$ (ie. this set  is a strict subset of $(\p,\r)$).
  Now we look at
  the neighbors of
  our set $M_\epsilon$ in $G$.
  From 3-connectivity, the set must have at least $3$ neighbors. 
  From Lemma~\ref{lem:tensegrity2}, at least one of these
  disks has radius less than $7n\epsilon$, establishing
  our claim.

  Now, let $\epsilon^* := \frac{1}{(7n)^n}$. If there is any
  disk with radius
  less than $\epsilon^*$, then by iteratively
  applying the previous paragraph, we find that all of disks
  have radii $<1$, contradicting our assumptions.

Finally, an almost maximal planar graph is $3$-connected from Lemma~\ref{lem:3c}.
\end{proof}

\begin{remark}
If we just know that there is at least $1$ large disk,
and we know that it is in the interior of a 
$3$-connected packing,
then we can use the same argument to prove that no other disk can be
arbitrarily small.
(For this we need to  argue that no set of $n-1$
disks, with $n-2$ of them sufficiently tiny can wrap around the known large disk.)
This then gives us a 
generalization of Rodin and Sullivan's "ring-lemma"~\cite{rodin}.
(We only assume $3$-connectivity, and there may not even be a single
``ring'' in the packing.)
\end{remark}

\begin{lemma}
\label{lem:G3}
Let $(\p,\r)$ be a tridisk-contained,  almost-triangulated packing with contact graph $G^-$.
Let  $ij$ be a vertex pair that is not an edge
in $G^-$, and is not $e^-$ or $e^+$. Then disks $i$ and $j$ must remain a bounded distance from each other by a constant depending only on the number
of vertices.
\end{lemma}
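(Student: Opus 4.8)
The plan is to split the statement into an easy upper bound on the separation and the genuinely useful fact that disks $i$ and $j$ cannot be squeezed into near-tangency, the latter being where the hypothesis $ij\neq e^-,e^+$ is actually used.

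First I would dispose of the upper bound. Because $(\p,\r)$ is tridisk-contained, its three marked disks form the fixed canonical tridisk and every unmarked disk lies inside the bounded tricusp of that tridisk; hence all centers $\p_k$ lie in one fixed bounded region of the plane, and in particular $|\p_i-\p_j|$ is bounded above by a universal constant. Combined with Proposition~\ref{prop:nozero}---three of the disks, namely the canonical ones, have unit radius, so every radius is bounded below by some $\rho(n)>0$---this already keeps $i$ and $j$ from drifting apart or shrinking away.

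The substantive step, and the reason the two diagonals $e^-=ac$ and $e^+=bd$ of the single quadrilateral face must be excluded, is to produce a positive lower bound on the gap between disks $i$ and $j$. I would argue by compactness and contradiction: suppose a sequence of tridisk-contained almost-triangulated packings with contact graph $G^-$ had the gap between $i$ and $j$ tending to $0$. By the two bounds above, the centers stay in a fixed bounded set and the radii stay $\ge\rho(n)$, so after passing to a subsequence the packings converge to a limiting disk configuration that is still a packing (disjointness of interiors is a closed condition) with all radii $\ge\rho(n)$ and with disks $i$ and $j$ now tangent. Its contact graph therefore contains $G^-\cup\{ij\}$, which by Lemma~\ref{lem:packemb} must be planar.

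Here the combinatorics finishes the argument. Since $G^-$ is almost-maximal, its only non-triangular face is the quadrilateral $abcd$, so a new edge can be added planarly only between two vertices sharing that face, i.e. only along one of its two diagonals $e^-$ or $e^+$. For any other non-edge $ij$ the drawing $(G^-\cup\{ij\},\p)$ cannot be a planar embedding, contradicting Lemma~\ref{lem:packemb}; a standard compactness argument then upgrades this into a uniform separation bounded below by a constant depending only on $n$. The main obstacle I anticipate is the limiting argument itself: one must check that no interior overlaps are created in the limit and that the emergent tangency genuinely registers as an edge of the contact graph, so that the planarity contradiction applies---this is precisely what the radius bound of Proposition~\ref{prop:nozero} secures, by keeping the limit configuration non-degenerate.
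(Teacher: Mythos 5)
Your proposal is correct, and it shares the paper's combinatorial core: because $G^-$ is $3$-connected (Lemma~\ref{lem:3c}) and its only non-triangular face is the quadrilateral $abcd$, the graph $G^-\cup\{ij\}$ is non-planar for any non-edge $ij$ other than the two diagonals, while Lemma~\ref{lem:packemb} forces the contact graph of any packing to be planar. Where you diverge is in converting ``disks $i$ and $j$ cannot touch'' into ``disks $i$ and $j$ are uniformly separated.'' The paper does this directly and geometrically: in the embedded drawing $(G^-,\p)$ the segment $\p_i\p_j$ must cross some drawn contact edge $kl$, and since Proposition~\ref{prop:nozero} bounds $r_i,r_j,r_k,r_l$ below, the crossing point lies close to several mutually non-overlapping disks, which forces a definite gap along $\p_i\p_j$. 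You instead argue by compactness: a sequence with gap tending to zero has a subsequential limit that is still a packing with all radii bounded below (again by Proposition~\ref{prop:nozero}, applicable because the three marked disks have unit radius) and whose contact graph contains the non-planar $G^-\cup\{ij\}$, a contradiction; finiteness of the set of graphs, markings, and vertex pairs for each $n$ then yields a constant depending only on $n$. Your route is shorter and avoids the slightly delicate geometry at the crossing point, at the cost of being non-constructive (no explicit constant), which is harmless here since the lemma is only used qualitatively in Lemma~\ref{lem:frontcontact}. Both arguments lean on Proposition~\ref{prop:nozero} in an essential way: yours to keep the limit configuration non-degenerate so that the emergent tangency registers as a genuine contact edge, the paper's to make the crossing argument quantitative.
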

\begin{proof}
The graph  $G':=G^-\cup e_{ij}$
  is an  almost-maximal planar graph plus  one extra
  edge. But since $i$ and $j$ are not on some common face of $G^-$,
  the edge between does not subdivide any face of $G^-$ and so 
  $G'$ cannot be planar.
  (Fixing a (topologically unique) embedding of the $3$-connected graph 
  $G'$ on the sphere, 
  any curve starting at $i$ and ending at $j$ must
  enter and then exit one of the faces incident
  to $i$. The crossing at the exit point certifies
  non-planarity.)

  From Lemma~\ref{lem:packemb}, $(G^-,\p)$ is a planar embedding.
  Let us add in the segment connecting $\p_i$ and $\p_j$. From non-planarity, this must cross some (contact) edge $kl$ 
  of $(G^-,\p)$. The radii $r_i,r_j,r_k,r_l$ are 
  all bounded away from zero from 
  Proposition~\ref{prop:nozero}. This keeps disks $k$ and $l$ bounded away from contact.
\end{proof} 

\section{Infinitesimal Rigidity}
\label{sec:jacob}

In this section we prove an infinitesimal rigidity result about
inversive distances on
almost-triangulated packings. This,
along with Proposition~\ref{prop:nonsing},
will be  will be used  
to prove an existence and global rigidity result.

\begin{definition}
 Let $G$ be a maximal planar graph on $n$ vertices.
  Let $f$ be the mapping from $\RR^{3n}$
  to $\RR^{3n-6}$, that measures the \defn{inversive distance} along the
  edges of $G$ of a  disk configuration,
  (We do not require that these
  disks have disjoint
  interiors, but we do require the radii to be positive.)
    Explicitly we have:
\bna
\label{eq:inv}
  f(\p,\r)_{ij}:= 
  \frac{||\p_i-\p_j||^2 -r_i^2 -r_j^2}{2r_ir_j}.
\ena 
\end{definition}

  Let $J$ (at $(\p,\r)$) be the Jacobian of $f$.
  Its row corresponding to an edge $ij$ of $G$
has the following pattern~\cite{bbp}

       \bordermatrix{
        & \cdots & \p_i& \cdots & \p_j & \cdots & r_i & \cdots & r_j \cr 
        & \cdots\, 0\, \cdots & 
        \frac{\p_i - \p_j}{r_ir_j} &  
        \cdots\, 0\,\cdots & 
        \frac{\p_j - \p_i}{r_ir_j} &  
        \cdots\, 0\,\cdots & 
    \frac{r_j^2-r_i^2 - ||\p_i-\p_j||^2}{2r_i^2r_j}
        &
         \cdots\, 0\,\cdots & 
    \frac{r_i^2-r_j^2 - ||\p_i-\p_j||^2}{2r_j^2r_i}
             } 

\noindent where the first row above labels the matrix columns.

\bigskip

When disks $i$ and $j$ have no interior overlap, 
(indeed, when $\p_i$ is not inside of disk $j$)
we can verify that 
\bna  
\label{eq:pos}
    \frac{r_j^2-r_i^2 - ||\p_i-\p_j||^2}{2r_i^2r_j} \le 0.
\ena

When disks $i$ and $j$ are 
in tangential
contact, their inversive distance is $1$
and we have $||\p_i-\p_j||=r_i+r_j$ giving us
\bna  
\label{eq:contact}
    \frac{r_j^2-r_i^2 - ||\p_i-\p_j||^2}{2r_i^2r_j} =
    \frac{-r_i-r_j}{r_ir_j}  = 
    \frac{- ||\p_i-\p_j||}{r_ir_j}.  
\ena  


In this section, 
we will establish the following:
\begin{proposition}
  \label{prop:nonsing}
  Let $G$ and $H$ be maximal planar graphs
  related by flipping an edge $e^-$.
  Let $(\p,\r)$ be a (triangulated or almost-triangulated)
  packing with contact graph of either
  $G$, $G^-$, or $H$.
  Then the Jacobian, $J$, (defined using the edges of $G$)
  has rank $3n-6$.
\end{proposition}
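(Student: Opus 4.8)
\section*{Proof proposal}

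Since $G$ is maximal planar it has exactly $3n-6$ edges, so $J$ is a $(3n-6)\times 3n$ matrix and the claim ``$\rank J = 3n-6$'' is equivalent to the \textbf{rows} of $J$ being linearly independent, i.e.\ to the left null space of $J$ being trivial. (That the rank cannot exceed $3n-6$ is automatic: by Theorem~\ref{thm:invrMob} the inversive distances are invariant under the $6$-dimensional generalized Möbius group, so that group's orbit is tangent to the fibers of $f$ and $\ker J$ is at least $6$-dimensional.) So I would set out to prove: the only \emph{stress} $\omega=(\omega_{ij})_{ij\in E(G)}$ with $\omega^\top J=0$ is $\omega=0$. Reading off the columns of $J$, such an $\omega$ satisfies, at every vertex $i$, a \emph{position equilibrium} $\sum_{j\sim i}\omega_{ij}\,\frac{\p_i-\p_j}{r_ir_j}=0$ and a \emph{radius equilibrium} $\sum_{j\sim i}\omega_{ij}\,\frac{r_j^2-r_i^2-\|\p_i-\p_j\|^2}{2r_i^2r_j}=0$.

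The key is to package these two families into a single planar rigidity statement. Writing $u_{ij}:=(\p_j-\p_i)/\|\p_i-\p_j\|$ for the unit edge direction and setting
\[
F_{ij}:=\omega_{ij}\,\frac{\|\p_i-\p_j\|}{r_ir_j},\qquad
\beta_{ij}:=\frac{r_j^2-r_i^2-\|\p_i-\p_j\|^2}{2r_i\,\|\p_i-\p_j\|},
\]
the position equilibrium becomes $\sum_{j\sim i}F_{ij}\,u_{ij}=0$ and the radius equilibrium becomes $\sum_{j\sim i}F_{ij}\,\beta_{ij}=0$. Because $(\p,\r)$ is a \emph{packing}, every edge of $G$ joins two non-overlapping disks, so by \eqref{eq:pos} and \eqref{eq:contact} we have $\beta_{ij}<0$ \emph{strictly} for every edge (for contact edges $\beta_{ij}=-1$). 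Combining the two displays, the lifted edge vectors $q_{ij}:=(u_{ij},\beta_{ij})\in\RR^3$ satisfy the single dependence $\sum_{j\sim i}F_{ij}\,q_{ij}=0$. Since the $u_{ij}$ are distinct unit vectors and all $\beta_{ij}<0$, the points $q_{ij}$ lie on the cylinder $\{\,|x|=1,\ z<0\,\}$ and are therefore in strictly convex position.

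The plan now follows Cauchy's sign-change method. By Lemma~\ref{lem:packemb} the drawing $(G,\p)$ is a planar embedding, so the neighbors of each vertex have a well-defined cyclic order, which I would take as the order of the $q_{ij}$. The local step is a Cauchy-type lemma: a \emph{nonzero} linear dependence among points in convex position forces the coefficient sequence $(F_{ij})_j$ to change sign at least $4$ times around $i$ (in particular a vertex carrying a nonzero incident stress has at least four nonzero incident edges). The global step is the standard double count: regard the triangulation on the sphere, with the marked canonical tridisk as the outer face, and apply the combinatorial Cauchy bound $\sum_v(\text{sign changes at }v)\le 4V-8$. If $\omega\neq 0$, restricting to the support of $\omega$ gives at least $4$ sign changes at every vertex of positive support-degree, contradicting the $4V-8$ bound. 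Hence $\omega=0$, so $\rank J=3n-6$.

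Three cases must be covered: contact graph $G$ (all $G$-edges are tangencies, so every $\beta_{ij}=-1$), and contact graph $G^-$ or $H$, where the flip edge $e^-=ac$ is the \emph{unique} $G$-edge that is not a tangency — but it is still a non-overlap, so $\beta_{ac}<0$ and the sign hypotheses persist. I expect the main obstacle to be exactly this inhomogeneity together with the boundary bookkeeping: first, proving the local $\ge 4$ sign-change lemma for the lifted points $q_{ij}$ when the heights $\beta_{ij}$ vary (the contact case, with $\beta\equiv-1$, is the classical planar statement, and one must check that strict convex position on the cylinder still yields the four alternations); and second, adapting the global count to the almost-triangulated cases, where $(G^-,\p)$ has one quadrilateral face $abcd$ and the $G$-edge $e^-$ appears as its diagonal, so one must verify the embedding/orientation of that diagonal and use the quadrilateral-face version of the Cauchy bound. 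Everything else is routine sign checking via \eqref{eq:pos}–\eqref{eq:contact} and the radius bounds of Proposition~\ref{prop:nozero}.
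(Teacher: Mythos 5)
Your overall architecture is the same as the paper's: show the cokernel of $J$ is trivial by converting a cokernel vector into a stress satisfying the position and radius equilibrium conditions at each vertex, prove a local lower bound on sign alternations around each vertex, and contradict the global Cauchy bound $\sum_i I_i \le 4n'-8$ of Lemma~\ref{lem:topo}. But the step you defer as ``the main obstacle'' is exactly the crux, and in the form you propose it does not go through. Your local lemma asserts that at \emph{every} vertex carrying nonzero stress the lifted points $q_{ij}=(u_{ij},\beta_{ij})$ force at least $4$ sign changes because they are ``in strictly convex position'' on the half-cylinder. Convex position (each point being a vertex of the hull) is not the right hypothesis: the four-alternation conclusion needs the cyclic order of the $q_{ij}$ to be radially separable arc-by-arc, which holds when all heights are equal (the tangency case $\beta\equiv-1$, where the points lie on a circle and the paper's cone-bisector projection in Lemma~\ref{lem:geom} applies) but can fail when the heights vary. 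At the endpoints $a$ and $c$ of $e^-$ one incident $G$-edge has $\beta<-1$ while the rest have $\beta=-1$, and one can then build a nonzero dependence $\sum F_{ij}q_{ij}=0$ with only two sign changes (take a narrow arc of directions whose cone is contained in the cone of the complementary arc in $\RR^2$, and use the differing heights to match the third coordinate). A second issue you do not address: when the contact graph is $G^-$ or $H$, the drawn cyclic order of the edges at $a$ and $c$ in $(G,\p)$ need not agree with the combinatorial order of $G$ used in Lemma~\ref{lem:topo}, so even a correct convexity argument could not be applied there.

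The paper's resolution is to \emph{not} prove the $\ge 4$ bound at those vertices. It isolates the at most two non-``$G$-like'' disks (Lemma~\ref{lem:2oo}: only the endpoints of the one non-tangent, possibly out-of-order $G$-edge can be affected), proves $I_i\ge 4$ only at $G$-like vertices via the equilibrium/cone argument (Lemma~\ref{lem:geom}), and at the remaining two vertices settles for $I_i\ge 2$, which follows order-independently from the strict negativity of all radius-column entries \eqref{eq:pos} (Lemma~\ref{lem:geom0}: if all nonzero signs agreed, the radius equilibrium could not hold). The count $4(n'-2)+2\cdot 2=4n'-4>4n'-8$ still yields the contradiction. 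This slack in the global bound is the idea your proposal is missing; without it, you are committed to a local statement that is false at $a$ and $c$.
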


Our argument in this section is a variation on ideas explored in~\cite{sticky,bbp}. 
The idea is to show that $J$ 
cannot have any non-zero cokernel
vector.
The main extra difference here is that
when the contact graph is $G^-$ or $H$, then the edges of $G$
are not in contact. We will argue that since $G^-$ and $H$ are
similar enough to $G$, this will not destroy the argument.

\begin{definition}
Given a 3-connected  planar graph $G$ with $n$ vertices and $m$ edges,
and given  a 
vector $\v$ in $\RR^{m}$, we assign 
a sign  $\{+,0,-\}$ to each undirected edge $ij$ using 
the sign of $\v_{ij}$. This assignment gives us an
associated \defn{sign vector} $\s$.
\end{definition}

\begin{definition}
Given a sign vector $\s$, 
we define the \defn{index} $I_i$ at vertex $i$, as 
the number of times the sign changes as we traverse the 
edges in order around vertex $i$, ignoring zeros.
(We use the unique face structure of a 3-connected planar graph to 
define the cyclic ordering of edges at each vertex.)
\end{definition}

The index $I_i$ is always even.

The following is a variation on
Cauchy's index lemma, which can be proven using Euler's 
formula. For a proof, see e.g.,~\cite[Lemma 5.2]{gluck}
or~\cite[Page 87]{AlexConv}.
\begin{lemma}
\label{lem:topo}
Let $G$ be a planar 3-connected graph  with $n$ vertices. Let
$\s$ be a  sign vector in $\RR^{m}$. Let $n'$ be the number of vertices that
have at least one non-zero signed edge.
Then $\sum_i I_i \leq 4n'-8$. 
\end{lemma}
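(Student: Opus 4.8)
The plan is to prove Lemma~\ref{lem:topo} by a counting argument that relates the total index $\sum_i I_i$ to the face structure of the graph, using Euler's formula as the backbone. The key idea behind this variation on Cauchy's index lemma is that sign changes around vertices correspond geometrically to sign changes around faces, and both can be bounded simultaneously. First I would restrict attention to the subgraph induced by the non-zero signed edges, so that we are effectively working with a graph on $n'$ vertices. The zero edges can be deleted for the purposes of counting sign changes, since the index $I_i$ ignores zeros by definition; what matters is whether this deletion preserves enough structure to apply Euler's formula, which is the first technical point to handle carefully.

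The main step is a double-counting of sign changes. For each vertex $i$, the index $I_i$ counts the sign changes in the cyclic sequence of (non-zero) edge signs around $i$. Dually, for each face $F$, I would define an analogous face-index $I_F$ counting the sign changes as one traverses the boundary edges of $F$ in cyclic order. The crucial observation is that each sign change at a vertex is ``witnessed'' by a corner of a face, and conversely, so that summing $I_i$ over vertices and summing $I_F$ over faces count closely related quantities. Because each index is even and at least $2$ whenever it is nonzero (a cyclic sequence with at least one nonzero sign either has no sign changes or at least two), one gets lower bounds of the form $I_i \ge 0$ with the understanding that vertices contributing nothing can be set aside. The combinatorial heart is to show $\sum_i I_i + \sum_F I_F$ is controlled by the number of edges, and then to feed this into Euler's relation $V - E + F = 2$ for the (possibly reduced) planar graph.

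The cleanest route, which I would follow, mirrors the standard Cauchy proof: observe that each triangular or polygonal face can contribute at most a bounded number of sign changes to the alternation count, and that faces bounding only non-zero edges with an odd effective structure are excluded by parity. Concretely, one bounds $\sum_F I_F$ by noting each face of degree $d_F$ has $I_F \le d_F$ with the alternation being even, then combines $\sum_i I_i \le \sum_F (\text{contribution})$ with $\sum_F d_F = 2E$ and $E \le 3V - 6$ type inequalities coming from planarity. Tracking only the $n'$ active vertices and using $3$-connectivity to ensure the face structure is well-defined (as granted earlier in the excerpt) yields the target bound $\sum_i I_i \le 4n' - 8$. I would cite~\cite{gluck} or~\cite{AlexConv} for the detailed bookkeeping rather than reproduce it.

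The hard part will be handling the reduction to the active subgraph cleanly: after deleting zero edges, the resulting graph need not be $3$-connected or even have all faces simply connected, so the face structure used to define the cyclic order may degenerate. The honest fix is to keep the cyclic edge-order inherited from the original $3$-connected embedding of $G$ (which is topologically unique on the sphere, as used earlier) rather than re-embedding the reduced graph, and then argue that sign changes counted in this fixed cyclic order still satisfy the Euler-type inequality. Getting this interface between the fixed embedding of $G$ and the counting on the active vertices exactly right — so that the bound sharpens to $4n'-8$ rather than a weaker $4n-8$ — is the step I expect to require the most care, and it is precisely where the cited references do the delicate work.
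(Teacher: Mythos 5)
Your proposal is correct and follows essentially the same route as the paper: the paper does not prove Lemma~\ref{lem:topo} itself, noting only that it ``can be proven using Euler's formula'' and deferring to \cite[Lemma 5.2]{gluck} and \cite[Page 87]{AlexConv}, which carry out exactly the vertex--face double counting of sign changes and the per-face bound ($I_F \le 2d_F - 4$ for a face of degree $d_F$, using evenness of $I_F$) that you outline. Your flagged difficulty --- restricting to the $n'$ active vertices while keeping the rotation system inherited from the $3$-connected embedding of $G$ --- is indeed the only delicate point, and it is precisely the bookkeeping done in the cited sources.
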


\begin{lemma}
\label{lem:geom0}
Let $G$ be a maximal planar graph on $n$ vertices.
Let $(\p,\r)$ be a configuration of $n$ disks. 
Let $\v$ be a non-zero cokernel vector of $J$, giving us an associated sign vector.
Let '$i$' be the index of a disk that  has no interior overlap with its neighbors
in $G$ and such that it has at least one non-zero signed edge in $\v$.
Then its associated index  $I_i$
is at least $2$.
\end{lemma}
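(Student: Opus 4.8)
The plan is to analyze the cokernel condition locally at vertex $i$ and combine it with the sign patterns of the Jacobian entries recorded in the row template. Let $\v$ be the given nonzero cokernel vector, so that $\trans{\v}J=0$. Reading off the columns of $J$ that correspond to the position $\p_i$ and the radius $r_i$, the cokernel equations split into a \emph{geometric} (position) equation and a \emph{radial} equation at each vertex. At vertex $i$ these read
\begin{align*}
\sum_{j\sim i}\v_{ij}\,\frac{\p_i-\p_j}{r_ir_j} &= 0,\\
\sum_{j\sim i}\v_{ij}\,\frac{r_j^2-r_i^2-\|\p_i-\p_j\|^2}{2r_i^2r_j} &= 0,
\end{align*}
where $j\sim i$ ranges over the neighbors of $i$ in $G$. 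The second equation is the one I would exploit: by the hypothesis that disk $i$ has no interior overlap with its neighbors, each coefficient $\frac{r_j^2-r_i^2-\|\p_i-\p_j\|^2}{2r_i^2r_j}$ is $\le 0$ by inequality~\eqref{eq:pos}, and in fact strictly negative unless $\p_i$ lies on the boundary circle of disk $j$.

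First I would establish that the index $I_i$ cannot be $0$. The index is always even, so it suffices to rule out $I_i=0$, i.e.\ to rule out the case where all nonzero signed edges at $i$ carry the same sign. Suppose for contradiction that every nonzero $\v_{ij}$ has (say) the same sign, nonnegative, with at least one strictly positive (this is where the hypothesis that $i$ has a nonzero signed edge enters). Then in the radial equation each term $\v_{ij}\cdot(\text{nonpositive coefficient})$ is nonpositive, and at least one term is strictly negative, since the strictly-positive $\v_{ij}$ is multiplied by a strictly negative coefficient. Here I must be careful that the relevant coefficient is genuinely strictly negative rather than zero; the no-interior-overlap assumption gives $\le 0$, and strictness follows because equality in~\eqref{eq:pos} would force $\p_i$ onto disk $j$'s boundary circle in a degenerate way, which I would argue cannot make the whole sum vanish. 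The sum of nonpositive terms with at least one strictly negative term is strictly negative, contradicting the radial cokernel equation $=0$. The symmetric argument handles the all-nonpositive case. Hence not all nonzero signs at $i$ agree, so there is at least one sign change around $i$, and by evenness $I_i\ge 2$.

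The main obstacle is the strictness issue in the radial coefficient: inequality~\eqref{eq:pos} only yields $\le 0$, so I need to rule out the borderline configuration in which every strictly-signed neighbor $j$ contributes a \emph{zero} coefficient (which happens exactly when $\p_i$ lies on the circle bounding disk $j$). I expect to dispose of this by observing that such degeneracy would require $\p_i$ to sit simultaneously on the boundaries of all the signed neighbor disks while disk $i$ still packs against them, and the no-overlap condition together with the positivity of radii forces the coefficient to be strictly negative whenever disk $i$ is a genuine packing disk rather than a degenerate point. Thus the sum cannot vanish termwise, the radial equation forces a genuine sign change, and the index bound $I_i\ge 2$ follows.
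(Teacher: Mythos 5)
Your proposal is correct and is essentially the paper's own (one-sentence) proof: the cokernel equation on the $r_i$ column cannot hold if all nonzero signs at vertex $i$ agree, because the entries of that column are nonpositive by~(\ref{eq:pos}). The strictness issue you spend a paragraph on is moot, and your characterization of the equality case is off: the numerator $r_j^2-r_i^2-\|\p_i-\p_j\|^2$ vanishes only when $\p_i$ lies strictly \emph{inside} disk $j$ (at distance $\sqrt{r_j^2-r_i^2}<r_j$ from $\p_j$), not on its boundary, whereas no interior overlap gives $\|\p_i-\p_j\|\ge r_i+r_j>r_j$ and positive radii then make the coefficient strictly negative outright.
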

\begin{proof}
If vertex $i$ has at least one non-zero sign and no sign changes, then it 
cannot satisfy 
Equations~(\ref{eq:pos}) while having $\v$
annihilate the column of $J$ corresponding to
$r_i$.
\end{proof}

\begin{definition}
Let $G$ be any $3$-connected planar graph on $n$ vertices and $\p$ a configuration of $n$   points.
  Let $(G,\p)$  be its straight line drawing
  (not necessarily an embedding). We say that
  a vertex has \defn{out of order edges}
  if the drawn (either clockwise or counterclockwise) cyclic order 
  of its adjacent edges
  does not match
  their combinatorial cyclic order in $G$. 
  \end{definition}

\begin{definition}
Let $G$ be a maximal planar graph on $n$ vertices.
Let $(\p,\r)$ be a configuration of $n$ disks.
We call a disk with index `$i$' \defn{$G$-like}
if the edges of vertex $i$ in $(G,\p)$
are in order  and if for all of the edges
$ij$ in $G$ incident to
vertex $i$, the disks $i$ and $j$
are in tangential contact in $(\p,\r)$.
\end{definition}

\begin{lemma}
\label{lem:2oo}
Let $G$ be a maximal planar graph.
  Let $(\p,\r)$ be a triangulated or 
   almost-triangulated packing with contact graph
  $G$, $G^-$ or $H$. 
  Then there are at most two  
  disks that are not $G$-like.

\end{lemma}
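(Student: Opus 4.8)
The plan is to argue by comparing the edge set of $G$ with that of the contact graph $\Gamma$ of the packing, where $\Gamma \in \{G, G^-, H\}$, and to exploit the fact that the two graphs differ only on the diagonals of the quadrilateral $abcd$. By Lemma~\ref{lem:packemb}, $(\Gamma,\p)$ is a planar embedding, so at every vertex the drawn cyclic order of the incident edges matches the combinatorial order in $\Gamma$. I will leverage this planarity to verify both defining conditions of $G$-likeness (edges in order in $(G,\p)$, and every incident $G$-edge realized as a tangential contact) for all but at most two vertices.

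The key structural fact is that $G$ contains the diagonal $e^- = ac$, whereas $\Gamma$ contains instead the opposite diagonal $e^+ = bd$ (when $\Gamma = H$) or neither diagonal (when $\Gamma = G^-$); these are the only edges on which $G$ and $\Gamma$ can disagree. Hence the only vertices whose $G$-neighborhood differs from their $\Gamma$-neighborhood lie in $\{a,b,c,d\}$. For every vertex $i \notin \{a,b,c,d\}$, the $G$-edges and the $\Gamma$-edges at $i$ coincide, so planarity of $(\Gamma,\p)$ yields "edges in order" at $i$ in $(G,\p)$, and since every $\Gamma$-edge is a contact by definition of the contact graph, every $G$-edge at $i$ is a contact as well; thus each such $i$ is $G$-like.

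It then remains to treat $a,b,c,d$. For $b$ and $d$ the only possible discrepancy is the loss of the edge $bd$, which occurs only when $\Gamma = H$; since deleting an edge from an in-order cyclic arrangement leaves the survivors in order (removing the same element from the drawn and the combinatorial orders preserves their match), and since every remaining $G$-edge at $b$ (resp.\ $d$) is still a $\Gamma$-edge and hence a contact, both $b$ and $d$ are $G$-like. The vertices $a$ and $c$ are the genuine exceptions: when $\Gamma \neq G$, the edge $ac \in G$ is not realized as a contact, so the contact condition of $G$-likeness fails at both $a$ and $c$, and these (at most) two disks are the only ones that may fail to be $G$-like. When $\Gamma = G$ all edges of $G$ are contacts and $(G,\p)$ is itself a planar embedding, so every disk is $G$-like and there are no exceptions.

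The bookkeeping is routine; the one delicate point is the transfer of the "in order" property between $(\Gamma,\p)$ and $(G,\p)$. The main obstacle is to confirm rigorously that the edge discrepancies between $G$ and $\Gamma$ are confined to $\{a,b,c,d\}$ and that among these only $a$ and $c$ acquire a $G$-edge (namely $ac$) that is not a contact. This confinement is what lets us avoid reasoning about the drawn order at $a$ and $c$ altogether (the contact condition already fails there), while the only edge deletions, $bd$ at $b$ and $d$, trivially preserve order.
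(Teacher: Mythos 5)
Your proposal is correct and follows essentially the same route as the paper: both arguments reduce to the common subgraph $G^-$ (equivalently, the observation that $G$ and the contact graph differ only on the diagonals of the quadrilateral $abcd$), use Lemma~\ref{lem:packemb} to get the in-order and contact conditions everywhere from planarity of the embedded contact graph, and conclude that the lone extra edge $e^-=ac$ of $G$ can spoil $G$-likeness only at its two endpoints. Your treatment of $b$ and $d$ is just a more explicit spelling-out of what the paper compresses into ``the graph $G$ includes only one more edge, which can affect only two disks.''
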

\begin{proof}
The contact graph $G'$ of
$(\p,\r)$ is one of $G$, $G^-$ or $H$,
So Lemma~\ref{lem:packemb}  
tells us that $(G',\p)$ is embedded.
$G^-$ is a subgraph of $G'$
so $(G^-,\p)$ is also embedded. It is $3$-connected 
by Lemma~\ref{lem:3c}.
Thus all of the edges of $(G^-,\p)$ are 
 drawn
in order and have disks with tangential contact.
The graph $G$ includes only one more edge,
which can affect only two disks.
\end{proof}

\begin{definition}
Given a graph $G$ with $n$ vertices
and $m$ edges, and a configuration $\p$ of $n$ points.
A vector
$\omega \in R^{m} $ 
satisfies the \defn{equilibrium condition} at vertex $i$ if
\begin{eqnarray}
\label{eq:coker1}
\sum_j \omega_{ij} (\p_i-\p_j)&=&0\\
\label{eq:coker2}
\sum_j \omega_{ij} \|\p_i-\p_j\|&=&0.
\end{eqnarray}
The sums in \eqref{eq:coker1}--\eqref{eq:coker2}
are over neighbors $j$ of $i$ in $G$.
We use the index ${ij}$ to index
one of the $m$ edges of $G$.
\end{definition}
(This notion was studied in~\cite{sticky,bbp}.
See also~\cite{lam2}.)

\begin{lemma}
\label{lem:cokeq}
Let $G$ be a maximal planar graph on $n$ vertices.
Let $(\p,\r)$ be a configuration of $n$ disks.
Let $\v$ be in the cokernel of $J$ (defined using the edges of $G$).
Let us define the \defn{
associated stress vector} $\omega$ as
$\omega_{ij} := r_ir_j \v_{ij}$.
Suppose disk $i$ has  tangential
contact with its neighbors in $G$.
Then $\omega$ satisfies the equilibrium condition at vertex $i$.
\end{lemma}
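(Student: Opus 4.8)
The plan is to read the cokernel equation $\trans{\v} J = 0$ one column-block at a time. Saying that $\v$ is a cokernel vector of $J$ means precisely that, for every column of $J$ (i.e. every scalar variable among the $\p_k$ and $r_k$), the $\v$-weighted sum of the edge-rows vanishes. Only edges incident to vertex $i$ carry a nonzero entry in the columns associated with $\p_i$ or with $r_i$, so each of these column-blocks yields a single identity summed over the neighbors $j$ of $i$ in $G$. First I would record that the displayed row pattern of $J$ is symmetric under swapping the two endpoints of an edge: the $\p_i$-block entry is $(\p_i-\p_j)/(r_ir_j)$ and the $r_i$-column entry is $(r_j^2-r_i^2-\|\p_i-\p_j\|^2)/(2r_i^2r_j)$ no matter which endpoint is listed ``first,'' so the per-vertex sums are unambiguous.

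For the translational equation \eqref{eq:coker1} I would simply collect the two columns belonging to $\p_i$. This gives $\sum_{j\sim i}\v_{ij}\,(\p_i-\p_j)/(r_ir_j)=0$, so the edge $ij$ enters with weight $\v_{ij}/(r_ir_j)$. That weight is exactly the stress $\omega_{ij}$ appearing in the equilibrium conditions (its definition is arranged precisely to absorb the common $r_ir_j$ factor sitting in the Jacobian entries), and substituting it turns the identity into $\sum_j\omega_{ij}(\p_i-\p_j)=0$. Notably this step uses no geometric hypothesis whatsoever: it holds for any disk configuration with $\v$ in the cokernel of $J$, contact or not.

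The contact hypothesis enters only for the scalar equation \eqref{eq:coker2}, which I would extract from the single column belonging to $r_i$. There the cokernel condition reads $\sum_{j\sim i}\v_{ij}\,(r_j^2-r_i^2-\|\p_i-\p_j\|^2)/(2r_i^2r_j)=0$. Because disk $i$ is in tangential contact with each of its neighbors $j$ in $G$, I can invoke identity~\eqref{eq:contact}, which is exactly the statement that under contact the bracketed $r_i$-entry collapses to $-\|\p_i-\p_j\|/(r_ir_j)$. Substituting this for every neighbor, the relation becomes $-\sum_{j\sim i}\v_{ij}\,\|\p_i-\p_j\|/(r_ir_j)=0$; recognizing the weight $\v_{ij}/(r_ir_j)$ as $\omega_{ij}$ again and discarding the harmless global sign gives $\sum_j\omega_{ij}\|\p_i-\p_j\|=0$, which is \eqref{eq:coker2}.

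I expect no analytic difficulty: the lemma is a reorganization of the rows of $J$ together with one algebraic substitution. The single point that deserves care — the ``main obstacle'' in spirit — is recognizing that tangential contact is precisely the hypothesis that makes the $r_i$-column reduce to a multiple of the edge length $\|\p_i-\p_j\|$ via~\eqref{eq:contact}; this is the unique place where $\|\p_i-\p_j\|=r_i+r_j$ is used, and without it only the translational relation \eqref{eq:coker1} would survive. It is also worth checking that $\omega_{ij}=\omega_{ji}$, so that the same edge weight serves the equilibrium at both endpoints, consistent with its reading as a stress.
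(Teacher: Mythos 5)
Your proposal is correct and is precisely the elaboration that the paper's one-line proof gestures at: read $\trans{\v}J=0$ on the $\p_i$-columns to get the vector equation, and on the $r_i$-column, using the contact identity~\eqref{eq:contact}, to get the scalar equation. One caveat: the weight that actually emerges from your (correct) computation is $\v_{ij}/(r_ir_j)$, whereas the lemma literally defines $\omega_{ij}:=r_ir_j\v_{ij}$; these differ by the per-edge factor $(r_ir_j)^2$, so the equilibrium condition holds for the former but not in general for the latter, and your parenthetical claim that the stated definition ``absorbs'' the Jacobian's $r_ir_j$ factor is exactly backwards relative to the displayed formula. This is evidently a typo in the paper rather than a flaw in your argument (everything downstream, in particular Lemma~\ref{lem:geom}, uses only the signs of $\omega$ and the existence of some equilibrium stress with those signs, and both scalings share the signs of $\v$), but you should state explicitly that you are taking $\omega_{ij}=\v_{ij}/(r_ir_j)$ rather than silently identifying it with the printed definition.
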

\begin{proof}
This follows 
from nature of the entries in the
associated columns of $J$ and
using 
Equation (\ref{eq:contact}).
\end{proof}

\begin{lemma}[\cite{sticky}]
\label{lem:geom}
Let $G$ be a maximal planar graph on $n$ vertices.
Let $(\p,\r)$ be a configuration of $n$ disks.
Let $\v$ be a cokernel vector of $J$ giving us an associated sign vector.
Let $i$ be a $G$-like disk.
 Then the index $I_i$ 
is at least $4$.
\end{lemma}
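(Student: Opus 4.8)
The plan is to lift the local picture at vertex $i$ into $\RR^3$ and run a Cauchy-type sign-change count there, using the extra scalar equilibrium equation to upgrade the planar bound of $2$ to $4$.

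First I would record what the $G$-like hypothesis buys us. Since $i$ is $G$-like, every neighbor $j$ of $i$ in $G$ is in tangential contact with $i$, so Lemma~\ref{lem:cokeq} applies and the associated stress $\omega_{ij}=r_ir_j\v_{ij}$ satisfies both equilibrium equations \eqref{eq:coker1}--\eqref{eq:coker2} at $i$. Because $r_ir_j>0$, the sign of $\omega_{ij}$ agrees with that of $\v_{ij}$, so $\omega$ and $\v$ induce the same signs and hence the same index $I_i$. Moreover, the ``in order'' half of the $G$-like hypothesis guarantees that the combinatorial cyclic order of the edges at $i$ (used to define $I_i$) coincides with the geometric angular order of the directions $\p_j-\p_i$ about $\p_i$; hence $I_i$ literally counts the sign changes of $\omega$ as we sweep the edges by angle. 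As in Lemma~\ref{lem:geom0}, I may assume $i$ has at least one nonzero incident signed edge, since otherwise $I_i=0$ and there is nothing to assert.

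Next comes the lift. For each neighbor $j$ set $\W_j:=(\p_j-\p_i,\ \|\p_j-\p_i\|)\in\RR^3$. By construction each $\W_j$ lies on the convex cone $\{(x,y,z):z=\sqrt{x^2+y^2}\}$ with $z$-coordinate $\|\p_j-\p_i\|>0$, and the two equilibrium equations combine into the single vector identity $\sum_j\omega_{ij}\W_j=0$ in $\RR^3$. The crucial elementary observation is that any linear functional $L(x,y,z)=ax+by+cz$, restricted to the cone and written in the angular parameter $\theta$ of $\p_j-\p_i$, becomes $\rho\,(\sqrt{a^2+b^2}\cos(\theta-\phi)+c)$ with $\rho>0$; as a shifted cosine this changes sign at most twice around the circle. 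Conversely, any sign pattern with at most two angular sign changes can be realized as the sign pattern of some such $L$ at the $\W_j$: given the (at most two) angular cut locations $\alpha,\beta$ where the signs of $\omega$ flip, I would take $\phi=(\alpha+\beta)/2$ and choose $c$ so that $L$ vanishes exactly at $\alpha$ and $\beta$, then fix the overall sign of $L$ to match $\omega$ (and in the no-sign-change case simply take $L(x,y,z)=z$).

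I would then finish by contradiction. Suppose $I_i\le 2$. By the previous paragraph choose a linear functional $L$ with $L(\W_j)$ of the same sign as $\omega_{ij}$ for every $j$ with $\omega_{ij}\neq 0$. Then each term of $\sum_j\omega_{ij}L(\W_j)$ is nonnegative and at least one is strictly positive, so the sum is positive; but by linearity it equals $L\!\left(\sum_j\omega_{ij}\W_j\right)=L(0)=0$, a contradiction. Hence $I_i>2$, and since $I_i$ is always even we conclude $I_i\ge 4$. The step I expect to be the main obstacle is this realizability claim: carefully verifying that an arbitrary sign pattern with at most two angular sign changes (including the placement of cut angles in the gaps between distinct edge directions, and the handling of zero entries of $\omega$, which may sit on either side of a cut and contribute nothing to the sum) is exactly matched by the zeros of a shifted cosine on the cone. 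The remaining ingredients --- the equilibrium identity from Lemma~\ref{lem:cokeq}, membership of the $\W_j$ on the cone, and the parity of $I_i$ --- are bookkeeping that follows directly from the definition of the lift.
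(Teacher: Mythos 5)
Your proof is correct and is essentially the paper's argument in a different packaging: the paper's projection onto the bisector of the negative cone together with the $\cos(\theta)\,\|\p_i-\p_j\|$ offset is precisely the linear functional $L(x,y,z)=-x+\cos(\theta)z$ evaluated on your lifted vectors $\W_j$, so the separating functional you construct is the same object the paper uses. Your packaging is marginally more uniform --- it subsumes the $I_i=0$ case of Lemma~\ref{lem:geom0} via $L=z$ and sidesteps the question of whether one sign class fits in a cone of angle strictly less than $\pi$ --- but the key mechanism, playing \eqref{eq:coker1} and \eqref{eq:coker2} off against a functional that strictly separates the two sign classes, is identical, and your realizability step (a shifted cosine vanishing at two cut angles chosen in the gaps) goes through exactly as you sketch it.
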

\begin{figure}[t]
    \centering
    \includegraphics[width=0.25\textwidth]{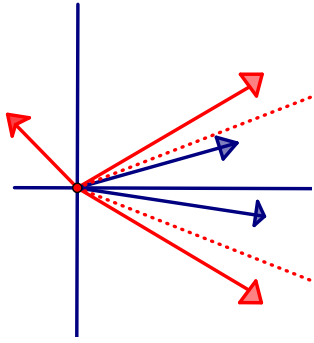}
    \caption{Illustration of the proof of Lemma \ref{lem:geom}.  This 
    vector configuration  has only two sign changes (blue are negative signs and 
    red are positive).}
    \label{fig: wedge}
\end{figure}
\begin{proof}
In what follows, we ignore any edge with a zero coefficient in $\v$.

From Lemma~\ref{lem:geom0}, $I_i\ge 2$.
So now we will suppose that $I_i=2$.
and arrive at a contradiction.

Using the assumed $G$-like property
of disk $i$, we can apply
Lemma~\ref{lem:cokeq}.
Thus the associated stress vector
 vector $\omega$,
where $\omega_{ij} := r_ir_j \v_{ij}$
 must satisfy the 
equilibrium condition at vertex $i$.

With the supposed $2$ sign changes, the edges 
with one of the signs (say $-$) must
be in a cone of angle $2\theta < \pi$ (for example, the dotted red
cone in Figure~\ref{fig: wedge}).
(Here we use the assumption that these edges are drawn in order in $(G,\p)$.)

Euclidean transforms have no effect on Equations 
(\ref{eq:coker1}) and  (\ref{eq:coker2}),
so without loss of generality , we may assume that the positive part of the  $x$-axis
is the bisector of the cone.  
The $2$D equilibrium condition of Equation (\ref{eq:coker1}) must hold 
after projection along any direction, including onto the $x$-axis,
since \eqref{eq:coker1} is invariant under any affine transformation
(see, e.g., \cite{MR2504741}).

Let $N^+$ denote the neighbors of $i$ connected by edges with 
positive sign and $N^-$ the neighbors connected by negatively signed
edges.
Let $p^x_i$ be the $x$-coordinate of the point $\p_i$.
We then get:
\begin{eqnarray*}
\sum_{j \in N^+} \omega_{ij} (p^x_i-p^x_j)
=
\sum_{j \in N^-} -\omega_{ij} (p^x_i-p^x_j).
\end{eqnarray*}
But for $j \in N^+$ (outside the cone), we have 
\begin{eqnarray*}
(p^x_i-p^x_j) < \cos(\theta)\; \|\p_i-\p_j\| 
\end{eqnarray*}
while for $j \in N^-$ (inside the cone),  we have 
\begin{eqnarray*} 
(p^x_i-p^x_j) > \cos(\theta)\; \|\p_i-\p_j\|.
\end{eqnarray*}
Putting these estimates together we have 
\[
    \sum_{j\in N^+} \omega_{ij} \cos(\theta)\; \|\p_i-\p_j\| 
    > \sum_{j \in N^+} \omega_{ij} (p^x_i-p^x_j)
    = \sum_{j \in N^-} -\omega_{ij} (p^x_i-p^x_j)
    > \sum_{j\in N^-} -\omega_{ij} \cos(\theta)\; \|\p_i-\p_j\| 
\]
which means that Equations \eqref{eq:coker1} and \eqref{eq:coker2} 
cannot 
hold simultaneously.
\end{proof}
See Figure~\ref{fig: wedge} for an illustration of this argument.

\vspace{.1in}
And now we can prove our Proposition.
\begin{proof}[Proof of Proposition~\ref{prop:nonsing}]

Suppose that $J$ has a non-zero co-kernel vector $\v$.
Let $n'$ be the number of disks
with at least one incident edge in $G$
with a non-zero $\v$ value.
From Lemma~\ref{lem:2oo}, at most two of these
$n'$ disks are 
non $G$-like, and at least $n'-2$
are $G$-like.
Summing over the indices of 
$n'-2$  disks 
that are $G$-like,
and using Lemma~\ref{lem:geom}, we obtain 
the value of at least $4n'-8$. 
The remaining two of the $n'$ disks 
do not overlap and so add at least 
$4$ more to the sum using Lemma~\ref{lem:geom0}, giving us
at least $4n'-4$.
But this contradicts Lemma~\ref{lem:topo}, 
which ensures
$\sum_i I_i \leq 4n'-8$. 
\end{proof}

\subsection{Square Jacobian}

The matrix $J$ tells us  how differential 
changes, $(\p',\r')$ lead to differential
changes, $\f'$, 
in the inversive distances along the edges of 
a maximal planar graph $G$.
The fact that, for an (almost-)triangulated packing,  $J$ has full row rank tells us that all differential
changes of inversive distances on the edges 
of $G$ are achievable. 

From this row rank and the matrix size, we see that 
the kernel of $J$ is $6$ dimensional, corresponding exactly
to the $6$ M{\"o}bius degrees of freedom. 

Next we wish to mod out the M{\"o}bius degrees
of freedom. We will do this by marking and pinning:

\begin{definition}
 Let $(\p,\r)$ be a packing with contact graph of either
  $G$, $G^-$, or $H$.
Pick a triangular face of $G^-$, and \defn{mark}
the corresponding three disks.
(Ultimately these three disks will form our canonical tridisk boundary.)
We will also refer to the three edges in 
$G$,
corresponding to this triangle as \defn{marked}.
We then \defn{pin} the centers (but not radii) of our three marked disks.
We call this a \defn{center-pinned}  packing.

The affect of center pinning on $J$ is simply to discard 
the corresponding
$6$ columns, giving us a square matrix of size $3n-6$ we call the \defn{center-pinned Jacobian} $J_c$.
\end{definition}

\begin{lemma}
\label{lem:nonsing2}
 Let $(\p,\r)$ be a  center pinned packing with contact graph of either
  $G$, $G^-$, or $H$.
Then its center-pinned Jacobian (defined by edges in $G$) is a non-singular matrix.
\end{lemma}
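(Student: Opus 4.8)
The plan is to show that the square matrix $J_c$ (of size $3n-6$) has trivial kernel. Since $J_c$ is obtained from $J$ by deleting the six columns corresponding to the pinned centers, any $w \in \ker J_c$ extends, by inserting zeros into those six coordinates, to a vector $\tilde w = (\p',\r') \in \ker J$ in which the three marked centers have zero velocity, $\p'_1 = \p'_2 = \p'_3 = 0$ (the deleted columns are annihilated anyway). By the discussion preceding the lemma, $\ker J$ is exactly the six-dimensional space of infinitesimal M\"obius motions, so $\tilde w$ is the derivative at $t=0$ of a one-parameter family $\phi_t$ of M\"obius transformations with $\phi_0 = \mathrm{id}$ acting on $(\p,\r)$. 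Thus it suffices to prove that an infinitesimal M\"obius motion fixing the three marked centers to first order must be trivial; then $\tilde w = 0$, hence $w = 0$, and $J_c$ is non-singular.

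First I would pin down the three marked radii. The three marked edges lie in the face of $G^-$ and hence in each of $G$, $G^-$, $H$, so the three marked disks are mutually tangent, and along these edges $\tilde w$ preserves the inversive distance (value $1$), since $\tilde w \in \ker J$ and $J$ is built from the edges of $G$. By the contact simplification \eqref{eq:contact}, the $r_i$ and $r_j$ entries of the Jacobian row for a marked (tangent) edge $ij$ both equal $\tfrac{-(r_i+r_j)}{r_ir_j}$, so the row equation of $J\tilde w = 0$ with $\p'_i = \p'_j = 0$ reduces to $r'_i + r'_j = 0$. The three marked edges then give the system $r'_1 + r'_2 = r'_2 + r'_3 = r'_1 + r'_3 = 0$, whose only solution is $r'_1 = r'_2 = r'_3 = 0$. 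Hence the infinitesimal motion fixes the three marked disks entirely, centers and radii.

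Next I would pass to the contact points. A M\"obius transformation carries the tangency point of two tangent disks to the tangency point of their images, and the contact point of disks $i$ and $j$ is the point on segment $\p_i\p_j$ at distance $r_i$ from $\p_i$, a smooth function of the two centers and radii. Since $\phi_t$ fixes the three marked disks to first order, it fixes their three (distinct) contact points to first order, i.e. the motion's velocity field vanishes at these three points. An infinitesimal M\"obius motion is a holomorphic vector field of the form $v(z) = \alpha + \beta z + \gamma z^2$ with $\alpha,\beta,\gamma \in \CC$; a degree-$\le 2$ polynomial vanishing at three distinct points is identically zero, so $v \equiv 0$. (Equivalently, this is the infinitesimal form of the uniqueness in Theorem~\ref{thm:mob3}.) Therefore $\phi_t$ is trivial to first order, giving $\tilde w = 0$ and the claim.

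The radius computation is routine. The genuinely delicate step is the passage in the last paragraph: fixing the three centers does not by itself fix any point of the plane under $\phi_t$, because M\"obius maps do not send disk centers to disk centers. The argument must therefore route through the tangency constraints to fix the radii, and then through the \emph{contact points}---which are honest points tracked faithfully by M\"obius transformations---to produce three genuine fixed points and invoke the rigidity of M\"obius maps on point triples.
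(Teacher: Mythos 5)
Your proof is correct, but it takes a genuinely different route from the paper's. The paper argues about \emph{rank}: starting from the full row rank $3n-6$ of $J$ (Proposition~\ref{prop:nonsing}), it uses Corollary~\ref{cor:tri3} to produce, for each of the six pinned coordinates, an explicit one-parameter M\"obius family that moves that coordinate alone among the six; differentiating gives a kernel vector of $J$ supported on that one pinned column plus the unpinned ones, which shows each deleted column lies in the span of the $3n-6$ retained columns, so deletion preserves the rank and the square matrix $J_c$ is non-singular. You instead argue about the \emph{kernel}: you extend a putative kernel vector of $J_c$ by zeros to a kernel vector of $J$, identify it with an infinitesimal M\"obius motion, kill the three marked radii via the tangency rows \eqref{eq:contact}, pass to the three contact points (correctly noting that centers are not M\"obius-equivariant but tangency points are), and invoke the fact that a quadratic holomorphic vector field vanishing at three distinct points is zero. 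Both arguments rest on Proposition~\ref{prop:nonsing} and Theorem~\ref{thm:invrMob}; yours buys a self-contained rigidity statement about M\"obius vector fields in place of Corollary~\ref{cor:tri3}, while the paper's column-by-column argument buys independence from the claim that the kernel of $J$ is \emph{exactly} the M\"obius directions. That claim is the one soft spot in your write-up: it requires knowing that the derivative of the M\"obius action at $(\p,\r)$ is injective (otherwise the M\"obius directions might span less than the full $6$-dimensional kernel). The paper asserts this in the surrounding discussion without proof, and your final step --- a quadratic vector field vanishing at the three tridisk contact points is identically zero --- is precisely the argument that establishes it, so your proof is non-circular; it would be worth stating that injectivity explicitly as a preliminary step rather than importing the ``exactly six M\"obius degrees of freedom'' claim wholesale.
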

\begin{proof}
From Proposition~\ref{prop:nonsing}, 
$J$ has rank $3n-6$.

From  Corollary~\ref{cor:tri3} we can fix the $5$ coordinates corresponding to say, $\p_1$ and $\p_2$, and the
$x$-coordinate of $\p_3$, and smoothly move the $y$-coordinate of $\p_3$ using a a family of
M{\"o}bius transformations $\phi_t$. Differentiating $\phi_t$  by $t$ at $t=0$ gives us a vector 
$(\p',\r')$ which, from Theorem~\ref{thm:invrMob},
must be in the kernel of $J$. 
By construction $(\p',\r')$
has $0$ entries corresponding to the $5$ fixed coordinates. Thus the column of $J$ corresponding
the $y$-coordinate of $\p_3$
must be linearly dependent on the other $3n-6$ columns. So this column can be
removed without changing the rank of the Jacobian. The same reasoning applies symmetrically
to all $6$ of these columns. 
Removing these $6$ columns gives us the center-pinned Jacobian, $J_c$ which is square and has rank $3n-6$.
\end{proof}

Next, we note that $J_c$ has a useful block form.
\begin{definition}
The three rows of $J_c$ corresponding to the marked edges are only supported
on the three columns corresponding to the three radii corresponding to the marked disks (the positional 
columns for these disks have already been removed).
Let us now also pin these three radii, resulting in a \defn{pinned packing}
By removing these three rows and columns, we obtain a square  matrix of size $3n-9$. 
We call this the \defn{pinned Jacobian} $J_p$. Its rows correspond to the unmarked edges of $G$ and its columns correspond
to the positions and radii of the unpinned vertices.
\end{definition}

\begin{lemma}
\label{lem:nonsing3}
 Let $(\p,\r)$ be a pinned
  packing with contact graph of either
  $G$, $G^-$, or $H$.
Then its pinned  Jacobian (defined by the edges of $G$) is a non-singular matrix.
\end{lemma}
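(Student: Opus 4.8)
The plan is to read off a block-triangular structure for the center-pinned Jacobian $J_c$ and then factor its determinant. First I would fix an ordering of the rows and columns of $J_c$ that isolates the marked data: put the three marked-radius columns first, then the $3n-9$ unmarked columns (positions and radii of the unmarked disks), and put the three marked-edge rows first, then the $3n-9$ unmarked-edge rows. With this ordering the claim is that
\[
J_c = \begin{pmatrix} D & 0 \\ E & J_p \end{pmatrix},
\]
where $D$ is the $3 \times 3$ block of derivatives of the three marked inversive distances with respect to the three marked radii, $E$ is a $(3n-9)\times 3$ block, and $J_p$ is the pinned Jacobian defined just above.

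The one substantive point is to verify that the upper-right block genuinely vanishes, which is exactly the support statement recorded before the lemma. Each marked edge $ij$ joins two marked disks, so the corresponding row of $J$ is supported only on the columns for $\p_i,\p_j,r_i,r_j$; center-pinning has already removed the position columns $\p_i,\p_j$ (the marked disks are precisely the center-pinned ones), and the two surviving columns $r_i,r_j$ lie among the three marked-radius columns. Hence no marked-edge row has any entry in an unmarked column, which is the assertion that the upper-right block is zero. (The lower-left block $E$ need not vanish, since an unmarked edge may join a marked disk to an unmarked one, but that does not matter for the argument.)

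From this block-lower-triangular form the determinant factors as $\det J_c = \det D \cdot \det J_p$. By Lemma~\ref{lem:nonsing2} the matrix $J_c$ is non-singular, so the product is non-zero and therefore $\det J_p \neq 0$; that is, $J_p$ is non-singular. Equivalently, and without ever mentioning $D$, I could argue on kernels: if $J_p w = 0$ for some vector $w$ indexed by the unmarked coordinates, then extending $w$ by zeros in the three marked-radius slots yields a vector that the block structure sends to $0$ under $J_c$, and since $J_c$ has trivial kernel we get $w=0$.

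The hard part is essentially already done elsewhere: all the real content lives in Proposition~\ref{prop:nonsing} and Lemma~\ref{lem:nonsing2}, and this lemma is only the bookkeeping step that passes from the square center-pinned matrix to the smaller pinned one. The sole thing to watch is the support claim for the marked-edge rows, and that rests on two facts already in place, namely that the three marked disks form a triangular face (so the three marked edges connect only marked disks) and that center-pinning has already deleted those disks' position columns.
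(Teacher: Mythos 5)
Your proof is correct and is exactly the paper's argument: the paper records the same support fact (marked-edge rows live only on marked-radius columns once the marked positions are pinned) in the definition of $J_p$, and then deduces non-singularity of $J_p$ from the block-triangular form together with Lemma~\ref{lem:nonsing2}. You have simply written out the determinant/kernel bookkeeping that the paper leaves implicit.
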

\begin{proof}
This follows from Lemma~\ref{lem:nonsing2}
and its block form just described.
\end{proof}

\section{Packing Manifold}

\begin{definition}
\label{def:sgm}
 Let $G$ and $H$ be maximal planar graphs
  related by flipping an edge $e^-$, and 
 $G^-$ their common,  almost-maximal planar graph.
Pick a triangular face $T$ of $G^-$, and \defn{mark}
(in $G$, $G^-$, and $H$)
the corresponding three vertices, in some order, with the labels $1$, $2$ and $3$.
We also refer to the three edges of $T$  as marked.
Given 
any packing $P$ with contact graph $G$, $G^-$ or $H$,
we mark three of its disks 
using these vertex markings.

Define the \defn{packing set} $S_{G^-}$, to be the subset of  configurations of $n$ disks
that are
packings and where
the contact graph is exactly the graph $G^-$ (no extra contacts), and 
that are 
tridisk-contained under the above marking. There are $3$ common pinned disks in $S_{G^-}$
 
Let us consider the \defn{configuration space} of the unpinned $n-3$ disks  as $\RR^{3n-9}$.

\end{definition}

This set satisfies a set of 
strict radius inequalities
$r_i>0$, 
strict inequalities over the non-edges, and 
equalities over the contact edges.
Given these constraints, the tridisk containment can then be enforced by the 
strict inequality
requirement that
the unmarked-disk centers are in the open interior of the tricusp region.

\begin{definition}
The \defn{frontier points} of $S_{G^-}$
are define as $\overline{S_{G^-}}\backslash S_{G^-}$.
\end{definition}

Now we establish some geometric properties
of $S_{G^-}$.

\begin{lemma}
\label{lem:mani}
$S_{G^-}$ is a $1$-dimensional smooth manifold.
\end{lemma}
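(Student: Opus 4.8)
The plan is to realize $S_{G^-}$ as the regular preimage of a point under a smooth map, so that the preimage theorem (constant-rank / implicit function theorem) immediately gives a smooth manifold, and a dimension count gives dimension $1$. Concretely, I would set up the inversive-distance map $f$ restricted to the edges of $G^-$, evaluated on pinned configurations in $\RR^{3n-9}$, and show that the contact constraints cut out $S_{G^-}$ transversally with a one-dimensional slack coming from the single missing contact.

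First I would fix the ambient space. Since three disks are center-pinned and radius-pinned by the marking (they form the canonical tridisk), the remaining $n-3$ disks have $3(n-3) = 3n-9$ free coordinates, matching the stated configuration space $\RR^{3n-9}$. On this space consider the smooth map $g$ whose components are the inversive distances $f(\p,\r)_{ij}$ over the $3n-9$ unmarked edges of the graph $G$ (equivalently, all but one of these are the edges of $G^-$, and the remaining one is the edge $e^-$). A configuration lies on the contact locus for $G^-$ exactly when the inversive distance equals $1$ on every edge of $G^-$; there are $3n-10$ such edges among the $3n-9$ unmarked edges of $G$, since $G^-$ omits the one marked... more carefully: $G^-$ has $3n-7$ edges total, of which $3$ are marked, leaving $3n-10$ unmarked contact constraints. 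So I would define $g \colon \RR^{3n-9} \to \RR^{3n-10}$ by collecting these $3n-10$ inversive distances and let $S_{G^-}$ be (the open subset of) $g^{-1}(\mathbf{1})$ cut out by the strict inequalities.

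The key step is regularity: the Jacobian of $g$ at any point of $S_{G^-}$ must have full rank $3n-10$. Here I would invoke the pinned Jacobian $J_p$ of Lemma~\ref{lem:nonsing3}, which is the $(3n-9)\times(3n-9)$ nonsingular matrix whose rows are indexed by \emph{all} unmarked edges of $G$ (including $e^-$) and whose columns are the $3n-9$ unpinned coordinates. The Jacobian of $g$ is exactly $J_p$ with the single row corresponding to $e^-$ deleted. Deleting one row from a nonsingular square matrix yields a $(3n-10)\times(3n-9)$ matrix of full row rank $3n-10$. Hence $\mathbf{1}$ is a regular value of $g$ on $S_{G^-}$, so by the implicit function theorem $g^{-1}(\mathbf{1})$ is a smooth manifold of dimension $(3n-9)-(3n-10)=1$. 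Finally, $S_{G^-}$ is the intersection of this manifold with the \emph{open} set defined by the strict radius positivity, strict non-edge inequalities, and open tricusp-containment condition, so it is an open submanifold and remains a smooth $1$-manifold.

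The main obstacle is the bookkeeping of which Jacobian rows and columns survive each pinning step, and confirming that the one deleted row is precisely the $e^-$ row so that the remaining rows are exactly the $G^-$ contact constraints. The geometric content—nonsingularity—is entirely supplied by Lemma~\ref{lem:nonsing3}; everything else is the linear-algebra observation that a full-rank square matrix stays full row rank after deleting one row, together with the standard fact that a regular preimage intersected with an open set is a smooth manifold of the expected dimension.
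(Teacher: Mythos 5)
Your proposal is correct and follows the same route as the paper: both treat the open conditions (positive radii, non-edge disjointness, tricusp containment) as locally automatic, reduce to the $3n-10$ inversive-distance equalities on the unmarked edges of $G^-$, obtain full row rank of the corresponding rows by deleting the $e^-$ row from the nonsingular pinned Jacobian $J_p$ of Lemma~\ref{lem:nonsing3}, and conclude via the implicit function theorem. The only difference is presentational — you phrase it globally as a regular level set intersected with an open set, while the paper argues pointwise in a neighborhood — which is immaterial.
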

\begin{proof}
Let $P$ be a point in $S_{G^-}$. 
In a sufficiently small neighborhood of $P$ in configuration space, all of the radii must remain positive,
all non-edge connected disk pairs must remain disjoint and all of the unmarked-disk centers must be inside of the tricusp. 
(These are open conditions.)
Thus locally, we need to only 
consider the 
$3n-10$
inversive distances equalities
of Equation (\ref{eq:inv}).

The partials of these constraints are described in the
$3n-10$ rows of $J_p$ corresponding 
to the unmarked edges of $G^-$.
By Lemma~\ref{lem:nonsing3}, these rows are linearly 
independent and thus, from the implicit function theorem, 
$S_{G^-}$, restricted to some neighborhood of $P$, is a smooth manifold of 
dimension $1$.
\end{proof}
See~\cite{het} for some related and more general results.

\begin{lemma}
\label{lem:bounded}
$S_{G^-}$ is bounded, and so its closure is compact.
\end{lemma}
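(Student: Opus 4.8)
The plan is to exhibit an explicit bounded region of the configuration space $\RR^{3n-9}$ that contains all of $S_{G^-}$, and then invoke the Heine--Borel theorem (a closed and bounded subset of Euclidean space is compact). Since the three marked disks are pinned to the canonical tridisk, with fixed centers and unit radii, boundedness reduces to controlling the centers and radii of the $n-3$ unmarked disks.

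First I would bound the centers. By the definition of tridisk-containment, every unmarked disk lies inside the tricusp region of the canonical tridisk. This region is a fixed, bounded curvilinear triangle cut out by the three unit circles, so in particular every unmarked center lies in a fixed bounded subset of the plane. Next I would bound the radii from above: because each unmarked disk lies \emph{entirely} inside this bounded tricusp, its radius cannot exceed the size of that region, and hence all radii are bounded above by a fixed constant. Together these two steps place every configuration in $S_{G^-}$ inside a fixed box in $\RR^{3n-9}$, which is exactly the boundedness claim.

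For completeness, and for later use when analyzing the frontier $\overline{S_{G^-}}\setminus S_{G^-}$, I would also record that the radii are bounded \emph{away} from $0$: the three marked disks have radius $1\ge 1$, and $G^-$ is $3$-connected by Lemma~\ref{lem:3c}, so Proposition~\ref{prop:nozero} supplies a uniform positive lower bound on all radii depending only on $n$. With $S_{G^-}$ shown to be bounded, its closure is closed and bounded in $\RR^{3n-9}$, hence compact.

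This statement is essentially immediate once the definitions are unwound, so I do not expect a genuine obstacle; the one point requiring care is to use that tridisk-containment confines the \emph{whole} unmarked disk, not merely its center, to the bounded tricusp. It is exactly this full containment that simultaneously yields the bound on the centers and the upper bound on the radii.
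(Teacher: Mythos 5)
Your proof is correct and is essentially the paper's argument: the paper's entire proof is the one-line observation that "the tridisk constraint bounds the coordinates of $\p$ and $\r$," which is exactly your unwinding that tridisk-containment confines each unmarked disk (center and radius) to the fixed bounded tricusp, after which compactness of the closure is Heine--Borel. The additional remark about radii being bounded away from $0$ is not needed for this lemma (the paper defers that to Proposition~\ref{prop:nozero} when analyzing the frontier), but it does no harm.
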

\begin{proof}
The tridisk constraint
bounds the coordinates of $\p$ and $\r$.
\end{proof}

\begin{lemma}
\label{lem:frontcontact}
A frontier point of $S_{G^-}$ 
can only be a tridisk-contained maximal triangulated  packing with
contact graph $G$ or $H$ (either $ac$ or $bd$ in contact.)
\end{lemma}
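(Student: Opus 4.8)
The plan is to realize a frontier point as a limit $P=\lim_k P_k$ with $P_k\in S_{G^-}$, determine which defining (strict) condition of $S_{G^-}$ degenerates in the limit, and show that the only possibility is that $e^-=ac$ or $e^+=bd$ comes into contact. First I would check that $P$ is an honest disk configuration: by Lemma~\ref{lem:bounded} the centers and radii of the $P_k$ lie in a compact set, and since the three marked disks are the unit-radius canonical tridisk (so at least three disks have radius $\ge 1$), Proposition~\ref{prop:nozero} bounds every radius away from $0$ uniformly. Hence at $P$ all radii are positive and the inversive-distance map \eqref{eq:inv} is continuous there, so every \emph{closed} condition passes to the limit: the packing inequalities (all inversive distances $\ge 1$) hold, each edge of $G^-$ is still a contact (its inversive distance was exactly $1$ along the sequence), and every unmarked center lies in the \emph{closed} tricusp.

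Since $P\notin S_{G^-}$, some strict condition must have failed, and I would rule out all but one. A vanishing radius is impossible by the uniform lower bound just obtained. A center reaching the boundary of the open tricusp is also impossible: that boundary is made of arcs of the three unit canonical circles (joined at the three cusps), so such a center would satisfy $\|\p_i-\p_k\|=r_k=1$ for some marked disk $k$, and substituting into \eqref{eq:inv} gives inversive distance $-r_i/2<0$, contradicting the packing inequality that holds at $P$. Consequently the only admissible degeneration is that some pair which is a non-edge of $G^-$ has become tangent.

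To pin down which non-edge, I would first invoke Lemma~\ref{lem:G3}: for every non-edge $ij$ other than $e^-$ and $e^+$, that lemma gives a gap between disks $i$ and $j$ bounded below by a constant (depending only on $n$) uniformly over $S_{G^-}$, so the gap survives in the limit and such a pair cannot touch at $P$. Thus the new tangency is $e^-$ or $e^+$. Both cannot occur at once, for then the contact graph of $P$ would contain $G\cup e^+$, which is non-planar because $G$ is maximal planar, contradicting Lemma~\ref{lem:packemb}. Hence the contact graph of $P$ is exactly $G^-\cup e^-=G$ or $G^-\cup e^+=H$.

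It remains to record the qualifiers. Because no center can reach the tricusp boundary, all unmarked centers stay in the \emph{open} tricusp, so $P$ is tridisk-contained; and since $G$ and $H$ are maximal planar, $P$ is a maximal triangulated packing, as claimed. The step I expect to be the main obstacle is the tricusp-boundary case: one must confirm that the only route out of the open tricusp is onto a canonical circle, and it is precisely there that the inversive-distance computation forces a packing violation, closing off the escape.
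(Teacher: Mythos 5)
Your proof is correct and follows essentially the same elimination argument as the paper: Proposition~\ref{prop:nozero} rules out vanishing radii, Lemma~\ref{lem:G3} rules out new contacts on non-edges other than $e^-$ and $e^+$, and the closed (equality and non-overlap) conditions pass to the limit. You additionally spell out two details the paper leaves implicit --- the inversive-distance computation showing no unmarked center can reach the tricusp boundary, and the planarity argument ruling out simultaneous contact along $e^-$ and $e^+$ --- and both are correct.
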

\begin{proof}
Let us consider the possible frontier points of $S_{G^-}$.
From Proposition~\ref{prop:nozero}, for a packing in 
$S_{G^-}$, every radius must be bounded above  zero. 
From Lemma~\ref{lem:G3}, for a packing in 
$S_{G^-}$, every disk pair corresponding to any non-edge,
except for $ac$ or
$bd$, must be bounded away from contact.
Meanwhile, any disk configuration with positive radii, where  
some edge of $G^-$  is not in tangential contact, 
cannot be approached while satisfying the (closed) equality conditions of $S_{G^-}$.
Finally, the above conditions also keep all of the disk centers bounded away from the tricusp boundary.
The lemma follows by this process of elimination.
\end{proof}

Ultimately, we will see below as a result of
our
flow argument, that each connected component of 
$S_{G^-}$ has one frontier point that
is a tridisk-contained packing with  contact graph $G$
and another frontier point that is a tridisk-contained packing with the flipped contact graph $H$.
Moreover a packing $P$  with contact
graph $G$ (or resp. $H$) is in the closure
of exactly one connected component of $S_{G^-}$.

\section{Flow}

In this section we will describe how to flow continuously
from a tridisk-contained marked packing $P(0)$ with contact graph
$G$ to a tridisk-contained packing $P(1)$ with 
contact graph
$H$ (and the same marking). 
As above, the graph $H$ will be obtained from $G$ by removing
an edge $e^-$, (without loss of generality , its last edge)
resulting in the graph $G^-$, and then adding
in its cross edge, $e^+$. For $t\in(0..1)$, the packing $P(t)$
will have contact graph $G^-$. Throughout the flow, the 
outer three disks will remain completely pinned, and thus
we will work with $(\p,\r) \in \RR^{3n-9}$.

We will find this flow as the solution to an ordinary
differential equation that we set up now.
Given any pinned disk configuration 
$(\p,\r)$, 
we have a pinned Jacobian matrix $J_p$. Where $J_p$ is non-singular, we define
the velocity field:
$(\p',\r') := J_p^{-1}[0, 0, ..., 0, 1]^t$. Differentially, this leaves all of the
inversive distances fixed except for increasing the inversive distance
corresponding to $e^-$.
The matrix $J_p$ is non-singular 
for some $(\p,\r)$ (Lemma~\ref{lem:nonsing3}) thus it is
non-singular
over a Zariski-open subset
$U$ of $\RR^{3n-9}$.
This defines a smooth (never zero)
velocity field over $U$
that includes $P(0)$,
setting up 
a system of 
ODEs (ordinary differential equations).

Given this system of ODEs, we start a
trajectory at $P(0)$ and integrate forward in time.
As a smooth ODE, this defines a unique maximal trajectory 
$(\p(t),\r(t))$ 
for some (possibly bi-infinite) open time interval.

The following is standard. See \cite[Section 4.1]{ODE} for a readable discussion.
\begin{theorem}
\label{thm:ode}
Going forward in time, 
a maximal trajectory of a $C^1$
system of 
ODEs defined over an
open set $U$ must leave any compact set contained 
in $U$, or go on for infinite time.
\end{theorem}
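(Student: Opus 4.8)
The plan is to prove this by contraposition. Write the forward maximal trajectory as $\gamma\colon[0,T)\to U$, where $T$ is the maximal time of existence. I will assume that $T<\infty$ and that, nevertheless, $\gamma$ fails to eventually leave some compact set $K\subset U$, and from this I will construct an extension of $\gamma$ past time $T$, contradicting the maximality of $T$. The assumed failure means precisely that there is a sequence of times $t_n\uparrow T$ with $\gamma(t_n)\in K$, and this is the hypothesis I will exploit.

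First I would fatten $K$ to a compact collar neighborhood inside $U$. Since $K$ is compact and $U$ is open, the distance from $K$ to the complement of $U$ is strictly positive, so for small enough $\delta>0$ the closed $\delta$-neighborhood $K_\delta:=\{x:\operatorname{dist}(x,K)\le\delta\}$ is a compact subset of $U$. On $K_\delta$ the continuous vector field $X$ defining the system is bounded, say $\|X\|\le M$, and (being $C^1$) is Lipschitz. Next I would show that the trajectory is trapped in $K_\delta$ as $t\to T^-$: choosing $n$ large enough that $M(T-t_n)<\delta$, a standard bootstrapping argument shows that for all $t\in[t_n,T)$ the speed bound $\|\gamma'(t)\|=\|X(\gamma(t))\|\le M$ keeps $\gamma(t)$ within distance $M(T-t_n)<\delta$ of the point $\gamma(t_n)\in K$, so $\gamma(t)$ cannot escape $K_\delta$. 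With the velocity uniformly bounded by $M$ on $[t_n,T)$, the curve $\gamma$ is Lipschitz there and hence satisfies the Cauchy criterion as $t\to T^-$; therefore the limit $p:=\lim_{t\to T^-}\gamma(t)$ exists and lies in the compact set $K_\delta\subset U$.

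Finally, because $p$ belongs to the open set $U$ on which $X$ is $C^1$, the local existence theorem (Picard--Lindel\"of) provides a solution of the system on $[T,T+\varepsilon)$ with initial value $p$ at time $T$. Gluing this to $\gamma$ yields a curve that is continuous at $T$, and since $\gamma'(t)=X(\gamma(t))\to X(p)$ as $t\to T^-$ matches the right-hand derivative $X(p)$ of the new piece, the glued curve is $C^1$ and solves the system on the strictly larger interval $[0,T+\varepsilon)$. This contradicts the assumption that $[0,T)$ was the maximal interval of existence, completing the argument. The step I expect to be the crux is the trapping argument of the second paragraph: one must use the positive separation of $K$ from the boundary of $U$ together with the velocity bound to guarantee both that the trajectory remains in a compact subset of $U$ near $T$ and that its terminal limit $p$ does not lie on the boundary of $U$. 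Without this, the limit $p$ could fail to lie in $U$ and the local existence theorem could not be invoked to extend the solution.
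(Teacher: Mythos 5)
Your proof is correct: the bootstrapped trapping of the trajectory in the compact collar $K_\delta$, the Lipschitz/Cauchy argument giving a terminal limit $p\in U$, and the Picard--Lindel\"of extension past $T$ constitute the standard ``escape lemma'' argument. The paper does not prove this statement at all --- it labels it as standard and cites a textbook --- so you have simply supplied the usual proof of the cited fact, and it holds up.
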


\begin{lemma}
\label{lem:SinU}
$\overline{S_{G^-}}\in U$.  So too is $P(0)$.
\end{lemma}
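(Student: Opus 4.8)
The goal is to show that the closure $\overline{S_{G^-}}$ is contained in the open set $U$ over which the pinned Jacobian $J_p$ is non-singular (and that the starting packing $P(0)$ lies in $U$ as well). The plan is to reduce both claims to the non-singularity result already established in Lemma~\ref{lem:nonsing3}, using the frontier characterization of Lemma~\ref{lem:frontcontact}.

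First I would recall that $U$ is precisely the locus in $\RR^{3n-9}$ where $\det J_p \ne 0$, so a configuration lies in $U$ exactly when its pinned Jacobian is non-singular. Thus it suffices to show that every point of $\overline{S_{G^-}}$ is a configuration whose pinned Jacobian (defined using the edges of $G$) is non-singular. I would split $\overline{S_{G^-}}$ into its interior $S_{G^-}$ and its frontier $\overline{S_{G^-}}\backslash S_{G^-}$.

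For points of $S_{G^-}$ itself, each such point is by definition a tridisk-contained packing with contact graph exactly $G^-$; Lemma~\ref{lem:nonsing3} applies directly to a pinned packing with contact graph $G^-$ and gives that $J_p$ is non-singular, so $S_{G^-}\subseteq U$. For the frontier points, I would invoke Lemma~\ref{lem:frontcontact}, which tells us that any frontier point of $S_{G^-}$ must be a tridisk-contained maximal triangulated packing with contact graph either $G$ or $H$ (corresponding to $ac$ or $bd$ coming into contact). But Lemma~\ref{lem:nonsing3} is stated exactly for pinned packings with contact graph $G$, $G^-$, \emph{or} $H$, so in each of these cases $J_p$ is again non-singular, placing the frontier point in $U$. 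Combining the two cases gives $\overline{S_{G^-}}\subseteq U$. Finally, $P(0)$ is a tridisk-contained marked packing with contact graph $G$, so the same application of Lemma~\ref{lem:nonsing3} shows $P(0)\in U$.

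I do not expect any serious obstacle here: the real content has already been front-loaded into Lemma~\ref{lem:frontcontact} (which identifies all frontier points) and Lemma~\ref{lem:nonsing3} (which handles all three relevant contact graphs uniformly). The only point requiring a small amount of care is bookkeeping about which graph's edges define $J_p$: throughout, the Jacobian is defined using the edges of $G$, and one must confirm that the hypotheses of Lemma~\ref{lem:nonsing3} are genuinely met by each frontier point, i.e.\ that a frontier packing with contact graph $H$ is indeed a pinned packing in the sense of that lemma. Since the marking (the pinned triangular face $T$ of $G^-$) is common to $G$, $G^-$, and $H$ by Definition~\ref{def:sgm}, this compatibility holds, and the argument closes.
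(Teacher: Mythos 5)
Your proposal is correct and follows essentially the same route as the paper: both decompose $\overline{S_{G^-}}$ into $S_{G^-}$ plus its frontier, use Lemma~\ref{lem:frontcontact} to identify every point of the closure as a pinned packing with contact graph $G$, $G^-$, or $H$, and then invoke Lemma~\ref{lem:nonsing3} to conclude non-singularity of $J_p$ and hence membership in $U$ (with $P(0)$ handled as the contact-graph-$G$ case). The extra bookkeeping you note about the common marking is a fair point but is already built into Definition~\ref{def:sgm}, exactly as you observe.
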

\begin{proof}
From Lemma~\ref{lem:frontcontact}, all disk configurations in $\overline{S_{G^-}}$
are tridisk-contained almost-triangulated packings
with packing graph $G^-$, or triangulated packings with
contact graph $G$ or $H$.
Lemma~\ref{lem:nonsing3} then guarantees they are in $U$.
In particular, it guarantees that  $P(0) \in U$.
\end{proof}

\begin{lemma}
\label{lem:start}
$(\p(t),\r(t)) \in S_{G^-}$ for
sufficiently small and positive $t$.
\end{lemma}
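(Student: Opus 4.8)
The goal is to show that the ODE trajectory starting at $P(0)$ immediately enters the packing set $S_{G^-}$, i.e. that for small positive $t$ the configuration $(\p(t),\r(t))$ is a tridisk-contained packing whose contact graph is \emph{exactly} $G^-$. The plan is to check each of the defining conditions of $S_{G^-}$ (Definition~\ref{def:sgm}) in turn, separating the contact equalities, which hold by construction of the velocity field, from the open conditions, which persist by continuity.

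First I would analyze the contact edges. By construction, the velocity field $(\p',\r') = J_p^{-1}[0,\dots,0,1]^t$ holds fixed the inversive distance on every edge of $G$ \emph{except} $e^-$, whose inversive distance increases with unit rate. At $t=0$ we have the packing $P(0)$ with contact graph $G$, so every edge of $G$ has inversive distance exactly $1$ (external tangency). Since the trajectory preserves the inversive distances on all edges of $G^- = G\setminus e^-$, these remain equal to $1$ for all $t$ in the trajectory's domain, so all the edges of $G^-$ stay in tangential contact: the equality constraints defining $S_{G^-}$ are satisfied. The edge $e^-$ (connecting $a$ and $c$) has inversive distance $1 + t$ at time $t$; for $t>0$ this exceeds $1$, so by the inversive-distance formula the disks $a$ and $c$ are no longer tangent and, being part of a packing that persists by the open conditions below, are separated with disjoint interiors. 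Thus $ac$ is not an edge of the contact graph for $t>0$, which is exactly what distinguishes $G^-$ from $G$.

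Next I would handle the open (strict) conditions. At $t=0$, the configuration $P(0)$ satisfies: all radii positive; all non-edge disk pairs strictly disjoint; and all unmarked-disk centers strictly inside the tricusp region (tridisk-containment). Each of these is an open condition, and the trajectory $(\p(t),\r(t))$ is continuous (indeed smooth) in $t$, so each condition persists on some interval $[0,\delta)$. The one subtlety is the pair $a,c$ across $e^-$: at $t=0$ these disks are tangent, not strictly disjoint, so I cannot merely invoke openness for this pair. Instead I use the computation of the previous paragraph — the inversive distance on $e^-$ strictly increases, moving $a$ and $c$ from tangency to strict separation — to conclude that for small $t>0$ the pair $ac$ is strictly disjoint as well. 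Combining, for all sufficiently small positive $t$ the configuration is a genuine packing whose only contacts are the edges of $G^-$, and which remains tridisk-contained, hence lies in $S_{G^-}$.

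The main obstacle is precisely the degenerate behavior at the endpoint $t=0$: since $P(0)\notin S_{G^-}$ (its graph is $G$, not $G^-$, and the pair $ac$ is in contact rather than strictly disjoint), I must verify that the flow \emph{leaves} the boundary into the open set $S_{G^-}$ rather than staying on it or crossing into the wrong component. This is resolved by observing that the sign of the rate of change of the inversive distance on $e^-$ is positive — the $e^-$ contact is being pushed apart, not pulled together — which is guaranteed by the choice of the vector $[0,\dots,0,1]^t$ (with the $e^-$ entry equal to $+1$) in defining the velocity field. Since $J_p$ is nonsingular at $P(0)$ by Lemma~\ref{lem:nonsing3}, the velocity is well-defined and nonzero, so this first-order separation is genuine and the trajectory truly enters $S_{G^-}$ for small positive $t$.
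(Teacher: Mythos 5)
Your proposal is correct and follows essentially the same route as the paper's proof: the equality constraints on the edges of $G^-$ are preserved by construction of the velocity field, the unit rate of increase of the inversive distance on $e^-$ destroys that one contact for $t>0$, and all remaining strict inequalities (positive radii, disjointness of non-edge pairs, tridisk containment) persist by continuity. Your treatment is just a more detailed write-up of the same argument, with the one genuinely delicate point (that openness cannot be invoked for the pair $ac$ itself) correctly identified and handled exactly as the paper does.
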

\begin{proof}
  From Lemma~\ref{lem:SinU}, the ODE is
  well defined at $P(0)$.
  The ODE is constructed to maintain  the equality constraints along 
  the edges of $G^-$.   Also by construction,
  the velocity
  of our trajectory at $t=0$ must destroy the contact
  along the edge $e^-$ for $t>0$. For small enough
  $t$, no other inequality constraints can become
  violated.
\end{proof}

\begin{remark}
\label{rem:also1}
This lemma also tells us
that every packing $P$ with contact graph
$G$  
must be the frontier point
of at least one connected component of 
$S_{G^-}$. 
By symmetry, the same is true of 
every packing $Q$ with contact graph
$H$.  
\end{remark}

\begin{lemma}
\label{lem:ifleave}
For any trajectory starting in $S_{G^-}$, if the trajectory leaves $S_{G^-}$,
it must first hit a frontier point of $S_{G^-}$.
\end{lemma}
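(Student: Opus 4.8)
The plan is to argue by a topological/continuity contradiction using the defining conditions of $S_{G^-}$. Recall that $S_{G^-}$ is carved out as an intersection of equality constraints (the inversive distances equal to $1$ along the edges of $G^-$) together with a collection of strict open conditions: the radii are positive, the non-edge pairs (other than $ac$ and $bd$) are strictly non-touching, the edge $e^-$-pair $ac$ is strictly non-touching, the cross-pair $bd$ is strictly non-touching, and the unmarked centers lie in the open interior of the tricusp. A point of the trajectory lies in $S_{G^-}$ precisely when all of these hold simultaneously. Since the trajectory is continuous in $t$ and starts inside $S_{G^-}$ (by Lemma~\ref{lem:start}), I would consider the first time it fails to be in $S_{G^-}$.

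First I would let $t^*$ be the infimum of times $t>0$ at which the trajectory is \emph{not} in $S_{G^-}$; by Lemma~\ref{lem:start} this infimum is strictly positive, and the trajectory lies in $S_{G^-}$ on the whole interval before $t^*$. The key observation is that the equality constraints along the edges of $G^-$ are \emph{maintained identically} by the very construction of the velocity field: the ODE fixes all inversive distances except the one along $e^-$, so at $t^*$ the point still satisfies every equality condition defining $\overline{S_{G^-}}$. Hence at the parameter $t^*$ the trajectory certainly lies in the closure $\overline{S_{G^-}}$. It remains only to verify that it lies on the frontier $\overline{S_{G^-}}\setminus S_{G^-}$ rather than in $S_{G^-}$ itself.

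Next I would show the point at $t^*$ cannot lie in the open set $S_{G^-}$. If it did, then all of the strict inequalities would hold at $t^*$ with strict slack; since each such inequality is an open condition and the trajectory is continuous, the point at $t^*$ would have a whole neighborhood in time still satisfying those strict conditions, and combined with the always-maintained equalities this would force the trajectory to remain in $S_{G^-}$ on an interval $(t^*-\delta, t^*+\delta)$. That contradicts the choice of $t^*$ as the first exit time. Therefore the point at $t^*$ is in $\overline{S_{G^-}}$ but not in $S_{G^-}$, i.e.\ it is a frontier point, and it is the first frontier point encountered, as desired.

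The main subtlety I anticipate is making the argument clean rather than the underlying mathematics hard: one must be careful that the failure at $t^*$ is a failure of one of the \emph{strict inequalities} degenerating to equality (a radius hitting zero, a non-edge pair coming into contact, or a center reaching the tricusp boundary) and never a failure of the equality constraints, since those are invariants of the flow by construction. The fact that $\overline{S_{G^-}} \subset U$ (Lemma~\ref{lem:SinU}) guarantees the velocity field stays well-defined and smooth up to and including the closure, so there is no issue of the trajectory blowing up or the ODE ceasing to exist before reaching the frontier; this keeps the continuity argument valid on the closed interval. With that in hand, the frontier-point structure from Lemma~\ref{lem:frontcontact} then tells us what kind of packing the exit point must be, which is what the subsequent flow argument will exploit.
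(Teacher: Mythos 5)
Your proposal is correct and follows essentially the same route as the paper's proof: the equality constraints are invariants of the flow by construction, so the first exit must occur through one of the strict (open) inequality conditions degenerating to an equality, which by continuity places that first exit point in $\overline{S_{G^-}}\setminus S_{G^-}$. Your version simply makes explicit the first-exit-time argument that the paper leaves implicit.
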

\begin{proof}
The set $S_{G^-}$ is defined by the intersection of some strict inequality (open) conditions and some equality constraints.
  Our ODE is constructed to maintain  these equality constraints.
  To leave the set it must violate an inequality, and so by continuity, the
  trajectory must first violate this inequality with an equality. Such a point is in the
  closure of $S_{G^-}$ making it a frontier point.
\end{proof}

\begin{lemma}
\label{lem:mustleave}
Any trajectory starting in $S_{G^-}$ 
must leave $S_{G^-}$ at some positive 
$t$.
\end{lemma}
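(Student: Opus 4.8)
The goal is to show that the trajectory cannot remain inside $S_{G^-}$ for all positive time, so the plan is to combine the compactness of $\overline{S_{G^-}}$ (Lemma~\ref{lem:bounded}) with the escape theorem for ODEs (Theorem~\ref{thm:ode}). First I would observe that $\overline{S_{G^-}}$ is a compact set contained in the open set $U$ on which the velocity field is defined (this is exactly Lemma~\ref{lem:SinU}). Applying Theorem~\ref{thm:ode} to this compact set, the maximal forward trajectory starting at a point of $S_{G^-}$ must either leave $\overline{S_{G^-}}$ in finite time or flow for infinite time while remaining in $\overline{S_{G^-}}$.

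The heart of the argument is to rule out the infinite-time alternative, and I expect this to be the main obstacle. The key leverage is that the velocity field is constructed so that the inversive distance along $e^-$ is strictly increasing: by definition $(\p',\r') = J_p^{-1}[0,\dots,0,1]^t$, which makes $\tfrac{d}{dt} f(\p,\r)_{e^-} = 1$ along the trajectory. Hence the inversive distance of the pair $ac$ grows without bound as $t\to\infty$. But on the compact set $\overline{S_{G^-}}$ all centers and radii are bounded, and all radii are bounded away from $0$ by Proposition~\ref{prop:nozero}; consequently the inversive distance $\tfrac{\|\p_a-\p_c\|^2 - r_a^2 - r_c^2}{2 r_a r_c}$ is a continuous function on the compact set $\overline{S_{G^-}}$ and is therefore bounded above there. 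An unbounded inversive distance is incompatible with the trajectory staying in $\overline{S_{G^-}}$, so the infinite-time possibility is excluded.

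Therefore the trajectory must leave the compact set $\overline{S_{G^-}}$ in finite positive time, and in particular it leaves $S_{G^-}$. Combined with Lemma~\ref{lem:start}, which guarantees the trajectory is genuinely inside $S_{G^-}$ for small positive $t$, this establishes that the trajectory starts in $S_{G^-}$ and then exits it at some finite positive $t$, which is the claim. I would then note for the record that by Lemma~\ref{lem:ifleave} the exit happens at a frontier point, setting up the identification (via Lemma~\ref{lem:frontcontact}) of that frontier point with a triangulated packing having contact graph $G$ or $H$; but that identification belongs to the subsequent flow argument rather than to this lemma. The one subtlety to handle carefully is confirming that $f(\p,\r)_{e^-}$ really is differentiable with derivative exactly $1$ along the flow, which follows immediately from the defining equation of the velocity field since $J_p$ encodes precisely the partials of the inversive distances along the edges of $G$ and the driving vector $[0,\dots,0,1]^t$ selects the $e^-$ coordinate.
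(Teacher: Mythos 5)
Your proof is correct and follows essentially the same route as the paper's: finite-time escape from the compact set $\overline{S_{G^-}}$ via Theorem~\ref{thm:ode}, plus ruling out the infinite-time case by noting that the inversive distance along $e^-$ increases at unit rate while Proposition~\ref{prop:nozero} and Lemma~\ref{lem:bounded} force it to be bounded on $\overline{S_{G^-}}$. The paper phrases the contradiction as the centers separating at a rate bounded below inside a bounded set, but that is the same observation.
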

\begin{proof}
From  Theorem~\ref{thm:ode},
(using Lemmas~\ref{lem:bounded} and \ref{lem:SinU}) if the trajectory is only 
defined for a finite time, then 
it must leave
$S_{G^-}$.

But if the trajectory goes on for infinite time, it also must
leave $S_{G^-}$ by the following argument.
Our ODE increases the inversive distance along $e^-$
at a constant rate, and since the radii are bounded
away from zero (Proposition~\ref{prop:nozero}), their centers must be moving
apart at a lower bounded rate.
Meanwhile $S_{G^-}$ is bounded  (Lemma~\ref{lem:bounded}). 
\end{proof}

\begin{proposition}
\label{prop:drive}
Any trajectory starting in $S_{G^-}$ 
 must hit a positive $t$ where 
  the edge $e^+$ is in contact. 
\end{proposition}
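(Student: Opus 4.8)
The plan is to combine the "must leave $S_{G^-}$" result (Lemma~\ref{lem:mustleave}) with the classification of frontier points (Lemma~\ref{lem:frontcontact}) to argue that the only way the trajectory can exit the packing set is by forcing the cross-edge $e^+$ (i.e.\ the pair $bd$) into contact. First I would invoke Lemma~\ref{lem:mustleave} to conclude that the trajectory, which starts in $S_{G^-}$, must leave $S_{G^-}$ at some positive time $t^*$. By Lemma~\ref{lem:ifleave}, the trajectory cannot leave without first striking a frontier point, so at the first exit time the configuration $(\p(t^*),\r(t^*))$ is a frontier point of $S_{G^-}$. Then Lemma~\ref{lem:frontcontact} tells us that any frontier point is a tridisk-contained maximal triangulated packing whose contact graph is either $G$ (the pair $ac$ in contact) or $H$ (the pair $bd$ in contact). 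It remains to rule out the graph-$G$ alternative, namely that $e^-$ (the pair $ac$) comes back into contact.

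The key step, and what I expect to be the main obstacle, is ruling out the return to contact along $e^-$. This is where the specific construction of the velocity field must be used: our ODE is defined so that $(\p',\r') = J_p^{-1}[0,\dots,0,1]^t$, which by construction holds all inversive distances on the edges of $G^-$ fixed while strictly \emph{increasing} the inversive distance along $e^-$ at a constant unit rate. Since the trajectory begins at $P(0)$ where $e^-$ is in contact (inversive distance $1$), and this inversive distance is monotonically increasing in $t$, it can never return to the value $1$ for any $t>0$. Therefore the disks $a$ and $c$ can never regain tangential contact along the flow, which excludes a frontier point with contact graph $G$ occurring at any positive time.

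Consequently the frontier point reached at $t^*$ must be the remaining alternative from Lemma~\ref{lem:frontcontact}: a tridisk-contained triangulated packing with contact graph $H$, in which the pair $bd$ (that is, $e^+$) is in tangential contact. This establishes that the trajectory hits a positive time at which $e^+$ is in contact, as claimed. The essential content is thus the interplay between the monotone-increase design of the flow (which forecloses the $e^-$-contact frontier) and the exhaustive frontier classification (which leaves $e^+$-contact as the only escape).
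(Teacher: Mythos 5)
Your proposal is correct and follows essentially the same route as the paper's own proof: Lemma~\ref{lem:mustleave} to force an exit, Lemma~\ref{lem:ifleave} to land on a frontier point, Lemma~\ref{lem:frontcontact} to classify it, and the built-in monotone increase of the inversive distance along $e^-$ to exclude the $G$-contact alternative. No gaps to report.
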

\begin{proof}
From Lemma~\ref{lem:mustleave}, the trajectory must leave $S_{G^-}$.
From Lemma~\ref{lem:ifleave}, it must leave at a frontier point.
From Lemma~\ref{lem:frontcontact} this must either be contact along
$e^-$ or $e^+$. But by construction, the inversive distance along $e^-$
is increasing away from $1$, so
we cannot achieve contact along $e^-$.
\end{proof}

\begin{remark}
\label{rem:also2}
This proposition also tells us that every connected
component of $S_{G^-}$ must have a frontier
point $Q$ with contact graph $H$. 
From the uniqueness of ODE solutions, two trajectories cannot cross, and thus
only one component of $S_{G^-}$ can have $Q$ as a frontier point.
By symmetry, every component also must have a frontier 
point $P$ with contact graph $G$, and not other component can hit $P$.

In summary, each connected component of
$S_{G^-}$ must connect one tridisk-contained packing with contact graph  $G$ to one 
tridisk-contained packing 
with contact graph $H$.
And each tridisk-contained packing 
with contact graph $G$ (resp. $H$)
is an endpoint of exactly one component
of $S_{G^-}$.
\end{remark}

We will stop our flow when the edge
$e^+$ is in contact. 
After a time rescaling, we will consider this $t=1$.
We
denote this 
tridisk-contained
packing with contact graph $H$
as $P(1)$. We refer to this trajectory between $t=0$ and $t=1$ as \defn{our flow}.

With Lemma~\ref{lem:fix}, 
Proposition~\ref{prop:drive},  
and our defined flow we have just proven the following
result:
\begin{theorem}
\label{thm:flow}
Let $P$ be a triangulated packing, with contact graph $G$. Let $e^-$
be a flippable edge of $G$. Let $G^-:=G\backslash e^-$, and let $H$ be the graph
resulting from flipping $e^-$. Then there is a continuous path of packings
$P(t)$, with $t \in [0...1]$ such that:
\begin{itemize}
    \item $P(0)=P$.
    \item For $t$ in the interval $(0,1)$, the contact graph of $P(t)$ is $G^-$.
    \item The contact graph of $P(1)$ is $H$.
\end{itemize}
\end{theorem}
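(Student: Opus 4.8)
The plan is to assemble the machinery already developed in the preceding sections into a single flow argument, essentially reading off Theorem~\ref{thm:flow} from the ODE analysis of the packing set $S_{G^-}$. First I would observe that the hypotheses give us a maximal planar graph $G$, a flippable edge $e^-$, the intermediate almost-maximal graph $G^-:=G\setminus e^-$, and the flipped graph $H$. Since $P$ is a triangulated packing with contact graph $G$, I would mark a triangular face $T$ of $G^-$ (note $T$ survives in all three graphs) and, via Lemma~\ref{lem:fix}, apply the unique generalized M\"obius transformation taking $P$ to a tridisk-contained marked packing; by Theorem~\ref{thm:invrMob} inversive distances are preserved, so this normalization loses no generality. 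After pinning the three marked disk centers and radii, the configuration lives in $\RR^{3n-9}$ and we may set up the ODE whose velocity field is $(\p',\r'):=J_p^{-1}[0,\dots,0,1]^{t}$, which is well defined on the Zariski-open set $U$ where $J_p$ is non-singular.

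The core of the argument is then to track a single trajectory of this ODE starting at $P(0)=P$. The key steps, in order, are: (i) by Lemma~\ref{lem:SinU} the starting point $P(0)$ lies in $U$, so the ODE is well posed there; (ii) by Lemma~\ref{lem:start} the trajectory immediately enters the open packing set $S_{G^-}$, so for small positive $t$ the contact graph is exactly $G^-$ (the edge $e^-$ breaks contact while all other contacts of $G^-$ persist); (iii) by Proposition~\ref{prop:drive} the trajectory must eventually reach a frontier point of $S_{G^-}$, and by the construction of the velocity field the inversive distance along $e^-$ only increases away from $1$, so the only admissible frontier event is contact along the cross-edge $e^+$, yielding a packing with contact graph $H$. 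Reparametrizing time so that this contact occurs at $t=1$ gives the desired continuous path with the three stated properties.

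The step I expect to carry the real weight is (iii), the claim that the trajectory genuinely terminates in a $G^-$-to-$H$ frontier point rather than dissipating or hitting some spurious boundary. This is exactly where the earlier structural lemmas must be invoked together: Lemma~\ref{lem:mustleave} forces the trajectory to leave $S_{G^-}$ (either in finite time, via the escape-from-compact-sets dichotomy of Theorem~\ref{thm:ode} combined with boundedness from Lemma~\ref{lem:bounded}, or in infinite time, via the steadily increasing $e^-$ inversive distance against a bounded configuration space); Lemma~\ref{lem:ifleave} guarantees it exits precisely at a frontier point; and Lemma~\ref{lem:frontcontact} classifies every frontier point as a tridisk-contained triangulated packing with contact graph $G$ or $H$. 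The monotone increase of the $e^-$ inversive distance then rules out the $G$-frontier (which would require $e^-$ returning to contact), leaving $H$ as the only possibility. Everything hinges on the non-singularity of $J_p$ throughout $\overline{S_{G^-}}$, established in Lemma~\ref{lem:nonsing3} via the infinitesimal-rigidity Proposition~\ref{prop:nonsing}, so that the flow is smooth, unique, and never stalls along the way.

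\begin{proof}[Proof of Theorem~\ref{thm:flow}]
By Lemma~\ref{lem:fix} we may mark a triangular face $T$ of $G^-$ and transform $P$ into a tridisk-contained marked packing; since Theorem~\ref{thm:invrMob} guarantees inversive distances are preserved, this normalization is harmless. Pinning the three marked disks places the configuration in $\RR^{3n-9}$ and lets us form the ODE with velocity field $(\p',\r'):=J_p^{-1}[0,\dots,0,1]^{t}$ on the set $U$ where $J_p$ is non-singular. By Lemma~\ref{lem:SinU}, $P(0)=P$ lies in $U$, so the trajectory is well defined at the start, and by Lemma~\ref{lem:start} it enters $S_{G^-}$ for small positive $t$, so its contact graph is exactly $G^-$ there. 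By Lemma~\ref{lem:mustleave} the trajectory must leave $S_{G^-}$, by Lemma~\ref{lem:ifleave} it does so at a frontier point, and by Lemma~\ref{lem:frontcontact} that frontier point is a tridisk-contained triangulated packing with contact graph $G$ or $H$. Since the velocity field is constructed so that the inversive distance along $e^-$ strictly increases away from $1$, contact along $e^-$ cannot recur, so the frontier point has contact graph $H$ (Proposition~\ref{prop:drive}). Rescaling time so that this occurs at $t=1$ and setting $P(1)$ to be this packing gives a continuous path with $P(0)=P$, contact graph $G^-$ on $(0,1)$, and contact graph $H$ at $t=1$, as claimed.
\end{proof}
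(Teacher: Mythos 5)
Your proposal is correct and follows the paper's own route essentially verbatim: the paper derives Theorem~\ref{thm:flow} directly from Lemma~\ref{lem:fix}, the ODE construction, and Proposition~\ref{prop:drive} (which itself chains Lemmas~\ref{lem:mustleave}, \ref{lem:ifleave}, and \ref{lem:frontcontact} exactly as you do). The only cosmetic difference is that you spell out the intermediate lemma chain explicitly where the paper compresses it into the single sentence preceding the theorem statement.
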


Now let us consider the inversive distance along 
$e^+$ during this flow.

\begin{lemma}
\label{lem:mono}
The inversive distance along $e^+$
    monotonically decreases, with derivative bounded away from zero
    during the flow from $P(0)$
    to $P(1)$.
\end{lemma}
\begin{proof}
Let $f^+$ be the inversive distance along $e^+$.
  If $f^+$ were to change sign, we must have
  a time during our flow when $df^+/dt=0$. 
  As $(\p',\r')\neq 0$ (the velocity field is never zero), 
  this would 
  have to be a time where the Jacobian of the "dual flip",
  defined using the edge of
  $H$, goes singular. But Lemma~\ref{lem:nonsing3}  can be applied just us well
  to this dual flip,
  so this cannot happen. Thus during our flow,  $f^+$ changes monotonically, with
  non-zero derivative.
  Meanwhile, we know that $f^+$ must overall decrease from its starting
  value (non-contact) down to $1$ (contact). 
  Since we are working over a compact time interval $[0..1]$, this derivative is bounded away from zero.
 \end{proof}

\begin{corollary}
\label{cor:backwards}
If we start with the graph $H$ and remove
the edge $e^+$, and set up a \defn{dual flow} starting from $P(1)$,
increasing the inversive distance of this edge,
then this dual flow must drive $e^-$ to contact  and give us $P(0)$. This dual flow must travel along the reverse path of the original
forward
flow, defined using the graph $G$ and the edge $e^-$.
\end{corollary}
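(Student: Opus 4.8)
The plan is to recognize that both the original (forward) flow and the dual flow are trajectories living on the \emph{same} one-dimensional manifold $S_{G^-}$, and then to compare their directions. The starting observation is that $H\setminus e^+ = G^-$, exactly as $G\setminus e^- = G^-$; hence the dual flow—defined using the edges of $H$ and increasing the inversive distance of $e^+$—holds fixed the inversive distances along all the (unmarked and marked) edges of $G^-$. Just like the forward flow, it therefore preserves all of the equality constraints that cut out $S_{G^-}$, so its velocity field is tangent to $S_{G^-}$ at every point. By Lemma~\ref{lem:mani}, $S_{G^-}$ is a smooth $1$-manifold.

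Next I would argue that the two velocity fields are everywhere parallel. Both are nowhere zero: the forward field by the construction of our flow (using non-singularity of $J_p$ from Lemma~\ref{lem:nonsing3}), and the dual field by the symmetric version of Lemma~\ref{lem:nonsing3} obtained by swapping the roles of $G,H$ and of $e^-,e^+$ (this is exactly the application already invoked in the proof of Lemma~\ref{lem:mono}). Two nowhere-zero tangent vector fields on a $1$-manifold must be scalar multiples of each other at every point, so the forward and dual flows sweep out the same unparameterized curves; on each connected component they agree up to orientation and reparameterization of time.

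To pin down the orientation I would use Lemma~\ref{lem:mono}: along the forward flow the inversive distance $f^+$ along $e^+$ \emph{decreases}, whereas the dual flow \emph{increases} $f^+$ by construction. On any common trajectory the two flows therefore run in opposite directions. Since $P(1)$ is the frontier point of exactly one component $\mathcal{C}$ of $S_{G^-}$—namely the component swept out by the forward flow (Remark~\ref{rem:also2})—the dual flow, launched at $P(1)$, must enter $\mathcal{C}$ and run backward along the very path of the forward flow. Finally, applying Proposition~\ref{prop:drive} in its dual form forces the dual flow to drive $e^-$ into contact, i.e.\ to terminate at the unique $G$-endpoint of $\mathcal{C}$, which is $P(0)$.

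I expect the main obstacle to be the parallelism step: one must be careful that both velocity fields are genuinely tangent to $S_{G^-}$ (which rests on $G^-$ being a common subgraph of both $G$ and $H$, so the two flows freeze the \emph{same} set of inversive-distance constraints) and genuinely nonzero (which rests on applying Lemma~\ref{lem:nonsing3} to the dual configuration). Once tangency and non-vanishing on the $1$-manifold are in hand, the opposite-orientation conclusion and the identification of endpoints follow immediately from Lemma~\ref{lem:mono} and the component bookkeeping in Remark~\ref{rem:also2}.
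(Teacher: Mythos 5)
Your proof is correct, but it identifies the dual trajectory with the reversed forward trajectory by a different mechanism than the paper does. The paper's own proof is a direct appeal to uniqueness of ODE solutions: it observes (via Theorem~\ref{thm:ode}, Lemma~\ref{lem:start} and Proposition~\ref{prop:drive} applied to the dual system) that there is a unique dual trajectory out of $P(1)$, and then uses Lemma~\ref{lem:mono} to check that the time-reversed forward path is itself a solution of the dual ODE after a smooth regular reparameterization (the ``derivative bounded away from zero'' clause of Lemma~\ref{lem:mono} is exactly what makes that reparameterization legitimate); uniqueness then forces the two to coincide. You instead argue geometrically on the phase space: both velocity fields freeze the $G^-$ constraints, hence are tangent to the $1$-manifold $S_{G^-}$ of Lemma~\ref{lem:mani}; both are nowhere zero by Lemma~\ref{lem:nonsing3} and its $H$-dual; so they are pointwise proportional, and Lemma~\ref{lem:mono} fixes the sign as negative. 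Combined with the component bookkeeping of Remark~\ref{rem:also2} and the dual form of Proposition~\ref{prop:drive}, this pins down the endpoints. Your route leans on more of the surrounding machinery (Lemma~\ref{lem:mani}, Remark~\ref{rem:also2}, and implicitly the dual of Lemma~\ref{lem:start} to get the dual trajectory into the correct component in the first place --- worth stating explicitly), but in exchange it makes transparent \emph{why} there is nowhere else for the dual flow to go, and it sidesteps having to verify that the reversed path is literally a solution of the dual ODE up to reparameterization. Both arguments ultimately rest on the same two inputs: the non-singularity of the dual Jacobian and the monotonicity of $f^+$ from Lemma~\ref{lem:mono}.
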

\begin{proof}
We can set up a dual flow starting at $P(1)$ and increasing
the inversive distance on $e^+$. From 
Theorem~\ref{thm:ode}, Lemma~\ref{lem:start} and 
Proposition~\ref{prop:drive} (applied to this dual differential 
equation)
there is also a
\emph{unique} trajectory satisfying this  dual
differential equation.
But from 
Lemma~\ref{lem:mono}, the reverse path from $P(1)$
to $P(0)$, 
satisfies this dual differential equation (up to a 
a smooth regular parameterization of the time variable).
\end{proof}

\section{Proving KAT}

With these pieces all in place, we can now establish all of the
elements of the KAT circle packing theorem.

\begin{theorem}
\label{thm:kat1}
Let $M$ be a maximal planar graph on $n$ vertices. 
Then there exists a
 packing of $n$ disks with contact
graph $M$. 
\end{theorem}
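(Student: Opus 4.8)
The plan is to prove existence by reducing $M$ to a graph for which existence is already known, namely a trilaterated packing, and then transporting a packing back along a sequence of edge flips using the flow machinery (Theorem~\ref{thm:flow}). The key insight is that Theorem~\ref{thm:flow} is reversible: given a packing realizing a maximal planar graph $H$, and given that $H$ is obtained from $G$ by flipping $e^-$, the flow (or its dual, Corollary~\ref{cor:backwards}) produces a packing realizing $G$. So realizability of a contact graph is \emph{preserved} under edge flips in both directions.

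First I would invoke Theorem~\ref{thm:flip}: any two maximal planar graphs on $n$ vertices are connected by a finite sequence of edge flips. In particular, the target graph $M$ is connected to the contact graph $C$ of some trilaterated packing by a finite chain
\begin{eqnarray*}
C = G_0 \to G_1 \to \cdots \to G_k = M,
\end{eqnarray*}
where each $G_{i+1}$ is obtained from $G_i$ by a single edge flip. Here I must be slightly careful: Theorem~\ref{thm:flip} navigates between two \emph{given} graphs on the same indexed vertex set, so I would start the chain from the trilaterated graph $C$ (which is genuinely realized by a trilaterated packing, existence and uniqueness guaranteed by Lemma~\ref{lem:canunq}) and end at $M$.

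Next I would argue by induction along the chain. The base case is that $G_0 = C$ is realized by a packing, which is exactly the construction of a trilaterated packing. For the inductive step, suppose $G_i$ is the contact graph of some packing $P$. Since $G_{i+1}$ is obtained from $G_i$ by flipping an edge $e^-$ of $G_i$, I apply Theorem~\ref{thm:flow} directly with $G = G_i$, $H = G_{i+1}$: starting from $P(0) = P$ and running the flow produces a packing $P(1)$ whose contact graph is exactly $G_{i+1}$. Iterating $k$ times yields a packing with contact graph $M$, completing the induction.

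The main subtlety — and the step I would watch most carefully — is making sure the flow can actually be \emph{initiated} at $P$, i.e. that the hypotheses of Theorem~\ref{thm:flow} are met at each stage. Theorem~\ref{thm:flow} requires $P$ to be a triangulated packing with contact graph $G_i$ and $e^-$ to be flippable in $G_i$; flippability of $e^-$ is exactly what the chain from Theorem~\ref{thm:flip} supplies, and the inductive hypothesis supplies the triangulated packing. One should also note that the flow of Theorem~\ref{thm:flow} begins with a tridisk-contained marked packing, but by Lemma~\ref{lem:fix} any packing with a $3$-connected triangulated contact graph can be moved by a generalized M\"obius transformation into tridisk-contained form (and M\"obius images of packings are packings), so no generality is lost — we may simply normalize $P$ at each step. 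With these observations the induction goes through, and the only genuine work has already been discharged in Theorem~\ref{thm:flow} and Theorem~\ref{thm:flip}; the present theorem is the short capstone that assembles them.
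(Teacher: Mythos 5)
Your proposal is correct and follows essentially the same route as the paper: start from a trilaterated packing, use Theorem~\ref{thm:flip} to obtain a flip sequence from its contact graph to $M$, and at each step normalize to a tridisk-contained marked packing via Lemma~\ref{lem:fix} before invoking the flow (the paper cites Proposition~\ref{prop:drive}, you cite its packaged form Theorem~\ref{thm:flow}, which is the same content). The remark about reversibility in your opening paragraph is not actually needed for this direction, but it does no harm.
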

\begin{proof}
Let $P$ be a trilaterated packing with $n$ disks and with contact graph $C$. 
From Theorem~\ref{thm:flip} we can combinatorially transform
$C$ to $M$ using a finite sequence of edge flips.
Starting with $P$,
we wish to to flow the packing
across each of the edge flips in our sequence.

To this end, let $G$ and $H$ represent two maximal planar graphs
related by flipping some edge $e^-$. 
Let $P$ be a packing with contact graph $G$.
We mark three disks in $P$ corresponding to a triangle in $G^-$.
Using Lemma~\ref{lem:fix},
we apply a M{\"o}bius transformation to $P$ so that the resulting
marked packing is tridisk-contained. Now we can apply
Proposition~\ref{prop:drive} to flow this packing to a 
new tridisk-contained marked packing $Q$ with contact graph $H$.

We can do this for each edge flip in our sequence, 
arriving at 
a packing with contact graph $M$.
\end{proof}

\begin{theorem}
\label{thm:kat2}
Let $M$ be a maximal planar graph on $n$ vertices. 
Then its  packing
is unique up to a generalized M{\"o}bius transformation.
\end{theorem}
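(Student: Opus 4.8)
\section*{Proof proposal for Theorem~\ref{thm:kat2}}

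The plan is to reduce the statement, via Lemma~\ref{lem:fix2}, to a statement about tridisk-contained packings, and then to transport uniqueness from a trilaterated packing to $M$ along a sequence of edge flips, using the flow to set up a bijection across each individual flip. First I would recall that by Lemma~\ref{lem:fix2}, for any maximal planar graph and any choice of triangular face as a marking, the property "there is a unique packing up to a generalized M\"obius transformation" is equivalent to "there is a unique tridisk-contained packing with that marking." Since the left-hand property makes no reference to a marking, the existence of a unique tridisk-contained packing is in fact independent of which triangular face is used as the marking; this freedom lets me pick a convenient marking at each stage of the argument.

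Next, by Theorem~\ref{thm:flip} there is a finite sequence of edge flips carrying the contact graph $C$ of a trilaterated packing to $M$ (both on $n$ vertices). By Lemma~\ref{lem:canunq} the trilaterated packing is the unique packing with graph $C$ up to a generalized M\"obius transformation, so the base of an induction along this sequence is in place. The inductive step I need is: if a maximal planar graph $G$ has a packing that is unique up to a generalized M\"obius transformation, and $H$ is obtained from $G$ by flipping an edge $e^-$ with common almost-maximal graph $G^-$, then $H$ also has a packing that is unique up to a generalized M\"obius transformation.

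For the inductive step I would invoke the correspondence built from the flow. Fix a triangular face $T$ of $G^-$ as marking; since $T$ is a face of $G^-$ it is simultaneously a face of $G = G^-\cup e^-$ and of $H = G^-\cup e^+$, so marking by $T$ is meaningful for both graphs at once. By Remark~\ref{rem:also1} and Remark~\ref{rem:also2} (which rest on Proposition~\ref{prop:drive}, Lemma~\ref{lem:start}, and the uniqueness of ODE trajectories), every connected component of $S_{G^-}$ joins exactly one tridisk-contained marked packing with contact graph $G$ to exactly one tridisk-contained marked packing with contact graph $H$, and every tridisk-contained marked packing with graph $G$ (resp.\ $H$) is a frontier point of exactly one component. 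Hence the set of tridisk-contained packings with marking $T$ and graph $G$ is in bijection with the corresponding set for $H$ (both being in bijection with the set of components of $S_{G^-}$). By the inductive hypothesis together with Lemma~\ref{lem:fix2} the former set is a singleton, so the latter is too, and a second application of Lemma~\ref{lem:fix2} shows that $H$'s packing is unique up to a generalized M\"obius transformation. Iterating along the flip sequence yields uniqueness for $M$.

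The substantive content has been front-loaded into the flow machinery, so the delicate point here is bookkeeping rather than new mathematics: I must ensure that the marking used in the bijection is a genuine shared face of $G$, $G^-$, and $H$, which forces me to choose $T$ among the triangular faces of $G^-$ rather than the quadrilateral, and I must use the marking-independence supplied by Lemma~\ref{lem:fix2} to license changing the marking from one flip to the next, so that no single global marking need be tracked through the entire sequence. The small-vertex cases ($n \le 4$, where no flippable edge exists and $M$ is already trilaterated) are handled directly by Lemma~\ref{lem:canunq}.
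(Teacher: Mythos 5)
Your proposal is correct and follows essentially the same route as the paper: induction along a flip sequence starting from the trilaterated packing (Lemma~\ref{lem:canunq}, Theorem~\ref{thm:flip}), with Lemma~\ref{lem:fix2} translating between M\"obius-uniqueness and marked tridisk-contained uniqueness at each step. The only cosmetic difference is that you carry out the inductive step via the component bijection of Remarks~\ref{rem:also1} and~\ref{rem:also2}, whereas the paper's proof runs the dual and double-dual flows explicitly through Corollary~\ref{cor:backwards}; the paper itself flags Remark~\ref{rem:also2} as the geometric restatement of that same argument.
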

\begin{proof}
We will follow the same sequence of edge flips and flows
of the proof of Theorem~\ref{thm:kat1}.

From Lemma~\ref{lem:canunq}, the trilaterated packing 
is unique up to generalized M{\"o}bius transformations.

Let us look at a single edge flip, flowing  from a tridisk-contained
marked packing
$P$ with contact graph $G$ to 
the  tridisk-contained
packing $Q$  with the same marking and with contact graph $H$.
Let us assume (as an inductive invariant)
 that $P$ was the only 
packing with contact graph $G$ up to a 
generalized M{\"o}bius transformation.  Then from Lemma~\ref{lem:fix2}, 
it is 
the only tridisk-contained packing with this 
contact graph and marking.

Now let $Q'$ be any 
tridisk-contained packing with this marking and contact graph
$H$.
Then from Proposition~\ref{prop:drive}, 
the dual differential equation starting from 
$Q'$  must
flow
to \emph{some} tridisk-contained marked packing $P'$
with contact
graph $G$. 
Setting up the double dual ODE, (which is just our original ODE),
starting from $P'$
and
using
Corollary~\ref{cor:backwards}, the double dual flow must lead us back 
to the marked packing $Q'$. 
But from our assumed marked uniqueness, $P=P'$. And since our double dual ODE is just
our original ODE, then $Q=Q'$ (as marked packings).
Thus from Lemma~\ref{lem:fix2}, $Q$ is the only packing with
contact graph $H$ up to a 
generalized M{\"o}bius transformation.
(See also Remark~\ref{rem:also2} for the geometric
point of view.)

In summary, 
the number of distinct packings, up to 
generalized M{\"o}bius transformations, cannot  increase across
an edge flip, maintaining our inductive invariant, and proving the theorem.
\end{proof}

\begin{remark}
If we fix $G$, but
extend our configuration space 
to allow non-edge disk
pairs to overlap, 
and thus allowing $(G,\p)$ to be
a non-embedding, then all bets (such as 
Lemma~\ref{lem:nonsing3})
are off. Indeed, 
 global uniqueness
no longer holds. (For example, smallest disk in
Figure~\ref{fig:canon} can be changed to be 
identical to the disk labeled $1$ in the figure,
while maintaining all of the necessary 
external tangential contact.)
This leads to especially interesting 
situations when using general inversive distance constraints
on the sphere~\cite{ma}.
\end{remark}

\begin{theorem}
\label{thm:kat3}
Let $G$ be a planar graph on $n$ vertices. 
Then there exists a  packing of $n$ disks with contact
graph $G$.
\end{theorem}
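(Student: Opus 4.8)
The plan is to reduce the general planar case to the maximal planar case that was already handled in Theorem~\ref{thm:kat1}. The key observation is that any planar graph $G$ on $n$ vertices can be realized as a \emph{subgraph} of some maximal planar graph $M$ on the same $n$ vertices: starting from an embedding of $G$, we repeatedly add edges into non-triangular faces until every face is a triangle, which is always possible while keeping the graph planar precisely because we stop only when adding any further edge would destroy planarity. Thus $G \subseteq M$ on the same vertex set.

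First I would invoke Theorem~\ref{thm:kat1} to obtain a packing $(\p,\r)$ whose contact graph is exactly the maximal planar graph $M$. Since $G$ is a subgraph of $M$ on the same vertex set, the same configuration $(\p,\r)$ already has all the tangencies required by the edges of $G$; the only issue is that it has \emph{extra} tangencies coming from the edges in $M \setminus G$, so its contact graph is $M$ rather than $G$. The task is therefore to perturb the configuration so as to separate exactly the disk pairs corresponding to $M \setminus G$ while keeping the tangencies of $G$ intact and maintaining the packing (no interior overlaps).

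The natural way to carry out this separation is to shrink the offending disks slightly. Concretely, for each vertex that is incident to an edge of $M \setminus G$ but whose $G$-edges we wish to preserve, one can decrease its radius while re-centering so that the $G$-tangencies are maintained. A cleaner approach is to choose a single vertex $v$ incident to some edge $e \in M\setminus G$ and shrink disk $v$ radially toward a fixed point on one of its retained tangent circles; this breaks the unwanted contacts at $v$ without introducing new ones, because shrinking a disk can only move its boundary inward and hence can only \emph{remove} tangencies, never create overlaps. Iterating over the finitely many edges of $M\setminus G$ removes all the extra contacts one at a time.

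The step I expect to be the main obstacle is ensuring that shrinking a disk to break one unwanted tangency does not inadvertently break a \emph{wanted} tangency (an edge of $G$) at the same vertex, nor force a later shrink to undo earlier progress. Handling this cleanly requires shrinking each disk toward the contact point of a retained $G$-edge (if one exists) so that at least the tangencies we care about are preserved, and treating isolated vertices of $G$ separately. Because there are only finitely many edges in $M\setminus G$ and each shrink is an open condition that leaves retained contacts undisturbed for a small enough perturbation, a finite sequence of such radius decreases yields a configuration whose interiors remain disjoint and whose contact graph is exactly $G$.
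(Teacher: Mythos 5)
Your reduction to the maximal planar case is exactly the paper's first step: complete $G$ to a maximal planar graph $M$ on the same vertex set and invoke Theorem~\ref{thm:kat1} to get a packing whose contact graph is $M$. The gap is in the separation step. Breaking an unwanted contact $uv \in M\setminus G$ by shrinking and re-centering the single disk $v$ gives you only the three degrees of freedom $(\p_v, r_v)$, while each retained $G$-tangency at $v$ imposes one equality constraint. If $v$ has three or more retained $G$-neighbors --- which happens routinely, e.g.\ when $G$ is $M$ minus one edge and the endpoints of that edge have degree $5$ or more in $M$ --- there is no local motion of disk $v$ alone that preserves all its wanted tangencies, let alone one that additionally separates it from $u$. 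The wanted contacts can only be maintained by simultaneously moving $v$'s neighbors, and then their neighbors, and so on; the separation is inherently a \emph{global} deformation of the entire configuration, not a sequence of single-disk perturbations. Your own ``main obstacle'' paragraph gestures at this but the proposed fix (shrink toward one retained contact point) only handles the case of at most one or two retained edges per vertex.

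This is precisely why the paper does not argue disk-by-disk. Instead it uses the infinitesimal rigidity machinery: by Lemma~\ref{lem:nonsing2} the center-pinned Jacobian $J_c$ of all $3n-6$ inversive distances of $M$ is invertible at the packing $P$, hence in a neighborhood of $P$, so one can prescribe the derivative of \emph{every} edge's inversive distance at once --- strictly positive on the edges of $M\setminus G$ and zero on the edges of $G$ --- and solve $(\p',\r') = J_c^{-1}\f'$ for a velocity of all $n$ disks simultaneously. Integrating this field for a short time separates exactly the unwanted pairs while holding every wanted tangency, and smallness of the time keeps all non-edge pairs disjoint. (The paper also has to handle the possibility that $G$ has no triangle at all, which forces it to unpin the three outer radii and work in $\RR^{3n-6}$; your proposal does not address how the three mutually tangent boundary disks of the packing from Theorem~\ref{thm:kat1} get separated when those tangencies are not edges of $G$.) To repair your argument you would need to replace the iterative shrinking with some such simultaneous deformation, at which point you are reproving the paper's flow step.
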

\begin{proof}
We can always \defn{add} edges  to $G$ 
to obtain a maximal planar graph
$M$. From Theorem~\ref{thm:kat1}, there must be a 
tridisk-contained (using Lemma~\ref{lem:fix})
packing $P$ with contact graph $M$. To remove the extraneous contacts,
all we need to do is
create a short lived flow starting at $P$ that increases the
inversive distances along the added edges, while maintaining
contact along the edges of $G$.

Note that in this setting, we may also need to separate the three
outer disks of $P$ (as there may be no triangles at all in $G$).
To do this we will work in the $\RR^{3n-6}$ dimensional
configuration space, where the three outer radii, but not their 
centers, are variable.

From Lemma~\ref{lem:nonsing2}, the center-pinned Jacobian $J_c$,
defined using the edges of $M$ is non-singular at $P$.
From upper semi-continuity of rank, it remains non-singular
in a sufficiently small neighborhood $U$ of $P$.

We define our velocity field as
$(\p',\r') := J_c^{-1}\f'$. 
Where $f'_{ij}>0$ for the added edges, and
$f'_{ij}=0$ for the edges of $G$.

Finally we integrate this flow from $t =0$
to $t=\epsilon$
for a sufficiently small positive $\epsilon$.
\end{proof}

The  KAT circle packing theorem is simply the union of Theorems
\ref{thm:kat1}, \ref{thm:kat2},
and
\ref{thm:kat3}.

\section{Generalizations}

\subsection{Other Powerful Tools}
\label{sec:guo}
This proof the KAT circle packing theorem is based on establishing
the infinitesimal rigidity of the inversive distances along the edges
of a maximal planar graph $G$, for any $(\p,\r)$ that is 
an almost-triangulated packing with contact graph $G^-$.
We established this in Section~\ref{sec:jacob} using
the equilibrium condition.
In particular, we use the fact that
all but one of the edges of $G$ represent
tangential contact.

Another set of tools for proving such infinitesimal rigidity
results come from the work of Guo~\cite{guo}. Although, he
works in the intrinsic setting of polyhedral metrics, his
result can also be applied to any set of disks $(\p,\r)$,
with any inversive distances in $[0..\infty]$ along the
edges of $G$,
whenever
the extrinsic $\RR^2$ drawing, $(G,\p)$, is a planar embedding.

Now in our setting, we know that $(G^-,\p)$ is a planar embedding,
but we still have one pesky edge $e^-$, which might become out of order during the flow.
And at some time, one of the triangles with this edge can become degenerate.
Luckily, it seems  that 
one can always find a M{\"o}bius transformation (that does not invert any disks)
under which $(G,\p)$ does become an embedding. 
The \emph{existence} of such an embedded drawing should be
enough to argue the non-singularity of $J_p$ for the packing
under consideration.

\subsection{More General Inversive Distance Targets}

The full KAT theorem 
covers
configurations
of disks that are allowed some amount of overlap~\cite{thurston,bowst}.
One fixes each of the inversive distances
along the edges $ij$ of a maximal planar graph
$G$ to some target value $f_{ij}$ in the interval $[0,1]$
(instead of equal to $1$).
This allows the disks across an edge to intersect with some 
fixed angle $\le \pi/2$.
Disks not connected by an edge are still not allowed to overlap.
In this setting, for existence,
one must also add in an assumption about the sums of the overlapping angles
in $3$-cycles that are not triangles in $G$, and also on the sums over $4$-cycles.
It would be interesting to see if we can use our 
flow based approach to start with a  packing with tangential contact along 
the edges of $G$,  and then 
flow the disks together to overlap by the desired amount.
For this, it might be useful to apply the tools of Section~\ref{sec:guo}.

Work starting with Rivin~\cite{rivin} 
has investigated  the situation with 
$f_{ij} \in (-1,1]$, (allowing disk intersections up to $\pi$). 
Existence and uniqueness can be shown  under some additional assumptions. 
Again, it would be interesting to see if we can apply our 
flow based approach to this setting.

If one has a set of disks $(\p,\r)$, with some set of inversive distances
$f_{ij} \in [0,\infty]$ along the edges of a maximal planar graph $G$,
the story gets even more complicated.
If we work in the category of disks where $(G,\p)$ is a planar embedding, then 
infinitesimal rigidity follows from~\cite{guo,xu} and global uniqueness
follows from~\cite{luo,xu} (though see~\cite{ma,bowma}
for non-uniqueness in the spherical embedded 
setting).
But when we place such general
inversive distance targets on the edges of $G$, then existence
conditions 
are not well understood. 

\subsection{Higher Genus}
The KAT theorem also generalizes to higher genus settings~\cite{thurston}.
We conjecture that given a graph $G$ that triangulates the torus, and
a packing on a flat torus with this contact graph, we can flip an edge,
and smoothly flow the packing across this flip (this necessarily requires us
to also flow the shape of the torus). For example, in his book, "Regular Figures" \cite{LFT}
L\'aszl\'o Fejes T\'{o}th takes the periodic packing on the left in Figure \ref{fig:torus-flip}, which is triangulated, and continuously deforms the packing.
This is a flip-and-flow motion as we have described in this paper, 
applied to the edge between the blue and yellow regions,   
but now on a torus.
This flow changes the 
radii as well as the density of the packing.  If one continues the motion, the packing eventually moves to the standard packing with all radii the same as in the right of the Figure.

We would also be interested in seeing if our method extends to surfaces of higher genus in the hyperbolic setting.

\begin{figure}[h]
    \centering
    \includegraphics[width=0.8\textwidth]{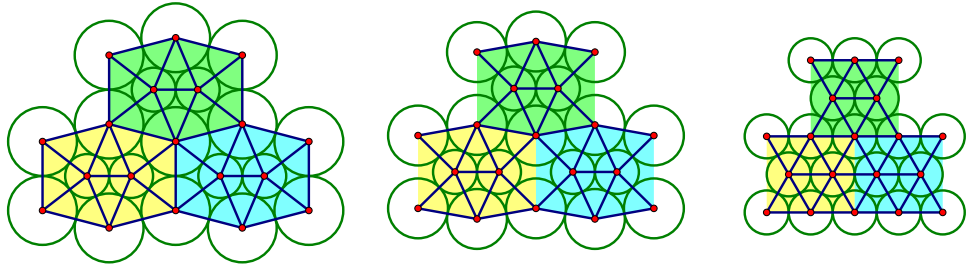}
    \caption{This shows a continuous flip of an edge of a triangulated packing of a torus as used by L\'aszl\'o Fejes T\'{o}th.  The packing on the left is the start of the continuous motion of the packing.  The middle packing was used because the relative ratio of the packing radii were closer to one, and the packing on the right is the standard triangulated packing with all the radii the same.  Three fundamental regions are shown, each in a different color.}
    \label{fig:torus-flip}
\end{figure}

\subsection*{Acknowledgements}
The authors would like to thank Louis Theran for many
helpful discussions along the way.

\bibliographystyle{abbrvnat}
\bibliography{flow}
\end{document}